\documentclass{article}
\pdfoutput=1
\usepackage{amsmath}
\usepackage{amsfonts}
\usepackage{amssymb}
\usepackage{graphicx}%
\setcounter{MaxMatrixCols}{30}

\usepackage{tikz}
\usetikzlibrary{decorations.markings}
\usetikzlibrary{arrows}

\newtheorem{theorem}{Theorem}

\newtheorem{corollary}[theorem]{Corollary}

\newtheorem{definition}[theorem]{Definition}

\newtheorem{lemma}[theorem]{Lemma}

\newtheorem{proposition}[theorem]{Proposition}

\newenvironment{proof}[1][Proof]{\noindent\textbf{#1.} }{\ \rule{0.5em}{0.5em}}

\begin{document}

\title{Link mutations and Goeritz matrices}
\author{Lorenzo Traldi\\Lafayette College\\Easton Pennsylvania 18042}
\date{ }
\maketitle

\begin{abstract}
Extending theorems of J.\ E.\ Greene [Invent.\ Math.\ \textbf{192} (2013), 717-750] and A.\ S.\ Lipson [Enseign.\ Math.\ (2) \textbf{36} (1990), 93-114], we prove that the equivalence class of a classical link $L$ under mutation is determined by Goeritz matrices associated to diagrams of $L$. 

AMS Subject Classification: 57M25

Keywords: Goeritz matrix, mutation, link

\end{abstract}

\date{}

\section{Introduction}
A (classical, unoriented) \emph{link} is a union $L=K_{1}\cup \dots \cup K_{\mu(L) }$ of
pairwise disjoint, piecewise smooth closed curves $K_i$ in $\mathbb{R}^{3}$. Each $K_{i}$
is a \emph{knot}, and these knots are the \emph{components} of $L$; $\mu(L)$ is the \emph{component number} of $L$. Two
links are \emph{ambient isotopic} or \emph{of the same type} if there is a piecewise smooth, orientation-preserving
autohomeomorphism of $\mathbb{R}^{3}$, which maps one link to the other. The ambient isotopy class of a link is a \emph{link type}. It is common to blur the distinction between individual links and link types, but we are careful about this distinction in the present paper. We use $[L]$ to denote the link type of $L$. 

An \emph{oriented link} is given with preferred orientations of its components, and an ambient isotopy between two oriented links must respect these orientations. Most of our discussion involves unoriented links, which we usually call ``links''; when we discuss oriented links, we always include the adjective ``oriented.''

If $P$ is a plane and $L$ is a link contained in one of the components of $\mathbb{R}^3-P$ then $L$ may be described using a \emph{diagram} in $P$. A diagram is obtained in two steps. First $L$ is projected into $P$; the only allowable singularities of the projection are a finite number of transverse double points, called \emph{crossings}. Second, at each crossing the undercrossing strand (the strand closer to $P$) is indicated by removing two very short segments, one on each side of the projected image of the crossing. A fundamental theorem of classical knot theory is that ambient isotopy is completely described by three simple moves on diagrams, the Reidemeister moves~\cite{R}.

In this paper we are concerned with two equivalence relations on links and two similar equivalence relations on oriented links; the relations are all coarser than ambient isotopy. Two of these equivalence relations involve a matrix associated to a link diagram, which has been studied by many researchers since it was discussed by Goeritz almost 100 years ago~\cite{G}. Despite the attention of these researchers, the precise knot-theoretic significance of the Goeritz matrix has not been established. That is to say, we do not know how to determine whether or not two link types have diagrams with the same Goeritz matrix.

We proceed to define the Goeritz matrix. If $D$ is a link diagram then let $U$ be the \emph{universe graph} of $D$, i.e., the 4-regular plane graph obtained by restoring the short
segments that were removed to indicate the undercrossing strands. The connected components of the complement of $U$ may be colored checkerboard fashion, so that each edge of $U$ has a shaded region on one side and an unshaded region on the other side. Given a choice $\sigma$ of one of the two checkerboard shadings of the complementary regions of $U$, each crossing of $D$ is assigned a \emph{Goeritz index} as in Figure \ref{figone}: $\eta =1$ if an observer standing in an unshaded region and facing the crossing sees the overpassing arc on her or his right, and $\eta =-1$ if an observer standing in an unshaded region and facing the crossing sees the overpassing arc on her or his left.

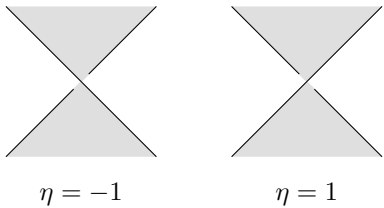
\begin{figure}
\centering
\begin{tikzpicture} 
\draw [thick] [white, fill=lightgray!50] (-2.5,1) -- (-1.5,0) -- (-.5,1);
\draw [thick] [white, fill=lightgray!50] (-2.5,-1) -- (-1.5,0) -- (-1/2,-1);
\draw [thick] [white, fill=lightgray!50] (2.5,1) -- (3/2,0) -- (.5,1);
\draw [thick] [white, fill=lightgray!50] (5/2,-1) -- (3/2,0) -- (1/2,-1);
\draw [thick] (-2.5,1) -- (-1/2,-1);
\draw [thick] (-2.5,-1) -- (-1.6,-.1);
\draw [thick] (-1.4,.1) -- (-1/2,1);
\draw [thick] (2.5,1) -- (1/2,-1);
\draw [thick] (2.5,-1) -- (1.6,-.1);
\draw [thick] (1.4,.1) -- (1/2,1);
\node at (-3/2,-3/2) {$\eta=-1$};
\node at (3/2,-3/2) {$\eta=1$};
\end{tikzpicture}
\caption{The Goeritz index of a crossing.}
\label{figone}
\end{figure}

\begin{definition}
\label{goeritzmat}Let $D$ be a link diagram, and $\sigma$ either of the two
checkerboard shadings of the complementary regions of $D$. Let $R_{1},\dots ,R_{n}$ be the
unshaded complementary regions according to $\sigma$, and for $i,j\in \{1,\dots
,n\}$ let $C_{ij}$ be the set of crossings of $D$ incident on $R_{i}$ and $R_{j}$. Then the (unreduced) \emph{Goeritz matrix} of $D$ with respect to $\sigma$ is the $n\times n$ matrix $G(D,\sigma)$ with these entries: 
\[
G(D,\sigma)_{ij}=%
\begin{cases}
-\sum\limits_{c\in C_{ij}}\eta(c)\text{,} & \text{if }i\neq j\\
-\sum\limits_{k\neq i}G(D,\sigma)_{ik}\text{,} & \text{if }i=j
\end{cases}
\]
\end{definition}

In order to avoid complicating our discussion with trivialities involving the permuting of rows and columns, we formally view a Goeritz matrix as a function $\mathcal{R}_u(D,\sigma) \times \mathcal{R}_u(D,\sigma) \to \mathbb{Z}$, where $\mathcal{R}_u(D,\sigma)$ is the set of unshaded complementary regions according to $\sigma$. That is, we formally view $G(D,\sigma)$ as a matrix whose rows and columns are not ordered. Of course the rows and columns of $G(D,\sigma)$ must be ordered when we display an example of a Goeritz matrix; but the order is not a significant part of the theory.

It is conventional to remove the $n^{th}$ row and column from $G(D,\sigma)$, to obtain a \emph{reduced} Goeritz matrix. As $G(D,\sigma)$ is a symmetric matrix whose rows and columns sum to $0$, one does not gain or lose any information by removing or retaining the $n^{th}$ row and column. We prefer to work with unreduced matrices for the same reason we prefer to formally view the rows and columns of a Goeritz matrix as unordered: we avoid complicating our notation with inconsequential technicalities regarding the arbitrary choice of an ordering of the complementary regions.

It is also conventional to define $G(D,\sigma)$ using a specified shading of $D$, either the shading $\sigma_s$ according to which the unbounded region is shaded, or the shading $\sigma_u$ according to which the unbounded region is unshaded. Again, we prefer not to do this here; and again, our preference is motivated simply by the desire to avoid inconsequential technicalities. In Section 2 we prove Proposition \ref{noshade}, which asserts that even though the choice of a preferred shading convention will certainly alter the Goeritz matrix (or matrices) associated with an individual link diagram, the choice will not alter the set of Goeritz matrices associated with an entire link type. As link types are the focus of knot theory -- not individual link diagrams -- Proposition \ref{noshade} tells us that the choice of a shading convention is inconsequential.

\begin{proposition}
\label{noshade}
Given a link type $[L]$, let
\begin{eqnarray*}
&&\mathcal{G}([L])=\{G(D,\sigma) \mid D\text{ is a diagram of a link of type }[L] \}\text{,}
\\
&&\mathcal{G}_s([L])=\{G(D,\sigma_s) \mid D\text{ is a diagram of a link of type }[L] \}\text{, and }
\\
&&\mathcal{G}_u([L])=\{G(D,\sigma_u) \mid D\text{ is a diagram of a link of type }[L] \}\text{.}
\end{eqnarray*}
Then $\mathcal{G}([L])=\mathcal{G}_s([L]) = \mathcal{G}_u([L])$.
\end{proposition}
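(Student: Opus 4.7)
The containments $\mathcal{G}_s([L]) \cup \mathcal{G}_u([L]) \subseteq \mathcal{G}([L])$ are immediate from the definitions, since $\sigma_s$ and $\sigma_u$ are permissible choices of the unrestricted shading $\sigma$. For the reverse containments, my plan is to exploit the observation that a link diagram really lives on the sphere $\hat P = P \cup \{\infty\}$, on which no complementary region is intrinsically distinguished as unbounded. The whole argument then reduces to the following single claim: given a diagram $D$ with shading $\sigma$ and any complementary region $R$ of the universe graph of $D$, one can produce a diagram $D'$ of a link of type $[L]$ in which $R$ has been moved to play the role of the unbounded region, carrying $\sigma$ to a shading $\sigma'$ of $D'$ such that $G(D',\sigma')=G(D,\sigma)$.

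To construct $D'$, I select a point $p$ in the interior of $R$ together with an orientation-preserving self-homeomorphism $h$ of $\hat P$ with $h(p)=\infty$, and set $D'=h(D)$ and $\sigma'=h_{*}\sigma$. Any such $h$ extends to an orientation-preserving self-homeomorphism of $S^{3}=\mathbb{R}^{3}\cup\{\infty\}$ fixing $\infty$ (for example, thicken $\hat P$ to $\hat P\times[-1,1]$, act by $h\times\mathrm{id}$, and extend radially over the two complementary balls), which restricts to an autohomeomorphism of $\mathbb{R}^{3}$ sending $L$ to a link of type $[L]$ with diagram $D'$. Because $h$ is an orientation-preserving bijection of spheres, it induces a bijection between the unshaded regions of $(D,\sigma)$ and those of $(D',\sigma')$ and a bijection between the crossings of $D$ and of $D'$; it also preserves both the checkerboard pattern and the Goeritz index at each crossing, since that index is determined only by the local over/under configuration together with the shading and the orientation of $\hat P$. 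Viewing Goeritz matrices as functions on unordered region sets (as the paper suggests), this means $G(D',\sigma')=G(D,\sigma)$.

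The proposition now follows directly: if the unbounded region of $(D,\sigma)$ is already shaded (respectively unshaded), then $G(D,\sigma)\in\mathcal{G}_{s}([L])$ (respectively $\mathcal{G}_{u}([L])$) trivially, and applying the construction above with $R$ chosen from a region of the opposite color shows that $G(D,\sigma)$ also lies in the other set. I do not expect a serious obstacle here; the only real work is conceptual, namely to decouple the combinatorial data defining $G(D,\sigma)$ (crossings, incidences with unshaded regions, Goeritz indices) from the arbitrary choice of a distinguished unbounded region, and to verify that the sphere-level homeomorphism $h$ transports all of this local data correctly.
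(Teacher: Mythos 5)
Your argument is correct in substance, but it takes a genuinely different route from the paper. The paper never leaves the plane: it gives an explicit diagrammatic ``construction'' (elongating a short arc incident on the unbounded region and swinging it under the diagram, as in Figure~\ref{figthree}) which realizes, by an explicit ambient isotopy, a diagram $D'$ of the same type whose unbounded region corresponds to a chosen bounded region adjacent to the old one; iterating this gives Proposition~\ref{infpt}, with explicit bijections $f,g$ on regions and crossings that the paper reuses later. You instead pass to the sphere $\hat P$ and transport the whole picture by an orientation-preserving self-homeomorphism $h$ of $\hat P$ extended to $S^3$. That is cleaner and more conceptual, and it handles all regions at once rather than one adjacency at a time; the price is that you must invoke the (standard but nontrivial) correspondence between link equivalence in $S^3$ and ambient isotopy in $\mathbb{R}^3$, since the paper's definition of link type demands an autohomeomorphism of $\mathbb{R}^3$. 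On that point there is a small slip: your extension $H$ of $h$ cannot fix $\infty$, because $\infty$ lies on $\hat P$ and $H|_{\hat P}=h$ sends $p\neq\infty$ to $\infty$; so $H$ is not literally an autohomeomorphism of $\mathbb{R}^3$. This is easily repaired (e.g.\ compose with a further homeomorphism of $S^3$ supported away from $H(L)$ that returns $H(\infty)$ to $\infty$, or simply cite the equivalence of $\mathbb{R}^3$- and $S^3$-link types), but as written the sentence claiming $H$ fixes $\infty$ is false. The rest of your verification -- that $h$ preserves crossings, shadings, incidences, and Goeritz indices, hence the matrix viewed as a function on unordered region sets -- is exactly what is needed.
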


At the time the Goeritz matrix was introduced, knot theorists paid little attention to links of more than one component. (This historical focus on one-component links is reflected in the fact that researchers who study links are called ``knot theorists.'') In particular, it is clear that Goeritz did not think about links of more than one component while writing \cite{G}. If he had thought about links, he might have realized that his matrix ignores a simple operation: if we change a link diagram by inserting a small, crossing-free circle into an unshaded region then we have certainly changed the link type, but the corresponding Goeritz matrix is not affected. To allow for this simple operation, we define an adjusted Goeritz matrix in Definition \ref{goeritzmat2} below. This adjustment requires another well-known idea.

\begin{definition}
\label{betadef}Let $D$ be a link diagram, and $\sigma$ either of the two checkerboard shadings of the complementary regions of $D$. Let $\Gamma_s(D,\sigma)$ denote the \emph{shaded checkerboard graph} of $D$. That is, $\Gamma_s(D,\sigma)$ is a graph with a vertex corresponding to each shaded region of $D$ and an edge corresponding to each crossing of $D$, with the edge corresponding to a crossing incident on the vertex or vertices corresponding to the shaded region(s) incident at that crossing. The number of connected components of $\Gamma_s(D,\sigma)$ is denoted $\beta_s(D,\sigma)$.
\end{definition}

The notations $\Gamma_u(D,\sigma),\beta_u(D,\sigma)$ are defined in the same way, using the complementary regions of $D$ that are unshaded according to $\sigma$. (Equivalently, if $\sigma'$ is the other shading of $D$ then $\Gamma_u(D,\sigma)=\Gamma_s(D,\sigma')$ and $\beta_u(D,\sigma)=\beta_s(D,\sigma')$.) The checkerboard graphs of $D$ are often called \emph{Tait graphs} in the literature. 

\begin{definition}
\label{goeritzmat2}Let $\sigma$ be one of the two checkerboard shadings of the complementary regions of a link diagram $D$, and let $B(D,\sigma)$ be the $(\beta_s(D,\sigma)-1) \times (\beta_s(D,\sigma)-1)$ matrix whose entries are all $0$. Then the \emph{adjusted} Goeritz matrix of $D$ with respect to $\sigma$ is 
\[
G^{adj}(D,\sigma)=
\begin{pmatrix}
G(D,\sigma) & 0 \\ 
0 & B(D,\sigma)
\end{pmatrix}.
\]
\end{definition}

Notice that $G^{adj}(D,\sigma)=G(D,\sigma)$ if and only if $B(D,\sigma)$ is empty, i.e., if and only if $\beta_s(D,\sigma)=1$. In particular, $G^{adj}(D,\sigma)=G(D,\sigma)$ if $D$ is a knot diagram. Notice also that adjusted Goeritz matrices succeed in detecting the simple link operation mentioned above: if $D'$ is obtained from $D$ by inserting a small, crossing-free circle into $D$, then $G^{adj}(D',\sigma)$ is obtained from $G^{adj}(D,\sigma)$ by adjoining a row and column of zeroes. $G^{adj}(D',\sigma)$ is the same whether the new component is inserted into a shaded region or an unshaded region.

As before, our preference for not adopting a fixed shading convention is inconsequential, in that the set of adjusted Goeritz matrices associated with a link type would not be affected if we were to choose one convention or the other.

\begin{proposition}
\label{noshadeadj}
Given a link type $[L]$, let
\begin{align*}
&\mathcal{G}^{adj}([L])=\{G^{adj}(D,\sigma) \mid D\text{ is a diagram of a link of type }[L] \}\text{,}
\\
&\mathcal{G}^{adj}_s([L])=\{G^{adj}(D,\sigma_s) \mid D\text{ is a diagram of a link of type }[L] \}\text{, and }
\\
&\mathcal{G}^{adj}_u([L])=\{G^{adj}(D,\sigma_u) \mid D\text{ is a diagram of a link of type }[L] \}\text{.}
\end{align*}
Then $\mathcal{G}^{adj}([L])=\mathcal{G}^{adj}_s([L]) = \mathcal{G}^{adj}_u([L])$.
\end{proposition}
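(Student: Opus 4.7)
The plan is to reduce Proposition~\ref{noshadeadj} to Proposition~\ref{noshade} by verifying that the same diagrammatic move used to exchange $\sigma_s$ and $\sigma_u$ in the unadjusted case automatically preserves the quantity $\beta_s(D,\sigma)$, and hence the zero block $B(D,\sigma)$. The inclusions $\mathcal{G}^{adj}_s([L]), \mathcal{G}^{adj}_u([L]) \subseteq \mathcal{G}^{adj}([L])$ are immediate from the definitions, so it suffices to show that every $G^{adj}(D,\sigma) \in \mathcal{G}^{adj}([L])$ belongs to both $\mathcal{G}^{adj}_s([L])$ and $\mathcal{G}^{adj}_u([L])$.

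My approach would be to compactify $D$ to $S^{2}$, and apply an autohomeomorphism of $S^{2}$ that carries the point at infinity from its current complementary region into a region of the opposite checkerboard color. Decompactifying yields a planar diagram $D'$ of the same link type, whose inherited shading $\sigma'$ obeys the opposite shading convention to $\sigma$; this is essentially the same construction (or a variant of it) that underlies Proposition~\ref{noshade}. The key observation is that the passage from $D$ to $D'$ is merely a choice of a different planar model for the same spherical diagram, so it induces a canonical bijection between complementary regions that preserves color classes and all incidences with crossings. From this I would conclude both $G(D,\sigma) = G(D',\sigma')$ and $\Gamma_s(D,\sigma) \cong \Gamma_s(D',\sigma')$ as abstract graphs; the latter gives $\beta_s(D,\sigma) = \beta_s(D',\sigma')$ and hence $B(D,\sigma) = B(D',\sigma')$, so $G^{adj}(D,\sigma) = G^{adj}(D',\sigma')$ as required.

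I expect the main obstacle to be purely bookkeeping: one must check that the shading-swap construction is actually available for every $D$ (the only potential exception being a hypothetical diagram all of whose complementary regions lie in a single $\tau$-color class, which is excluded as soon as $D$ contains any closed curve), and one must verify carefully that the induced region bijection really does preserve both the Goeritz entries and the checkerboard graph structure. Beyond these routine checks, no ideas beyond those used in the proof of Proposition~\ref{noshade} should be required.
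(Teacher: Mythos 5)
Your proposal is correct and is essentially the paper's own argument: the paper realizes your ``move the point at infinity across an arc'' step concretely, by elongating an arc incident on the unbounded region and swinging it under the diagram, and then notes exactly the same region/crossing bijection preserving Goeritz entries and the shaded checkerboard graph (hence $\beta_s$ and the zero block). No substantive difference.
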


The second equivalence relation we discuss involves \emph{mutation}, part of the theory of tangles introduced by Conway~\cite{Con}. (We refer the reader to~\cite{M} for a detailed presentation of this important notion.) An elementary mutation is performed on a link diagram by first finding a tangle, i.e., a portion of the diagram that is attached to the rest of the diagram by only four arcs, as indicated on the left in Figure \ref{figtwo}; cutting the tangle from the rest of the diagram; rotating the tangle through an angle of $\pi$ in one of the three ways indicated in Figure \ref{figtwo}; and then attaching the rotated tangle in place of the original. (N.b. Only the third kind of elementary mutation involves a rotation in the plane of the diagram.) A mutation is not an ambient isotopy, in general, because the tangle is cut off from the rest of the diagram before rotating it.

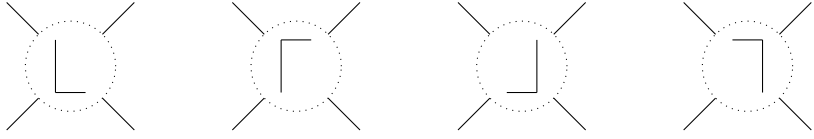
\begin{figure} [bht]
\centering
\begin{tikzpicture}
\draw [thick] [dotted] (-4.5,0) circle (.6 cm);
%.3 times sqrt(2) is .424264
\draw [thick] (-4.5+.424264,.424264) -- (-4.075736+.424264,.424264+.424264);
\draw [thick] (-4.5-.424264,.424264) -- (-4.5-.424264-.424264,.424264+.424264);
\draw [thick] (-4.5-.424264,-.424264) -- (-4.5-.424264-.424264,-.424264-.424264);
\draw [thick] (-4.5+.424264,-.424264) -- (-4.5+.424264+.424264,-.424264-.424264);
\draw [thick] (-4.7,-.35) -- (-4.7,.35);
\draw [thick] (-4.7,-.35) -- (-4.3,-.35);
\draw [thick] [dotted] (-1.5,0) circle (.6 cm);
\draw [thick] (-1.5+.424264,.424264) -- (-1.075736+.424264,.424264+.424264);
\draw [thick] (-1.5-.424264,.424264) -- (-1.5-.424264-.424264,.424264+.424264);
\draw [thick] (-1.5-.424264,-.424264) -- (-1.5-.424264-.424264,-.424264-.424264);
\draw [thick] (-1.5+.424264,-.424264) -- (-1.5+.424264+.424264,-.424264-.424264);
\draw [thick](-1.7,-.35) -- (-1.7,.35);
\draw [thick] (-1.7,.35) -- (-1.3,.35);
\draw  [thick] [dotted] (1.5,0) circle (.6 cm);
\draw [thick] (6-4.5+.424264,.424264) -- (6-4.075736+.424264,.424264+.424264);
\draw [thick] (6-4.5-.424264,.424264) -- (6-4.5-.424264-.424264,.424264+.424264);
\draw [thick] (6-4.5-.424264,-.424264) -- (6-4.5-.424264-.424264,-.424264-.424264);
\draw [thick] (6-4.5+.424264,-.424264) -- (6-4.5+.424264+.424264,-.424264-.424264);
\draw [thick] (3.4-1.7,-.35) -- (3.4-1.7,.35);
\draw [thick] (3-1.7,-.35) -- (3-1.3,-.35);
\draw [thick] [dotted] (4.5,0) circle (.6 cm);
\draw [thick] (9-4.5+.424264,.424264) -- (9-4.075736+.424264,.424264+.424264);
\draw [thick] (9-4.5-.424264,.424264) -- (9-4.5-.424264-.424264,.424264+.424264);
\draw [thick] (9-4.5-.424264,-.424264) -- (9-4.5-.424264-.424264,-.424264-.424264);
\draw [thick] (9-4.5+.424264,-.424264) -- (9-4.5+.424264+.424264,-.424264-.424264);
\draw [thick] (6.4-1.7,-.35) -- (6.4-1.7,.35);
\draw [thick] (6-1.7,.35) -- (6-1.3,.35);
\end{tikzpicture}
\caption{An elementary mutation of the first, second or third kind replaces the first tangle with the second, third or fourth tangle (respectively).}
\label{figtwo}
\end{figure}

We use adjusted Goeritz matrices and elementary mutations to define two equivalence relations on links, in a standard way.

\begin{definition}
\label{mutant}Two links $L$ and $L^{\prime }$ are \emph{Conway equivalent}, denoted $L\sim_{C} L'$, if there is a finite sequence $L=L_{1},\dots ,L_{k}=L^{\prime }$ of links such that for each $i\in \{2,\dots ,k\}$, at least one of these conditions holds:
\begin{itemize} 
\item $[L_{i-1}]=[L_{i}]$, i.e., $L_{i-1}$ is ambient isotopic to $L_{i}$.
\item $L_{i-1}$ and $L_{i}$ have diagrams $D_{i-1}$ and $D_{i}$ such that $D_{i-1}$ is related to $D_{i}$ by an elementary mutation.
\end{itemize}
\end{definition}

\begin{definition}
\label{goeritzrel}
Two links $L$ and $L^{\prime }$ are \emph{Goeritz equivalent}, denoted $L\sim_{G} L'$, if there is a finite sequence $L=L_{1},\dots ,L_{k}=L^{\prime }$
of links such that for each $i\in \{2,\dots ,k\}$, at least one of these conditions holds:
\begin{itemize} 
\item $[L_{i-1}]=[L_{i}]$, i.e., $L_{i-1}$ is ambient isotopic to $L_{i}$.
\item $L_{i-1}$ and $L_{i}$ have diagrams $D_{i-1}$ and $D_{i}$ with shadings $\sigma_{i-1}$ and $\sigma_{i}$ such that $G^{adj}(D_{i-1},\sigma_{i-1})=G^{adj}(D_{i},\sigma_{i})$.
\end{itemize}
\end{definition}

As a slight abuse of terminology, we will also say that two link diagrams $D$ and $D'$ are Conway equivalent (or Goeritz equivalent) if they represent links $L$ and $L'$ which satisfy Definition \ref{mutant} (or Definition \ref{goeritzrel}). 

The fact that elementary mutations are defined without referring to checkerboard shadings suggests that mutations are not connected with Goeritz matrices or Tait graphs. In Sections 2 -- 7 we prove the opposite:

\begin{theorem}
\label{main}
Conway equivalence and Goeritz equivalence are the same.
\end{theorem}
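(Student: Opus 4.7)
The plan is to verify the two inclusions $\sim_C\subseteq \sim_G$ and $\sim_G\subseteq \sim_C$ separately, since both equivalences are defined as the transitive closure of ambient isotopy together with a second family of generating moves. The first inclusion reduces to a direct local verification: for each of the three elementary mutations shown in Figure \ref{figtwo}, if $D'$ is obtained from $D$ by the mutation and $\sigma$ is a shading of $D$, I would exhibit a shading $\sigma'$ of $D'$ and a bijection between the unshaded regions of $(D,\sigma)$ and $(D',\sigma')$ under which the crossings inside the tangle disk $\Delta$ (and their Goeritz indices) correspond. Because a mutation permutes the four external arcs and rearranges the regions inside $\Delta$ in a controlled way, the bijection can be arranged so that $G(D,\sigma)=G(D',\sigma')$; a parallel argument handles the disconnection count $\beta_s$, so the adjustment block matches as well. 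Propositions \ref{noshade} and \ref{noshadeadj} free the argument from any particular choice of shading convention.

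For the converse $\sim_G\subseteq \sim_C$, the strategy is to translate equality of adjusted Goeritz matrices into a combinatorial equivalence of the associated checkerboard (Tait) graphs, and then to realize each elementary step of that graph equivalence by an elementary mutation or ambient isotopy of the link diagram. Recall that $G(D,\sigma)$ is the signed Laplacian of the planar graph whose vertices are the unshaded regions and whose signed edges are the crossings, and that the adjustment block records the number of connected components of the dual (shaded) Tait graph. Equality of signed Laplacians of two such planar signed graphs should force a chain of Whitney-style moves, pivots at cut vertices and $2$-flips at $2$-vertex cuts, plus some sign-matching steps. A $2$-flip across a $2$-vertex cut of the Tait graph is precisely the shadow of turning over a Conway tangle: this identification is the bridge across which combinatorial moves on the graph side become Conway mutations on the link side. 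The cited theorems of Greene and Lipson supply the base cases where no nontrivial Whitney moves are available and the Goeritz form already pins down the link type up to mutation.

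The main obstacle will be rigorously matching signed Whitney moves on the Tait graph with Conway mutations on the diagram. Issues include checking that each of the reflective and rotational variants of a $2$-flip lands in one of the three families of elementary mutations (with any discrepancy absorbed into ambient isotopy or an accompanying shading swap); ensuring that the two-strand tangle picked out by a $2$-vertex cut of the graph can be isotoped into a standard tangle disk; and coordinating these moves so that the adjustment block $B(D,\sigma)$ continues to behave correctly when the dual Tait graph splits or merges. Once these local dictionaries are set up in Sections 2--7, the theorem will follow by iterating the moves until both diagrams are brought to a common normal form.
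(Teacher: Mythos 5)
Your first inclusion is essentially the paper's argument, but one detail as you state it is wrong: an elementary mutation of the third kind does \emph{not} in general preserve either Goeritz matrix, because both checkerboard graphs undergo Whitney twists. Only the first two kinds admit a shading for which $G(D,\sigma)=G(D',\sigma')$ (with the adjustment block controlled by the fact that a Whitney twist of the shaded Tait graph preserves its number of connected components); the third kind must be handled by writing it as a composite of a mutation of the first kind followed by one of the second kind, which suffices because Goeritz equivalence is a transitive closure.

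The serious gap is in the converse. You pass directly from equality of adjusted Goeritz matrices to ``a combinatorial equivalence of the associated Tait graphs,'' but the Goeritz matrix does not determine the unshaded Tait graph: its off-diagonal entries are signed sums over the crossings shared by two regions, so crossings of opposite sign between the same pair of regions cancel, crossings at which a single unshaded region is incident twice contribute nothing, and the matrix cannot distinguish whether several same-sign crossings between two regions sit on one two-stranded braid or are scattered around the diagram. Before any graph-theoretic argument can start, each diagram must be brought -- by Conway equivalences that preserve the adjusted Goeritz matrix -- to a normal form in which none of these degeneracies occurs. This is the content of the paper's strongly normal shadings (Definition \ref{strongnormaldiagram}) and Proposition \ref{goeritzprop}, whose proof needs specific moves ($\Omega.2$ cancellation of opposite-sign crossings, $\Omega.4$ unwinding of self-incident crossings, $\Omega.5$ followed by a mutation to gather parallel crossings into a single braid) together with a device for making the shaded Tait graph connected. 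Only after this normalization does the adjusted Goeritz matrix faithfully encode a weighted planar simple graph (Theorem \ref{corr}), and only then does the classification of planar embeddings of a fixed graph (local isotopy, moving components, swinging at a cut vertex, flipping across a two-vertex cut) translate into ambient isotopies and mutations as you describe. Note also that after normalization the two diagrams yield the \emph{same} abstract weighted graph, so what remains is comparing two embeddings of one graph, not matching two different signed graphs with equal Laplacians. Finally, your appeal to Greene and Lipson as ``base cases'' is not how the argument closes: the paper proves the full statement directly, and their theorems are consequences of it rather than inputs to it.
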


Considering the importance of invariants in knot theory, it is worth taking a moment to state an obvious consequence of Theorem~\ref{main}:

\begin{corollary}
\label{maincor}
Let $\mathcal{I}$ be an invariant of classical, unoriented link types. Then $\mathcal{I}$ is invariant under mutation if and only if for every choice of a shading $\sigma$ of a diagram $D$ of a link $L$, $\mathcal{I}([L])$ is determined in some way by the adjusted Goeritz matrix $G^{adj}(D,\sigma)$.
\end{corollary}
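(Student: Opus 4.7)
The plan is to derive the corollary directly from Theorem~\ref{main}; the entire content of the argument is already encoded in the equivalence of Conway and Goeritz relations, so I would organize the proof as two short implications flanking a brief unpacking of the phrase ``determined by $G^{adj}(D,\sigma)$.'' That phrase means there is a function $f$ on adjusted Goeritz matrices such that $\mathcal{I}([L]) = f(G^{adj}(D,\sigma))$ for every diagram $D$ and shading $\sigma$ of every link $L$; equivalently, whenever $G^{adj}(D,\sigma) = G^{adj}(D',\sigma')$ for diagrams $D, D'$ of links $L, L'$, one must have $\mathcal{I}([L]) = \mathcal{I}([L'])$.

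For the forward direction, I would assume $\mathcal{I}$ is mutation invariant and suppose $G^{adj}(D,\sigma) = G^{adj}(D',\sigma')$ for diagrams of links $L$ and $L'$. By Definition~\ref{goeritzrel} the two links are Goeritz equivalent, and by Theorem~\ref{main} they are Conway equivalent. Walking along the chain witnessing Conway equivalence, each step is either an ambient isotopy or an elementary mutation, and $\mathcal{I}$ is preserved in both cases -- the first because $\mathcal{I}$ is a link type invariant, the second by hypothesis. Hence $\mathcal{I}([L]) = \mathcal{I}([L'])$, and the common value defines the required function $f$ on the matrices in $\mathcal{G}^{adj}([L])$.

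The converse runs symmetrically. Assuming $\mathcal{I}([L])$ is determined by the adjusted Goeritz matrix, suppose $L$ and $L'$ differ by a single elementary mutation, so $L \sim_C L'$. Theorem~\ref{main} then gives $L \sim_G L'$, and each step in the resulting Goeritz chain preserves $\mathcal{I}$: either because $\mathcal{I}$ is a link type invariant (on isotopy steps) or because it factors through $G^{adj}$ (on equal-matrix steps, by the hypothesis on $\mathcal{I}$). Chaining these equalities gives $\mathcal{I}([L]) = \mathcal{I}([L'])$, so $\mathcal{I}$ is invariant under mutation.

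There is no real obstacle once Theorem~\ref{main} is granted; all of the substance lies in that theorem, and the corollary is essentially a restatement of it in the language of link invariants. The only minor wrinkle is the ``for every choice of a shading $\sigma$'' quantifier in the statement, but this is handled automatically because Definition~\ref{goeritzrel} already permits the shadings $\sigma_{i-1}, \sigma_i$ of successive diagrams in a Goeritz chain to be chosen independently.
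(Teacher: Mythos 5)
Your proof is correct and follows exactly the route the paper intends: the corollary is stated there as an immediate consequence of Theorem~\ref{main}, with no separate argument given, and your two implications (unpacking ``determined by $G^{adj}(D,\sigma)$'' as factoring through equality of matrices, then walking along the Conway chain in one direction and the Goeritz chain in the other) supply precisely the routine verification the paper leaves to the reader.
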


In Section \ref{sec:or} we adapt the ideas discussed above to oriented links. First we define an oriented version of the Goeritz matrix, $G_{or}(D,\sigma)$, which incorporates the writhe along with the Goeritz index. Then we adapt the arguments of Sections 2 -- 7 to prove oriented versions of Theorem~\ref{main} and Corollary~\ref{maincor}. 

In Sections \ref{sec:cor2} and \ref{sec:cforms} we mention examples of knots that illustrate two important properties of Goeritz matrices. 1. It is possible for two distinct knot types to share some, but not all, of their (adjusted) Goeritz matrices. It follows that in general, a single (adjusted) Goeritz matrix of a link $L$ does not provide enough information to determine all the (adjusted) Goeritz matrices of type $[L]$. This property contrasts with the fact that a single diagram of $L$ does provide enough information to determine all the diagrams of type $[L]$, through Reidemeister moves. 2. It is possible for the Goeritz matrices of two knots to represent equivalent quadratic forms, even if the two knots are not mutants and hence cannot share a Goeritz matrix. This property indicates that even though some Reidemeister moves have the effect of replacing a Goeritz matrix with a conjugate matrix (a matrix representing the same quadratic form), an (adjusted) Goeritz matrix of a link provides strictly more information than the quadratic form the matrix represents.

In the last section of the paper we mention some questions suggested by our results.

%\begin{corollary}
%\label{maincor2}
%In general, a single (adjusted) Goeritz matrix associated to a link type $[L]$ does not provide enough information to determine the other (adjusted) Goeritz matrices associated to $[L]$.
%\end{corollary}

%In Section \ref{sec:cforms} we contrast Theorem~\ref{main} with the classical theory of Goeritz matrices, which focused on the quadratic forms defined by the matrices. Examples show that Goeritz equivalence provides a strictly finer  equivalence relation on link types than equivalence of the associated quadratic forms. In the last section of the paper we mention some problems suggested by our results.

Before proceeding we should mention that the ideas presented here were inspired by two earlier theorems. The first is Lipson's theorem that for connected link diagrams, a special form of the Kauffman polynomial is determined by the ordinary (unadjusted) Goeritz matrix~\cite{Lip}. The second is a theorem of Greene~\cite{Gre}, which implies that reduced, alternating link diagrams are characterized up to mutation by their ordinary Goeritz matrices. Our results extend these earlier theorems to arbitrary link diagrams, arbitrary link invariants, and oriented links.

We are grateful to D. A. Smith and W. Watkins, for many hours spent discussing Laplacian matrices of signed graphs and their connections with duality in the plane, links, and quadratic forms. We are also grateful to D. S. Silver and S. G. Williams, for an enlightening correspondence regarding the Tait graphs of link diagrams.

\section{(Not) choosing a shading}
\label{sec:shade}

Recall that if $D$ is a link diagram, $\sigma_{s}$ (resp. $\sigma_{u}$) denotes the shading of complementary regions in which the unbounded region is shaded (resp. unshaded). Definition \ref{goeritzmat} provides two Goeritz matrices, $G(D,\sigma_{s})$ and $G(D,\sigma_{u})$, and Definition \ref{goeritzmat2} provides two adjusted Goeritz matrices, $G^{adj}(D,\sigma_{s})$ and $G^{adj}(D,\sigma_{u})$. Notice that for every crossing of $D$, the Goeritz index is $+1$ with respect to one of the shadings, and $-1$ with respect to the other.

As mentioned in the introduction, it is traditional to adopt a definite shading convention for Goeritz matrices. Propositions \ref{noshade} and \ref{noshadeadj} assert that this tradition is not important when working with Goeritz equivalence. To verify these propositions, consider the following. 

\textbf{Construction}: Given a link diagram $D$, let $D'$ be a diagram obtained by taking a short segment of an arc of $D$ incident on the unbounded complementary region, elongating it into a curve that encloses a large empty portion of the unbounded region, and then swinging the elongated curve under the diagram and back up into the plane of $D$, so that the elongated curve now encloses the rest of $D$. (See Figure~\ref{figthree} for an example.)

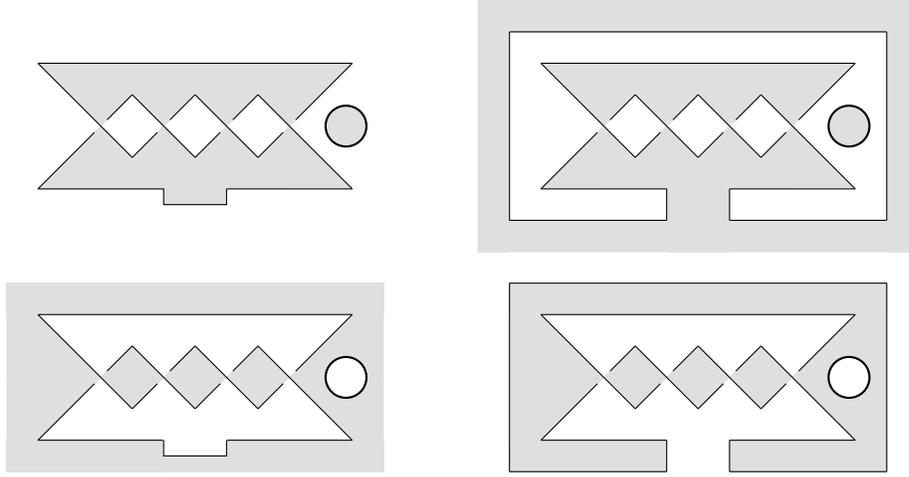
\begin{figure} 
\centering
\begin{tikzpicture} [scale=.0418]
%main picture
%shade left top
\draw [lightgray!50, fill=lightgray!50] (-130,100) -- (-110,80) -- (-90,100);
\draw [lightgray!50, fill=lightgray!50] (-110,100) -- (-90,80) -- (-70,100);
\draw [lightgray!50, fill=lightgray!50] (-90,100) -- (-70,80) -- (-50,100);
\draw [lightgray!50, fill=lightgray!50] (-70,100) -- (-50,80) -- (-30,100);
\draw [lightgray!50, fill=lightgray!50] (-130,60) -- (-110,80) -- (-90,60);
\draw [lightgray!50, fill=lightgray!50] (-110,60) -- (-90,80) -- (-70,60);
\draw [lightgray!50, fill=lightgray!50] (-90,60) -- (-70,80) -- (-50,60);
\draw [lightgray!50, fill=lightgray!50] (-70,60) -- (-50,80) -- (-30,60);
%shade right top
\draw [lightgray!50, fill=lightgray!50] (30,100) -- (50,80) -- (70,100);
\draw [lightgray!50, fill=lightgray!50] (50,100) -- (70,80) -- (90,100);
\draw [lightgray!50, fill=lightgray!50] (70,100) -- (90,80) -- (110,100);
\draw [lightgray!50, fill=lightgray!50] (90,100) -- (110,80) -- (130,100);
\draw [lightgray!50, fill=lightgray!50] (30,60) -- (50,80) -- (70,60);
\draw [lightgray!50, fill=lightgray!50] (50,60) -- (70,80) -- (90,60);
\draw [lightgray!50, fill=lightgray!50] (70,60) -- (90,80) -- (110,60);
\draw [lightgray!50, fill=lightgray!50] (90,60) -- (110,80) -- (130,60);
\draw [lightgray!50, fill=lightgray!50] (70,60) -- (70,40) -- (90,40) -- (90,60);
\draw [lightgray!50, fill=lightgray!50] (10,40) -- (150,40) -- (150,50) -- (10,50);
\draw [lightgray!50, fill=lightgray!50] (10,120) -- (150,120) -- (150,110) -- (10,110);
\draw [lightgray!50, fill=lightgray!50] (140,120) -- (150,120) -- (150,40) -- (140,40);
\draw [lightgray!50, fill=lightgray!50] (20,120) -- (10,120) -- (10,40) -- (20,40);
%shade right bottom
\draw [lightgray!50, fill=lightgray!50] (60,-10) -- (70,0) -- (60,10) -- (50,0);
\draw [lightgray!50, fill=lightgray!50] (80,-10) -- (90,0) -- (80,10) -- (70,0);
\draw [lightgray!50, fill=lightgray!50] (100,-10) -- (110,0) -- (100,10) -- (90,0);
\draw [lightgray!50, fill=lightgray!50] (140,20) -- (140,30) -- (20,30) -- (20,20);
\draw [lightgray!50, fill=lightgray!50] (20,30) -- (50,0) -- (20,0);
\draw [lightgray!50, fill=lightgray!50] (20,0) -- (50,0) -- (20,-30);
\draw [lightgray!50, fill=lightgray!50] (140,30) -- (110,0) -- (140,0);
\draw [lightgray!50, fill=lightgray!50] (140,-30) -- (110,0) -- (140,0);
\draw [lightgray!50, fill=lightgray!50] (140,-20) -- (140,-30) -- (90,-30) -- (90,-20);
\draw [lightgray!50, fill=lightgray!50] (70,-20) -- (70,-30) -- (20,-30) -- (20,-20);
%shade left bottom
\draw [lightgray!50, fill=lightgray!50] (-100,-10) -- (-90,0) -- (-100,10) -- (-110,0);
\draw [lightgray!50, fill=lightgray!50] (-80,-10) -- (-70,0) -- (-80,10) -- (-90,0);
\draw [lightgray!50, fill=lightgray!50] (-60,-10) -- (-50,0) -- (-60,10) -- (-70,0);
\draw [lightgray!50, fill=lightgray!50] (-20,20) -- (-20,30) -- (-140,30) -- (-140,20);
\draw [lightgray!50, fill=lightgray!50] (-140,30) -- (-110,0) -- (-140,0);
\draw [lightgray!50, fill=lightgray!50] (-140,0) -- (-110,0) -- (-140,-30);
\draw [lightgray!50, fill=lightgray!50] (-20,30) -- (-50,0) -- (-20,0);
\draw [lightgray!50, fill=lightgray!50] (-20,-30) -- (-50,0) -- (-20,0);
\draw [lightgray!50, fill=lightgray!50] (-20,-20) -- (-20,-30) -- (-140,-30) -- (-140,-20);
\draw [lightgray!50, fill=lightgray!50] (-90,-20) -- (-90,-30) -- (-140,-30) -- (-140,-20);
%right hand bottom picture
\draw [thick] (20,-30) -- (70,-30);
\draw [thick] (90,-30) -- (140,-30);
\draw [thick] (70,-30) -- (70,-20);
\draw [thick] (90,-30) -- (90,-20);
\draw [thick] (30,-20) -- (70,-20);
\draw [thick] (90,-20) -- (130,-20);
\draw [thick] (20,30) -- (140,30);
\draw [thick] (140,-30) -- (140,30);
\draw [thick] (20,-30) -- (20,30);
\draw [thick] (30,20) -- (130,20);
%underpassing segments
\draw [thick] (30,-20) -- (48,-2);
\draw [thick] (52,2) -- (60,10);
\draw [thick] (60,-10) -- (68,-2);
\draw [thick] (72,2) -- (80,10);
\draw [thick] (80,-10) -- (88,-2);
\draw [thick] (92,2) -- (100,10);
\draw [thick] (100,-10) -- (108,-2);
\draw [thick] (112,2) -- (130,20);
%overpassing segments
\draw [thick] (30,20) -- (60,-10);
\draw [thick] (60,10) -- (80,-10);
\draw [thick] (80,10) -- (100,-10);
\draw [thick] (100,10) -- (130,-20);
%right hand top picture
\draw [thick] (20,50) -- (70,50);
\draw [thick] (90,50) -- (140,50);
\draw [thick] (70,50) -- (70,60);
\draw [thick] (90,50) -- (90,60);
\draw [thick] (30,60) -- (70,60);
\draw [thick] (90,60) -- (130,60);
\draw [thick] (20,110) -- (140,110);
\draw [thick] (140,50) -- (140,110);
\draw [thick] (20,50) -- (20,110);
\draw [thick] (30,100) -- (130,100);
%underpassing segments
\draw [thick] (30,60) -- (48,78);
\draw [thick] (52,82) -- (60,90);
\draw [thick] (60,70) -- (68,78);
\draw [thick] (72,82) -- (80,90);
\draw [thick] (80,70) -- (88,78);
\draw [thick] (92,82) -- (100,90);
\draw [thick] (100,70) -- (108,78);
\draw [thick] (112,82) -- (130,100);
%overpassing segments
\draw [thick] (30,100) -- (60,70);
\draw [thick] (60,90) -- (80,70);
\draw [thick] (80,90) -- (100,70);
\draw [thick] (100,90) -- (130,60);
%left hand bottom picture
\draw [thick] (-130,-20) -- (-90,-20);
\draw [thick] (-70,-20) -- (-30,-20);
\draw [white, fill=white] (-70,-15) -- (-70,-25) -- (-90,-25) -- (-90,-15);
\draw [thick] (-70,-20) -- (-70,-25) -- (-90,-25) -- (-90,-20);
\draw [thick] (-130,20) -- (-30,20);
%underpassing segments
\draw [thick] (-130,-20) -- (-112,-2);
\draw [thick] (-108,2) -- (-100,10);
\draw [thick] (-100,-10) -- (-92,-2);
\draw [thick] (-88,2) -- (-80,10);
\draw [thick] (-80,-10) -- (-72,-2);
\draw [thick] (-68,2) -- (-60,10);
\draw [thick] (-60,-10) -- (-52,-2);
\draw [thick] (-48,2) -- (-30,20);
%overpassing segments
\draw [thick] (-130,20) -- (-100,-10);
\draw [thick] (-100,10) -- (-80,-10);
\draw [thick] (-80,10) -- (-60,-10);
\draw [thick] (-60,10) -- (-30,-20);
%left hand top picture
\draw [lightgray!50, fill=lightgray!50] (-130,60) -- (-90,60) -- (-90,55) -- (-70,55) -- (-70,60);
\draw [thick] (-130,60) -- (-90,60) -- (-90,55) -- (-70,55) -- (-70,60);
\draw [thick] (-30,60) -- (-70,60);
\draw [thick] (-130,100) -- (-30,100);
%underpassing segments
\draw [thick] (-130,60) -- (-112,78);
\draw [thick] (-108,82) -- (-100,90);
\draw [thick] (-100,70) -- (-92,78);
\draw [thick] (-88,82) -- (-80,90);
\draw [thick] (-80,70) -- (-72,78);
\draw [thick] (-68,82) -- (-60,90);
\draw [thick] (-60,70) -- (-52,78);
\draw [thick] (-48,82) -- (-30,100);
%overpassing segments
\draw [thick] (-130,100) -- (-100,70);
\draw [thick] (-100,90) -- (-80,70);
\draw [thick] (-80,90) -- (-60,70);
\draw [thick] (-60,90) -- (-30,60);
%extra circles
\draw [thick, fill=lightgray!50] (-32,80) circle (6.5 cm);
\draw [thick, fill=white] (-32,0) circle (6.5 cm);
\draw [thick, fill=lightgray!50] (128,80) circle (6.5 cm);
\draw [thick, fill=white] (128,0) circle (6.5 cm);
\end{tikzpicture}
\caption{The two shadings of $D$ on the left have the same (adjusted) Goeritz matrices as the two shadings of $D'$ on the right.}
\label{figthree}
\end{figure}

It is easy to see that the construction yields a diagram $D^{\prime }$ of the same link type as $D$, whose crossings and regions correspond directly to those of $D$. In particular, the unbounded region of $D'$ corresponds to the bounded region of $D$ incident on the short segment used by the construction. It follows that the two shadings $\sigma_{s}^{\prime },\sigma_{u}^{\prime }$ of $D'$ have $G(D,\sigma_{s})=G(D^{\prime},\sigma_{u}^{\prime })$, $G(D,\sigma_{u})=G(D^{\prime },\sigma_{s}^{\prime })$, $G^{adj}(D,\sigma_{s})=G^{adj}(D^{\prime},\sigma_{u}^{\prime })$ and $G^{adj}(D,\sigma_{u})=G^{adj}(D^{\prime },\sigma_{s}^{\prime })$. These equalities verify Propositions \ref{noshade} and \ref{noshadeadj}.

The following inductive generalization of Propositions \ref{noshade} and \ref{noshadeadj} is proven by iterating the construction.

\begin{proposition}
\label{infpt}
Let $R$ be any one of the complementary regions of a link diagram $D$. Then there are a link diagram $D'$ of the same link type as $D$ and a pair of bijections $f:\{ $complementary regions of $D\} \to \{$complementary regions of $D' \}$ and $g:\{ $crossings of $D\} \to \{$crossings of $D' \}$, with the following properties:
\begin{enumerate}
\item The unbounded complementary region of $D'$ is $f(R)$.
\item Let $\sigma_1,\sigma_2$ be the two shadings of $D$, with $R$ shaded in $\sigma_1$. Let $\sigma'_s,\sigma'_u$ be the two shadings of $D'$, with the unbounded region shaded in $\sigma'_s$. Then for each crossing $c$ of $D$, the Goeritz signs of $c$ with respect to $\sigma_1,\sigma_2$ are the same as the Goeritz signs of $g(c)$ with respect to $\sigma'_s,\sigma'_u$ (respectively).

\item If the regions incident on a crossing $c$ of $D$ are $R_1,R_2,R_3$ and $R_4$ then the regions incident on $g(c)$ are $f(R_1),f(R_2),f(R_3)$ and $f(R_4)$.

\item The bijections $f,g$ define graph isomorphisms $\Gamma_u(D,\sigma_1) \cong \Gamma_u(D',\sigma'_s)$ and $\Gamma_u(D,\sigma_2) \cong \Gamma_u(D',\sigma'_u)$.

\item The bijection $f$ defines matrix equalities $G(D,\sigma_{1})=G(D^{\prime},\sigma_{s}^{\prime })$, $G(D,\sigma_{2})=G(D^{\prime },\sigma_{u}^{\prime })$, $G^{adj}(D,\sigma_{1})=G^{adj}(D^{\prime},\sigma_{s}^{\prime })$ and $G^{adj}(D,\sigma_{2})=G^{adj}(D^{\prime },\sigma_{u}^{\prime })$.
\end{enumerate}
\end{proposition}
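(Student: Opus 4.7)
The approach is to iterate the Construction from just above the proposition statement. A single application of the Construction takes a bounded region $R$ of $D$ that shares an arc with the current unbounded region and produces a diagram $D_1$ in which $R$ has become the unbounded region, while preserving the rest of the combinatorial picture (crossings, region-crossing incidences, local over/under data, and hence Goeritz indices). Since the dual graph whose vertices are the complementary regions of $D$ and whose edges correspond to arcs of the universe graph is connected, any region can be moved to infinity by a finite sequence of such steps.

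First I would prove the one-step version as a lemma: if $R$ is adjacent to the unbounded region across an arc, then applying the Construction along a short segment of that arc yields $D_1$ with natural bijections $f_1$ and $g_1$ coming from the ambient isotopy of $\mathbb{R}^3$ that realizes the Construction, and these satisfy properties (1)--(5). Properties (1)--(4) are immediate because the Construction is a localized deformation: away from the swung segment nothing changes, the swung arc itself contains no crossings, and no region-crossing incidences are altered. Property (5) follows from Definitions \ref{goeritzmat} and \ref{goeritzmat2} once one matches the shading conventions: the shading $\sigma_1$ of $D$ in which $R$ is shaded corresponds to $\sigma'_s$ on $D_1$, because $f_1(R)$ is the unbounded region of $D_1$; the other shadings correspond as well.

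Then I would induct on the length $k$ of a shortest path in the dual graph from $R$ to the initially-unbounded region. The base case $k=0$ is trivial (take $D'=D$ with identity bijections). For $k \geq 1$, choose a neighbor $R'$ of the unbounded region lying on a shortest path to $R$, apply the one-step lemma to promote $R'$ to infinity (yielding $D_1$, $f_1$, $g_1$), and use induction on $D_1$ with target $f_1(R)$, which is at distance $k-1$ from the unbounded region of $D_1$. Setting $f = f_2 \circ f_1$ and $g = g_2 \circ g_1$ produces the required data for $D$ and $R$, and each of properties (1)--(5) is preserved under this composition. The main bookkeeping obstacle is tracking shadings through the iteration: at each stage one must carry along the pair of shadings in which the image of $R$ is (respectively, is not) shaded, so that after all $k$ steps they match $\sigma'_s$ and $\sigma'_u$ on $D'$. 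With this convention the one-step equalities chain together to give the full matrix equalities in property (5).
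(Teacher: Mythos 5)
Your proposal is correct and follows essentially the same route as the paper, which proves Proposition \ref{infpt} simply by iterating the Construction of Section 2 (illustrated with a two-step example in Figure \ref{figfour}). Your added detail --- the induction on distance in the region-adjacency graph and the explicit bookkeeping of shadings by whether the image of $R$ is shaded rather than by the $\sigma_s/\sigma_u$ labels --- is a sound and slightly more careful write-up of the same argument.
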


See Figure \ref{figfour} for an example obtained by using the construction twice. The bijection $f$ is indicated by the region labels 1 -- 7.

\begin{figure} [bht]
\centering
\begin{tikzpicture} [scale=.043]
%shadeleft
%left picture
\draw [thick] (-130,-20) -- (-30,-20);
\draw [thick] (-130,20) -- (-30,20);
%left underpassing segments
\draw [thick] (-60,10) -- (-52,2);
\draw [thick] (-30,-20) -- (-48,-2);
\draw [thick] (-130,-20) -- (-112,-2);
\draw [thick] (-108,2) -- (-100,10);
\draw [thick] (-100,-10) -- (-92,-2);
\draw [thick] (-88,2) -- (-80,10);
\draw [thick] (-80,10) -- (-72,2);
\draw [thick] (-68,-2) -- (-60,-10);
%left overpassing segments
\draw [thick] (-60,-10) -- (-30,20);
\draw [thick] (-130,20) -- (-100,-10);
\draw [thick] (-100,10) -- (-80,-10);
\draw [thick] (-60,10) -- (-80,-10);
%right picture
\draw [thick] (160-130,-20) -- (160-55,-20);
\draw [white, fill=white] (160-44,-19) -- (160-56,-19) -- (160-44,-31) -- (160-34,-31);
\draw [white, fill=white] (160-31,-19) -- (160-41,-19) -- (160-29,-31) -- (160-19,-31);
%\draw (160-30,-20) -- (160-20,-30);
\draw [thick] (160-10,-40) -- (160-18,-40);
\draw [thick] (160-45,-30) -- (160-55,-20);
\draw [thick] (160-130,20) -- (160-30,20);
\draw [thick] (160-10,-40) -- (160-10,40);
\draw [thick] (160-150,40) -- (160-10,40);
\draw [thick] (160-150,40) -- (160-150,-40);
\draw [thick] (160-26,-40) -- (160-150,-40);
\draw [thick] (160-45,-30) -- (160-140,-30);
\draw [thick] (160-140,30) -- (160-140,-30);
\draw [thick] (160-140,30) -- (160-20,30);
\draw [thick] (160-20,-30) -- (160-20,30);
%right underpassing segments
\draw [thick] (160-20,-30) -- (160-48,-2);
\draw [thick] (160-60,10) -- (160-52,2);
\draw [thick] (160-130,-20) -- (160-112,-2);
\draw [thick] (160-108,2) -- (160-100,10);
\draw [thick] (160-100,-10) -- (160-92,-2);
\draw [thick] (160-88,2) -- (160-80,10);
\draw [thick] (160-68,-2) -- (160-60,-10);
\draw [thick] (160-80,10) -- (160-72,2);
\draw [thick] (160-60,-10) -- (160-58,-8);
%\draw (160-54,-4) -- (160-52,-2);
\draw [thick] (160-26,-40) -- (160-58,-8);
\draw [thick] (160-18,-40) -- (160-54,-4);
%right overpassing segments
\draw [thick] (160-80,-10) -- (160-60,10);
\draw [thick] (160-54,-4) -- (160-30,20);
\draw [thick] (160-130,20) -- (160-100,-10);
\draw [thick] (160-100,10) -- (160-80,-10);
%circles
\draw [thick, fill=white] (-32,0) circle (6.5 cm);
\draw [thick, fill=white] (160-32,0) circle (6.5 cm);
%region labels
\node at (-125,0) {1};
\node at (-110,12) {2};
\node at (-100,0) {3};
\node at (-80,0) {4};
\node at (-60,0) {6};
\node at (-70,-12) {5};
\node at (-32,0) {7};
\node at (160-125,0) {1};
\node at (160-110,12) {2};
\node at (160-100,0) {3};
\node at (160-80,0) {4};
\node at (160-60,0) {6};
\node at (160-70,-12) {5};
\node at (160-32,0) {7};
\end{tikzpicture}
\caption{$G(D,\sigma_{s})=G(D^{\prime},\sigma_{s}^{\prime })$, $G(D,\sigma_{u})=G(D^{\prime },\sigma_{u}^{\prime })$, $G^{adj}(D,\sigma_{s})=G^{adj}(D^{\prime},\sigma_{s}^{\prime })$ and  $G^{adj}(D,\sigma_{u})=G^{adj}(D^{\prime },\sigma_{u}^{\prime })$. The unbounded regions of $D$ and $D'$ do not correspond to each other, however.}
\label{figfour}
\end{figure}
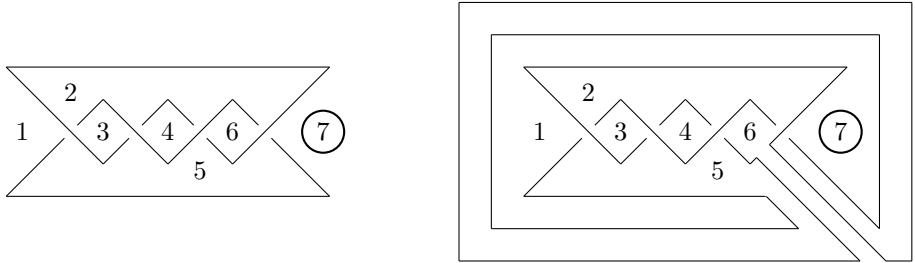

Even though Propositions \ref{noshade} and \ref{noshadeadj} assert that it makes no difference whether or not we adopt a fixed shading convention, we have found that the idea of allowing both shadings of a link diagram makes it much easier to understand the properties of Goeritz matrices. As far as we know, this useful idea is due to Nanyes~\cite{N}.

\section{Conway equivalence $\implies$ Goeritz equivalence}
\label{sec:cg}

In this section we prove the less difficult direction of Theorem \ref{main}. 

It is easy to see that if $D$ and $D'$ are related through the first kind of elementary mutation and $\sigma$ and $\sigma'$ are the shadings indicated in Figure~\ref{figfive}, then the unshaded checkerboard graphs $\Gamma_u(D,\sigma)$ and $\Gamma_u(D',\sigma')$ are isomorphic; they differ only in the way they are drawn in the plane. It is also easy to see that the tangle rotation does not change the Goeritz indices of crossings, so $G(D,\sigma)=G(D',\sigma')$. 
 
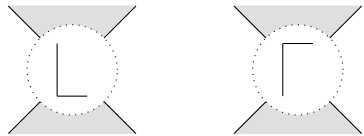
\begin{figure} [bht]
\centering
\begin{tikzpicture}
%shading
\draw [lightgray!50, fill=lightgray!50] (-4.075736+.424264,.424264+.424264) -- (-4.5-.424264-.424264,.424264+.424264) -- (-4.5,0);
\draw [lightgray!50, fill=lightgray!50] (-4.075736+.424264,-.424264-.424264) -- (-4.5-.424264-.424264,-.424264-.424264) -- (-4.5,0);
\draw [lightgray!50, fill=lightgray!50] (-1.075736+.424264,.424264+.424264) -- (-1.5-.424264-.424264,.424264+.424264) -- (-1.5,0);
\draw [lightgray!50, fill=lightgray!50] (-1.075736+.424264,-.424264-.424264) -- (-1.5-.424264-.424264,-.424264-.424264) -- (-1.5,0);
\draw [thick] [dotted] [fill=white] (-4.5,0) circle (.6 cm);
%.3 times sqrt(2) is .424264
\draw [thick] (-4.5+.424264,.424264) -- (-4.075736+.424264,.424264+.424264);
\draw [thick] (-4.5-.424264,.424264) -- (-4.5-.424264-.424264,.424264+.424264);
\draw [thick] (-4.5-.424264,-.424264) -- (-4.5-.424264-.424264,-.424264-.424264);
\draw [thick] (-4.5+.424264,-.424264) -- (-4.5+.424264+.424264,-.424264-.424264);
\draw [thick] (-4.7,-.35) -- (-4.7,.35);
\draw [thick] (-4.7,-.35) -- (-4.3,-.35);
\draw [thick] [dotted] (-1.5,0) [fill=white] circle (.6 cm);
\draw [thick] (-1.5+.424264,.424264) -- (-1.075736+.424264,.424264+.424264);
\draw [thick] (-1.5-.424264,.424264) -- (-1.5-.424264-.424264,.424264+.424264);
\draw [thick] (-1.5-.424264,-.424264) -- (-1.5-.424264-.424264,-.424264-.424264);
\draw [thick] (-1.5+.424264,-.424264) -- (-1.5+.424264+.424264,-.424264-.424264);
\draw [thick] (-1.7,-.35) -- (-1.7,.35);
\draw [thick] (-1.7,.35) -- (-1.3,.35);
\end{tikzpicture}
\caption{An elementary mutation of the first kind preserves one $G^{adj}(D,\sigma)$ matrix.}
\label{figfive}
\end{figure}

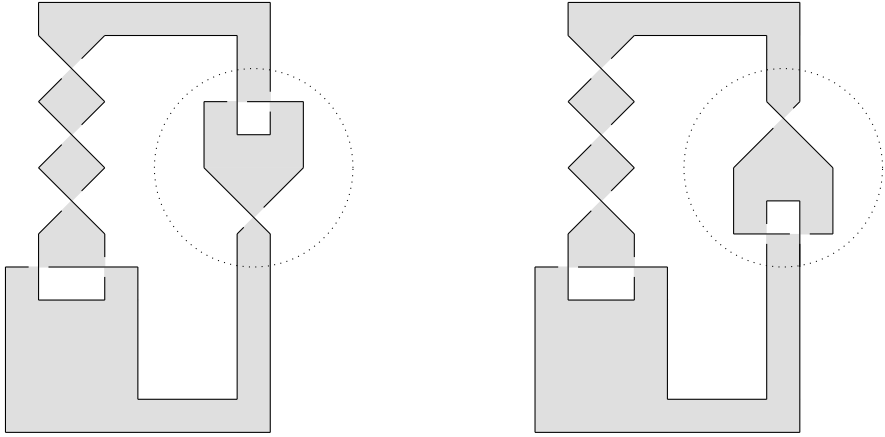
\begin{figure} [bht]
\centering
\begin{tikzpicture} [scale=0.44]
%leftleftshading
\draw [lightgray!50, fill=lightgray!50] (3,13) -- (3,12) -- (1,12) -- (1,13);
\draw [lightgray!50, fill=lightgray!50] (3,12) -- (1,12) -- (2,11);
\draw [lightgray!50, fill=lightgray!50] (3,10) -- (2,9) -- (1,10) -- (2,11);
\draw [lightgray!50, fill=lightgray!50] (3,8) -- (2,7) -- (1,8) -- (2,9);
\draw [lightgray!50, fill=lightgray!50] (3,6) -- (3,5) -- (1,5) -- (1,6) -- (2,7);
\draw [lightgray!50, fill=lightgray!50] (3,4) -- (3,5) -- (4,5) -- (4,4);
\draw [lightgray!50, fill=lightgray!50] (0,4) -- (0,5) -- (1,5) -- (1,4);
\draw [lightgray!50, fill=lightgray!50] (0,4) -- (4,4) -- (2,2);
%\draw [lightgray!50, fill=lightgray!50] (0,0) -- (4,0) -- (2,2);
\draw [lightgray!50, fill=lightgray!50] (0,0) -- (4,0) -- (4,4) -- (0,4);
%rightleftshading
\draw [lightgray!50, fill=lightgray!50] (7,6) -- (7.5,6.5) -- (8,6);
\draw [lightgray!50, fill=lightgray!50] (7,6) -- (8,6) -- (8,0) -- (7,0);
\draw [lightgray!50, fill=lightgray!50] (6,8) -- (7.5,6.5) -- (9,8);
\draw [lightgray!50, fill=lightgray!50] (7,5) -- (7,6) -- (8,6) -- (8,5);
\draw [lightgray!50, fill=lightgray!50] (9,9) -- (9,10) -- (8,10) -- (8,9);
\draw [lightgray!50, fill=lightgray!50] (7,9) -- (7,10) -- (6,10) -- (6,9);
\draw [lightgray!50, fill=lightgray!50] (8,0) -- (8,1) -- (3,1) -- (3,0);
\draw [lightgray!50, fill=lightgray!50] (7,13) -- (7,10) -- (8,10) -- (8,13);
\draw [lightgray!50, fill=lightgray!50] (3,13) -- (3,12) -- (8,12) -- (8,13);
\draw [lightgray!50, fill=lightgray!50] (6,8) -- (6,9) -- (9,9) -- (9,8);
\draw [lightgray!50, fill=lightgray!50] (6,8) -- (9,8);
%leftrightshading
\draw [lightgray!50, fill=lightgray!50] (19,13) -- (19,12) -- (17,12) -- (17,13);
\draw [lightgray!50, fill=lightgray!50] (19,12) -- (17,12) -- (18,11);
\draw [lightgray!50, fill=lightgray!50] (19,10) -- (18,9) -- (17,10) -- (18,11);
\draw [lightgray!50, fill=lightgray!50] (19,8) -- (18,7) -- (17,8) -- (18,9);
\draw [lightgray!50, fill=lightgray!50] (19,6) -- (19,5) -- (17,5) -- (17,6) -- (18,7);
\draw [lightgray!50, fill=lightgray!50] (19,4) -- (19,5) -- (20,5) -- (20,4);
\draw [lightgray!50, fill=lightgray!50] (16,4) -- (16,5) -- (17,5) -- (17,4);
\draw [lightgray!50, fill=lightgray!50] (16,4) -- (20,4) -- (18,2);
\draw [lightgray!50, fill=lightgray!50] (16,0) -- (20,0) -- (20,4) -- (16,4);
%rightrightshading
\draw [lightgray!50, fill=lightgray!50] (19,0) -- (19,1) -- (24,1) -- (24,0);
\draw [lightgray!50, fill=lightgray!50] (23,6) -- (23,1) -- (24,1) -- (24,6);
\draw [lightgray!50, fill=lightgray!50] (23,6) -- (22,6) -- (22,8) -- (23,8);
\draw [lightgray!50, fill=lightgray!50] (25,6) -- (24,6) -- (24,8) -- (25,8);
\draw [lightgray!50, fill=lightgray!50] (25,7) -- (22,7) -- (22,8) -- (25,8);
\draw [lightgray!50, fill=lightgray!50] (25,8) -- (22,8) -- (23.5,9.5);
\draw [lightgray!50, fill=lightgray!50] (24,13) -- (24,10) -- (23.5,9.5) -- (23,10) -- (23,13);
\draw [lightgray!50, fill=lightgray!50] (24,12) -- (24,13) -- (22,13) -- (22,12);
\draw [lightgray!50, fill=lightgray!50] (24,12) -- (24,13) -- (18,13) -- (18,12);
\draw [thick] [dotted] (7.5,8) circle (3 cm);
\draw [thick] [dotted] (23.5,8) circle (3 cm);
%leftleftlink
\draw [thick] (0,0) -- (8,0);
\draw [thick] (0,0) -- (0,5);
\draw [thick] (4,1) -- (7,1);
\draw [thick] (4,1) -- (4,5);
\draw [thick] (0,5) -- (.7,5);
\draw [thick] (4,5) -- (1.3,5);
\draw [thick] (3,4.7) -- (3,4);
\draw [thick] (1,4) -- (3,4);
\draw [thick] (1,4) -- (1,6);
\draw [thick] (3,8) -- (2.3,7.3);
\draw [thick] (1,6) -- (1.7,6.7);
\draw [thick] (3,5.3) -- (3,6);
\draw [thick] (1,8) -- (3,6);
\draw [thick] (1,10) -- (3,8);
\draw [thick] (1,8) -- (1.7,8.7);
\draw [thick] (2.3,9.3) -- (3,10);
\draw [thick] (1,10) -- (1.7,10.7);
\draw [thick] (2.3,11.3) -- (3,12);
\draw [thick] (1,12) -- (3,10);
\draw [thick] (1,12) -- (1,13);
\draw [thick] (8,13) -- (1,13);
\draw [thick] (3,12) -- (7,12);
%rightleftlink
\draw [thick] (7,9) -- (7,12);
\draw [thick] (8,13) -- (8,10.3);
\draw [thick] (8,9) -- (8,9.7);
\draw [thick] (8,9) -- (7,9);
\draw [thick] (7.3,10) -- (9,10);
\draw [thick] (6.7,10) -- (6,10);
\draw [thick] (9,8) -- (9,10);
\draw [thick] (6,8) -- (6,10);
\draw [thick] (6,8) -- (8,6);
\draw [thick] (7,1) -- (7,6);
\draw [thick] (8,0) -- (8,6);
\draw [thick] (7.8,6.8) -- (9,8);
\draw [thick] (7,6) -- (7.2,6.2);
%leftrightlink
\draw [thick] (20-4,0) -- (28-4,0);
\draw [thick] (20-4,0) -- (20-4,5);
\draw [thick] (24-4,1) -- (27-4,1);
\draw [thick] (24-4,1) -- (24-4,5);
\draw [thick] (20-4,5) -- (20.7-4,5);
\draw [thick] (24-4,5) -- (21.3-4,5);
\draw [thick] (23-4,4.7) -- (23-4,4);
\draw [thick] (21-4,4) -- (23-4,4);
\draw [thick] (21-4,4) -- (21-4,6);
\draw [thick] (23-4,8) -- (22.3-4,7.3);
\draw [thick] (21-4,6) -- (21.7-4,6.7);
\draw [thick] (23-4,5.3) -- (23-4,6);
\draw [thick] (21-4,8) -- (23-4,6);
\draw [thick] (21-4,10) -- (23-4,8);
\draw [thick] (21-4,8) -- (21.7-4,8.7);
\draw [thick] (22.3-4,9.3) -- (23-4,10);
\draw [thick] (21-4,10) -- (21.7-4,10.7);
\draw [thick] (22.3-4,11.3) -- (23-4,12);
\draw [thick] (21-4,12) -- (23-4,10);
\draw [thick] (21-4,12) -- (21-4,13);
\draw [thick] (28-4,13) -- (21-4,13);
\draw [thick] (23-4,12) -- (27-4,12);
%rightrightlink
\draw [thick] (24,5.7) -- (24,0);
\draw [thick] (24,7) -- (24,5);
\draw [thick] (23,1) -- (23,5.7);
\draw [thick] (23,6.3) -- (23,7);
\draw [thick] (24,7) -- (23,7);
\draw [thick] (24.3,6) -- (25,6);
\draw [thick] (22,6) -- (23.7,6);
\draw [thick] (25,8) -- (25,6);
\draw [thick] (22,8) -- (22,6);
\draw [thick] (25,8) -- (23,10);
\draw [thick] (23,12) -- (23,10);
\draw [thick] (23.8,9.8) -- (24,10);
\draw [thick] (23.2,9.2) -- (22,8);
\draw [thick] (24,13) -- (24,10);
\end{tikzpicture}
\caption{$\Gamma_u(D,\sigma)$ and $\Gamma_u(D',\sigma')$ are isomorphic, while $\Gamma_s(D,\sigma)$ and $\Gamma_s(D',\sigma')$ are related by a Whitney twist.}
\label{figsix}
\end{figure}

On the other hand, in the situation indicated in Figure~\ref{figfive} the shaded checkerboard graphs $\Gamma_s(D,\sigma)$ and $\Gamma_s(D',\sigma')$ differ by a graph operation called a ``Whitney twist'' (or a ``2-switching''). Under this operation the vertices corresponding to the two shaded regions pictured in Figure \ref{figfive} exchange their incidences on edges corresponding to crossings inside the rotated tangle. (If the two shaded regions of Figure \ref{figfive} are portions of a single shaded region of $D$, the edge exchange has no effect.) As indicated by the example in Figure \ref{figsix}, a Whitney twist can certainly yield a nonisomorphic graph. (The shaded checkerboard graph from the right side of Figure \ref{figsix} has a vertex of degree four, but the shaded checkerboard graph from the left side does not have a vertex of degree four.) However, a Whitney twist always preserves the number of connected components; for completeness we provide the easy argument in Lemma~\ref{twist} below. 

\begin{definition}
\label{twistdef}
Let $\Gamma$ be a graph with subgraphs $\Gamma_1$ and $\Gamma_2$ such that $\Gamma = \Gamma_1 \cup \Gamma_2$, $E(\Gamma_1) \cap E(\Gamma_2) = \emptyset$, and $V(\Gamma_1) \cap V(\Gamma_2) = \{v,w\}$. Let $\Gamma'_2$ be the graph obtained from $\Gamma_2$ by exchanging all vertex-edge incidences involving $v$ for vertex-edge incidences involving $w$, and vice versa. Then the graph $\Gamma' = \Gamma_1 \cup \Gamma'_2$ is obtained from $\Gamma$ by a \emph{Whitney twist} with respect to $\{v,w\}$.
\end{definition}

Notice that Definition~\ref{twistdef} allows some fairly trivial cases: we might have $v=w$, $v$ or $w$ might be a cutpoint, or $v$ and $w$ might lie in two connected components of $\Gamma$. Some references do not call all of these cases ``Whitney twists.''

\begin{lemma}
\label{twist}
If $\Gamma'$ is obtained from $\Gamma$ by a Whitney twist, then $\Gamma$ and $\Gamma'$ have the same number of connected components.
\end{lemma}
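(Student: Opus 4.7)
The strategy is to localize where the twist can possibly affect connectivity -- only at the two vertices $v$ and $w$ -- and then use a short path argument. The key observation is that $\Gamma'_2$ is isomorphic to $\Gamma_2$ as an abstract graph, via the vertex bijection that swaps $v \leftrightarrow w$ and is the identity on $V(\Gamma_2) \setminus \{v,w\}$; this is immediate from the definition of the twist, since the twist is precisely the operation that swaps the two endpoints $v$ and $w$ across every edge of $\Gamma_2$. In particular $v$ and $w$ lie in the same connected component of $\Gamma_2$ if and only if they lie in the same connected component of $\Gamma'_2$.

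Next I would partition the components of $\Gamma$ into three groups: (i) components of $\Gamma_1$ that avoid $\{v,w\}$; (ii) components of $\Gamma_2$ that avoid $\{v,w\}$; and (iii) components meeting $\{v,w\}$. The partition is exhaustive and disjoint because $V(\Gamma_1) \cap V(\Gamma_2) = \{v,w\}$ and $E(\Gamma_1) \cap E(\Gamma_2) = \emptyset$, so any component of $\Gamma$ disjoint from $\{v,w\}$ lies entirely inside $\Gamma_1$ or entirely inside $\Gamma_2$. The same partition is defined for $\Gamma'$. Groups (i) and (ii) contribute the same count to $\Gamma$ as to $\Gamma'$: group (i) depends only on $\Gamma_1$, which is unchanged by the twist, and group (ii) is preserved by the isomorphism $\Gamma_2 \cong \Gamma'_2$ noted above. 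So the task reduces to showing that group (iii) has the same cardinality in $\Gamma$ as in $\Gamma'$.

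Group (iii) has cardinality $1$ or $2$ according as $v$ and $w$ lie in the same or different components of $\Gamma$, so it is enough to show that $v$ and $w$ lie in the same component of $\Gamma$ if and only if they lie in the same component of $\Gamma'$. The main obstacle, and the one step that needs real argument, is the following characterization: $v$ and $w$ lie in the same component of $\Gamma$ if and only if they lie in the same component of $\Gamma_1$ or in the same component of $\Gamma_2$. To establish it, I would take a shortest path from $v$ to $w$ in $\Gamma$; no interior vertex can equal $v$ or $w$, else the path could be shortened, so every interior vertex belongs to only one of $V(\Gamma_1)$ and $V(\Gamma_2)$. Since every edge incident to such a vertex must then lie in the corresponding $\Gamma_i$, walking along the path forces all of its edges into a single $\Gamma_i$. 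Applying this characterization to both $\Gamma$ and $\Gamma'$, and combining with the equivalence ``$v,w$ connected in $\Gamma_2$ iff $v,w$ connected in $\Gamma'_2$'' from the first paragraph (and the trivial fact that the same holds for $\Gamma_1$ since $\Gamma_1$ is untouched), one concludes that the group (iii) counts match, and therefore $\Gamma$ and $\Gamma'$ have the same number of connected components.
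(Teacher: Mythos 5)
Your proof is correct and follows essentially the same approach as the paper: both localize the effect of the twist to the components meeting $\{v,w\}$ and observe that connectivity to $v$ and $w$ is preserved up to the swap on the $\Gamma_2$ side. You simply supply explicit detail (the partition into three groups and the shortest-path argument showing a $v$--$w$ path lies entirely in one $\Gamma_i$) for steps the paper dismisses as obvious.
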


\begin{proof}
The lemma follows from three obvious properties. A path of $\Gamma$ that does not include $v$ or $w$ is a path of $\Gamma'$, and vice versa. An element of $V(\Gamma_1)$ is connected to $v$ (resp. $w$) by a path in $\Gamma$ if and only if it is connected to $v$ (resp. $w$) by a path in $\Gamma'$. And an element of $V(\Gamma_2)-\{v,w\}$ is connected to $v$ (resp. $w$) by a path in $\Gamma$ if and only if it is connected to $w$ (resp. $v$) by a path in $\Gamma'$.
\end{proof}

Lemma~\ref{twist} completes the verification that $G^{adj}(D,\sigma)=G^{adj}(D',\sigma')$ in the situation of Figure~\ref{figfive}. A similar analysis indicates that the second kind of elementary mutation preserves the Goeritz matrix and adjusted Goeritz matrix associated with the shadings indicated in Figure~\ref{figseven}. 
 
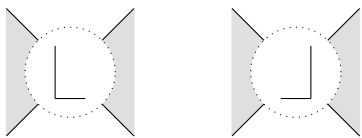
\begin{figure} [bht]
\centering
\begin{tikzpicture}
%shading
\draw [lightgray!50, fill=lightgray!50] (4-1.075736+.424264,.424264+.424264) -- (4-1.075736+.424264,-.424264-.424264) -- (2.5,0);
\draw [lightgray!50, fill=lightgray!50] (4-1.5-.424264-.424264,-.424264-.424264) -- (4-1.5-.424264-.424264,.424264+.424264) -- (2.5,0);
\draw [lightgray!50, fill=lightgray!50] (7-1.075736+.424264,.424264+.424264) -- (7-1.075736+.424264,-.424264-.424264) -- (5.5,0);
\draw [lightgray!50, fill=lightgray!50] (7-1.5-.424264-.424264,-.424264-.424264) -- (7-1.5-.424264-.424264,.424264+.424264) -- (5.5,0);
\draw [thick] [dotted] (1+1.5,0) [fill=white] circle (.6 cm);
\draw [thick] (1+6-4.5+.424264,.424264) -- (1+6-4.075736+.424264,.424264+.424264);
\draw [thick] (1+6-4.5-.424264,.424264) -- (1+6-4.5-.424264-.424264,.424264+.424264);
\draw [thick] (1+6-4.5-.424264,-.424264) -- (1+6-4.5-.424264-.424264,-.424264-.424264);
\draw [thick] (1+6-4.5+.424264,-.424264) -- (1+6-4.5+.424264+.424264,-.424264-.424264);
\draw [thick] (1+3-1.7,-.35) -- (1+3-1.7,.35);
\draw [thick] (1+3-1.7,-.35) -- (1+3-1.3,-.35);
\draw [thick] [dotted] (1+4.5,0) [fill=white] circle (.6 cm);
\draw [thick] (1+9-4.5+.424264,.424264) -- (1+9-4.075736+.424264,.424264+.424264);
\draw [thick] (1+9-4.5-.424264,.424264) -- (1+9-4.5-.424264-.424264,.424264+.424264);
\draw [thick] (1+9-4.5-.424264,-.424264) -- (1+9-4.5-.424264-.424264,-.424264-.424264);
\draw [thick] (1+9-4.5+.424264,-.424264) -- (1+9-4.5+.424264+.424264,-.424264-.424264);
\draw [thick] (1+6.4-1.7,-.35) -- (1+6.4-1.7,.35);
\draw [thick] (1+6-1.7,-.35) -- (1+6-1.3,-.35);
\end{tikzpicture}
\caption{An elementary mutation of the second kind also preserves one $G^{adj}(D,\sigma)$ matrix.}
\label{figseven}
\end{figure}

An elementary mutation of the third kind does not preserve either Goeritz matrix, in general. (Both checkerboard graphs are subjected to Whitney twists.) However, an elementary mutation of the third kind can be performed by first performing an elementary mutation of the first kind, and then performing an elementary mutation of the second kind. It follows that every finite sequence $L=L_{1},\dots ,L_{k}=L^{\prime }$ satisfying Definition \ref{mutant} yields a finite sequence $L=L'_{1},\dots,L'_{k^{\prime }}=L^{\prime }$ satisfying Definition \ref{goeritzrel}.

\section{Strongly normal shadings of link diagrams}
\label{sec:normal}

The following definition is a refinement of Goeritz's idea of ``normal diagrams,'' discussed in Section 1.5 of Reidemeister's classic monograph \cite{R}.

\begin{definition} 
\label{strongnormaldiagram}
Let $D$ be a link diagram, with a shading $\sigma$. Let $R_1,\dots,R_n$ be the unshaded complementary regions according to $\sigma$, and for $i, j \in \{1,\dots,n\}$ let $C_{ij}$ be the set of crossings of $D$ that are incident only on $R_i$ and $R_j$. Then $\sigma$ is a \emph{strongly normal shading} of $D$ if these five properties hold:
\begin{enumerate}
\item The unbounded complementary region is shaded, i.e., $\sigma=\sigma_s$.
\item The shaded checkerboard graph $\Gamma_s(D,\sigma)$ is connected, i.e., $\beta_s(D,\sigma)=1$.
\item $C_{ii}=\emptyset$ for every $i$.
\item No $C_{ij}$ contains both a crossing with $\eta=1$ and a crossing with $\eta=-1$.
\item If $\left \vert C_{ij} \right \vert > 1$ then all of the crossings in $C_{ij}$ occur along a single two-stranded braid. (See Figure \ref{figeight}.)
\end{enumerate}
\end{definition}

\begin{figure} [bht]
\centering 
\begin{tikzpicture} [scale=0.5]
\draw [lightgray!50, fill=lightgray!50] (1+2,0) -- (1+3,1) -- (1+2,2);
\draw [lightgray!50, fill=lightgray!50] (1+8,0) -- (1+7,1) -- (1+8,2) -- (1+9,1);
\draw [lightgray!50, fill=lightgray!50] (1+4,0) -- (1+3,1) -- (1+4,2) -- (1+5,1);
\draw [lightgray!50, fill=lightgray!50] (1+6,0) -- (1+5,1) -- (1+6,2) -- (1+7,1);
\draw [lightgray!50, fill=lightgray!50] (1+10,0) -- (1+9,1) -- (1+10,2);
\draw [lightgray!50, fill=lightgray!50] (15,0) -- (16,1) -- (15,2);
\draw [lightgray!50, fill=lightgray!50] (17,0) -- (16,1) -- (17,2) -- (18,1);
\draw [lightgray!50, fill=lightgray!50] (19,0) -- (18,1) -- (19,2) -- (20,1);
\draw [lightgray!50, fill=lightgray!50] (21,0) -- (20,1) -- (21,2) -- (22,1);
\draw [lightgray!50, fill=lightgray!50] (23,0) -- (22,1) -- (23,2);
\draw [thick] (1+2,0) -- (1+4,2);
\draw [thick] (1+4,0) -- (1+6,2);
\draw [thick] (1+6,0) -- (1+8,2);
\draw [thick] (1+8,0) -- (1+10,2);
\draw [thick] (1+2,2) -- (1+2.8,1.2);
\draw [thick] (1+3.2,0.8) -- (1+4,0);
\draw [thick] (1+4,2) -- (1+4.8,1.2);
\draw [thick] (1+5.2,0.8) -- (1+6,0);
\draw [thick] (1+6,2) -- (1+6.8,1.2);
\draw [thick] (1+7.2,0.8) -- (1+8,0);
\draw [thick] (1+8,2) -- (1+8.8,1.2);
\draw [thick] (1+9.2,0.8) -- (1+10,0);
\draw [thick] (15,2) -- (17,0);
\draw [thick] (17,2) -- (19,0);
\draw [thick] (19,2) -- (21,0);
\draw [thick] (21,2) -- (23,0);
\draw [thick] (15,0) -- (15.8,0.8);
\draw [thick] (16.2,1.2) -- (17,2);
\draw [thick] (17,0) -- (17.8,0.8);
\draw [thick] (18.2,1.2) -- (19,2);
\draw [thick] (19,0) -- (19.8,0.8);
\draw [thick] (20.2,1.2) -- (21,2);
\draw [thick] (21,0) -- (21.8,0.8);
\draw [thick] (22.2,1.2) -- (23,2);
\end{tikzpicture}
\caption{Two-stranded braids of four crossings.}
\label{figeight}
\end{figure}
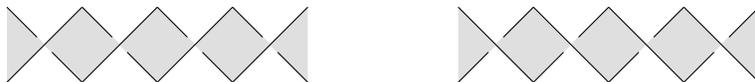

Our next proposition shows that Definition~\ref{strongnormaldiagram} provides standard representatives for link diagrams under the combination of $\sim_C$ and $\sim_G$. The proof combines and extends discussions of Lipson \cite{L} and Reidemeister \cite{R}.
\begin{proposition}
\label{goeritzprop}
If $\sigma$ is a shading of a link diagram $D$ then there is a link diagram $D'$ with a strongly normal shading $\sigma'$, such that $D \sim_C D'$ and $G^{adj}(D,\sigma)=G^{adj}(D',\sigma')$.
\end{proposition}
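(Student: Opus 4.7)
The plan is to transform $(D,\sigma)$ into a strongly normal form by a finite sequence of local moves — ambient isotopies, Reidemeister moves, elementary mutations, and the construction of Section~2 — each of which preserves both $\sim_C$ and $G^{adj}$. Property 1 is free: by Proposition~\ref{infpt}, applied with $R$ any shaded region of $(D,\sigma)$, we may replace $(D,\sigma)$ by a pair $(D_1,\sigma_1)$ with $\sigma_1=\sigma'_s$ and $G^{adj}(D_1,\sigma_1)=G^{adj}(D,\sigma)$, so throughout I may assume $\sigma=\sigma_s$.

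Properties 5, 4, and 3 are handled by local moves near each pair of unshaded regions. For Property 5, if $|C_{ij}|>1$ with $i\neq j$, the crossings of $C_{ij}$ all border the same pair $R_i,R_j$, and their strands can be joined through arcs in $R_i$ and in $R_j$; an ambient isotopy then collects these crossings consecutively on a two-strand braid. For Property 4, once $C_{ij}$ is arranged as a braid, any adjacent pair of opposite Goeritz sign is a Reidemeister~II configuration and cancels: $\eta(c)+\eta(c')=0$ leaves $G_{ij}$ untouched, and a short case check on the colour of the removed bigon verifies that $\beta_s$ and the rest of $G^{adj}$ are preserved. For Property 3, a crossing $c\in C_{ii}$ is a loop in the unshaded checkerboard graph at $R_i$; when the two shaded corners of $c$ also belong to a single shaded region, $c$ is genuinely nugatory and a Reidemeister~I move removes it, and when they belong to two distinct shaded regions, an elementary mutation on a small tangle containing $c$ separates the two lobes of $R_i$. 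In either case $C_{ii}$ contributes nothing to any entry of $G$, so $G^{adj}$ is preserved.

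The main obstacle is Property 2, namely $\beta_s(D',\sigma')=1$. The subtle point is that $G^{adj}(D,\sigma)$ already carries a $(\beta_s(D,\sigma)-1)\times(\beta_s(D,\sigma)-1)$ zero block, so any target diagram with $\beta_s=1$ must supply the missing $\beta_s(D,\sigma)-1$ rows and columns of zeros as unshaded regions incident on no crossings. My plan is the following surgery: for any two shaded regions lying in different components of $\Gamma_s(D,\sigma)$ that share an adjacent unshaded region $R_k$, perform a Reidemeister~II move between two arcs of $R_k$ bounding the two components, chosen so that the new bigon is shaded. This bridges the two components in $\Gamma_s$, the two new crossings lie in $C_{kk}$ (and so contribute nothing to $G$), and the move simultaneously splits off a crossing-free unshaded region which takes the place of one zero row and column in $B(D,\sigma)$. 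Iterating $\beta_s(D,\sigma)-1$ times produces a connected $\Gamma_s$ with $G^{adj}$ intact.

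The final bookkeeping hurdle is that each surgery can reinstate a property already achieved: the merging Reidemeister~II creates new crossings in some $C_{kk}$, the mutations used for Property 3 may alter some $|C_{ij}|$, and so on. I would therefore order the steps to strictly decrease a lexicographic measure such as $(\beta_s,\,\#\{(i,j):C_{ij}\text{ contains both signs}\},\,\sum_i |C_{ii}|)$; since each move decreases one coordinate without increasing the earlier ones, the process terminates at a diagram satisfying all five conditions simultaneously.
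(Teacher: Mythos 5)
Your overall strategy (normalize the five properties one at a time by moves preserving $\sim_C$ and $G^{adj}$, then control the interference) is the paper's strategy, and your treatments of Properties 1 and 4 match the paper's. But the surgery you propose for Property 2 cannot work. In a Reidemeister II move performed between two arcs on the boundary of the same unshaded region $R_k$, the new bigon is forced to be \emph{unshaded}: at each of the two new crossings the bigon sits at the corner opposite a corner of $R_k$, and opposite corners of a crossing always carry the same shading. So there is no choice ``so that the new bigon is shaded,'' the two new crossings do not lie in $C_{kk}$, and the region you split off is not crossing-free. With the correct (unshaded) bigon $B$ the move does preserve $G^{adj}$ --- the two crossings lie in $C_{kB}$ with opposite Goeritz indices, so $B$ contributes a zero row and column while $\beta_s$ drops by one --- but it manufactures a set $C_{kB}$ containing both a $+1$ and a $-1$ crossing, violating Property 4 in a way that can only be repaired by the Reidemeister II cancellation that undoes the move. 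Your lexicographic measure does not rescue this: the surgery decreases $\beta_s$ but increases the mixed-sign count, and the only move that decreases the mixed-sign count for this pair increases $\beta_s$ again. The paper avoids creating any crossings at this step: an elementary mutation connect-sums the enclosed subdiagram onto the boundary of the enclosing region, leaving behind a small crossing-free circle that an ambient isotopy then parks inside a shaded region, where it becomes exactly the crossing-free unshaded region needed to absorb one zero row and column of $B(D,\sigma)$.

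Two smaller steps are also not right as stated. For Property 5, two crossings of $C_{ij}$ separated by a tangle cannot be ``collected onto a two-strand braid by an ambient isotopy'' that preserves $G^{adj}$: the isotopy that brings them together (the $\Omega.5$ move) drags the intervening tangle past a crossing and changes which unshaded regions its pieces border, so it changes the Goeritz matrix; an elementary mutation of the second kind is needed afterward to restore it. For Property 3, in the case where the two shaded corners of $c\in C_{ii}$ lie in distinct shaded regions, a mutation of a small tangle containing only $c$ does nothing --- the four arc-ends reattach to the same regions, so $R_i$ is still incident twice at $c$. What is needed is the $\Omega.4$ move, an ambient isotopy that rotates an entire lobe of the diagram hanging off $c$ so as to unwind the crossing.
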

\begin{proof}
According to Proposition \ref{noshadeadj}, there is a diagram $D'$ of the same link type as $D$, which has $G^{adj}(D,\sigma)=G^{adj}(D',\sigma')$ and $\sigma'=\sigma'_s$.

\begin{figure} [bht]
\centering
\begin{tikzpicture} [scale=0.47]
%top outer box 
\draw [lightgray!50, fill=lightgray!50] (2,0+12) -- (11,0+12) -- (11,1+12) -- (2,1+12);
\draw [lightgray!50, fill=lightgray!50] (1,1+12) -- (1,9+12) -- (2,9+12) -- (2,1+12);
\draw [lightgray!50, fill=lightgray!50] (2,9+12) -- (11,9+12) -- (11,10+12) -- (2,10+12);
\draw [lightgray!50, fill=lightgray!50] (11,9+12) -- (11,1+12) -- (12,1+12) -- (12,9+12);
\draw [thick] (11,10+12) -- (11,1.3+12);
\draw [thick] (11,0+12) -- (11,0.7+12);
\draw [thick] (2,10+12) -- (2,9.3+12);
\draw [thick] (2,1.3+12) -- (2,8.7+12);
\draw [thick] (2,0+12) -- (2,0.7+12);
\draw [thick] (1,1+12) -- (12,1+12);
\draw [thick] (1,9+12) -- (10.7,9+12);
\draw [thick] (11.3,9+12) -- (12,9+12);
%top inner diagram
\draw [lightgray!50, fill=lightgray!50] (3,7+12) -- (5,7+12) -- (6,6+12) -- (5,5+12) -- (5,4+12) -- (3,4+12);
\draw [lightgray!50, fill=lightgray!50] (9,7+12) -- (7,7+12) -- (6,6+12) -- (7,5+12) -- (7,4+12) -- (9,4+12) -- (9,5+12) -- (8,5+12) -- (8,6+12) -- (9,6+12);
\draw [lightgray!50, fill=lightgray!50] (10,6+12) -- (10,5+12) -- (9,5+12) -- (9,6+12);
\draw [lightgray!50, fill=lightgray!50] (5,4+12) -- (7,4+12) -- (7,2+12) -- (5,2+12);
\draw [thick] [dotted] (2.5,8+12) -- (2.5,3.5+12) -- (4.5,3.5+12) -- (4.5,2.5+12) --(7.5,2.5+12) -- (7.5,3.5+12) -- (10.5,3.5+12) -- (10.5,8+12) -- (7.5,8+12) -- (7.5,9.5+12) -- (4.5,9.5+12) -- (4.5,8+12) -- (2.5,8+12);
\draw [thick] (5,14) -- (7,14);
\draw [thick] (7,17) -- (7,14);
\draw [thick] (7,17) -- (6.3,17.7);
\draw [thick] (5,14) -- (5,15.7);
\draw [thick] (5,16.3) -- (5,17);
\draw [thick] (7,19) -- (5,17);
\draw [thick] (8,17) -- (10,17);
\draw [thick] (3,16) -- (6.7,16);
\draw [thick] (3,16) -- (3,19);
\draw [thick] (5,19) -- (3,19);
\draw [thick] (5,19) -- (5.7,18.3);
\draw [thick] (7,19) -- (9,19);
\draw [thick] (9,17.3) -- (9,19);
\draw [thick] (9.3,18) -- (10,18);
\draw [thick] (10,17) -- (10,18);
\draw [thick] (8.7,18) -- (8,18);
\draw [thick] (8,17) -- (8,18);
\draw [thick] (9,16.7) -- (9,16);
\draw [thick] (7.3,16) -- (9,16);
\draw [thick] [->] [>=angle 90] (1.5,11.5) -- (0,10.5);
\draw [thick] [->] [>=angle 90] (5.5,5) -- (7.5,5);
%lower left outer box 
\draw [lightgray!50, fill=lightgray!50] (-7+2,0) -- (-7+11,0) -- (-7+11,1) -- (-7+2,1);
\draw [lightgray!50, fill=lightgray!50] (-7+1,1) -- (-7+1,9) -- (-7+2,9) -- (-7+2,1);
\draw [lightgray!50, fill=lightgray!50] (-7+2,9) -- (-7+11,9) -- (-7+11,10) -- (-7+2,10);
\draw [lightgray!50, fill=lightgray!50] (-7+12,9) -- (-7+12,1) -- (-7+11,1) -- (-7+11,9);
\draw [thick] (-7+11,10) -- (-7+11,1.3);
\draw [thick] (-7+11,0) -- (-7+11,0.7);
\draw [thick] (-7+2,10) -- (-7+2,9.3);
\draw [thick] (-7+2,1.3) -- (-7+2,8.7);
\draw [thick] (-7+2,0) -- (-7+2,0.7);
\draw [thick] (-7+1,1) -- (-7+12,1);
\draw [thick] (-7+7,9) -- (-7+10.7,9);
\draw [thick] (-7+1,9) -- (-7+5,9);
\draw [thick] (-7+11.3,9) -- (-7+12,9);
%lower left diagram
\draw [lightgray!50, fill=lightgray!50] (-4,7) -- (-2,7) -- (-2,6) -- (-1,5) -- (-2,4) -- (-4,4);
\draw [lightgray!50, fill=lightgray!50] (-2,9) -- (-2,7) -- (0,7) -- (0,9);
\draw [lightgray!50, fill=lightgray!50] (-1,5) -- (0,6) -- (0,7) -- (2,7) -- (2,6) -- (1,6) -- (1,5) -- (2,5) -- (2,4) -- (0,4);
\draw [lightgray!50, fill=lightgray!50] (2,5) -- (2,6) -- (3,6) -- (3,5);
\draw [lightgray!50, fill=lightgray!50] (-2,3) -- (0,3) -- (0,2) -- (-2,2);
\draw [thick] [dotted] (-7.5+3,8) -- (-7.5+3,3.5) -- (-7+4.5,3.5) -- (-7+4.5,2.5) -- (-7+7.5,2.5) -- (-7+7.5,3.5) -- (-7+10.5,3.5) -- (-7+10.5,8) -- (-7+7.5,8) -- (-7+7.5,9.5) -- (-7+4.5,9.5) -- (-7+4.5,8) -- (-7.5+3,8);
\draw [thick] (-2,2) -- (0,2);
\draw [thick] (0,3) -- (0,2);
\draw [thick] (0,3) -- (-2,3);
\draw [thick] (-2,2) -- (-2,3);
\draw [thick] (0,6) -- (-2,4);
\draw [thick] (0,6) -- (0,6.7);
\draw [thick] (0,9) -- (0,7.3);
\draw [thick] (-2,9) -- (-2,6);
\draw [thick] (-1.3,5.3) -- (-2,6);
\draw [thick] (-0.7,4.7) -- (0,4);
\draw [thick] (2,4) -- (0,4);
\draw [thick] (2,4.7) -- (2,4);
\draw [thick] (2,5.3) -- (2,7);
\draw [thick] (-1.7,7) -- (2,7);
\draw [thick] (-2.3,7) -- (-4,7);
\draw [thick] (-4,4) -- (-4,7);
\draw [thick] (-4,4) -- (-2,4);
\draw [thick] (1.7,6) -- (1,6) -- (1,5) -- (3,5) -- (3,6) -- (2.3,6);
%lower right outer box
\draw [lightgray!50, fill=lightgray!50] (14-7+2,0) -- (14-7+11,0) -- (14-7+11,1) -- (14-7+2,1);
\draw [lightgray!50, fill=lightgray!50] (14-7+1,1) -- (14-7+1,9) -- (14-7+2,9) -- (14-7+2,1);
\draw [lightgray!50, fill=lightgray!50] (14-7+2,9) -- (14-7+11,9) -- (14-7+11,10) -- (14-7+2,10);
\draw [lightgray!50, fill=lightgray!50] (14-7+11,9) -- (14-7+11,1) -- (14-7+12,1) -- (14-7+12,9);
\draw [thick] (14-7+11,10) -- (14-7+11,1.3);
\draw [thick] (14-7+11,0) -- (14-7+11,0.7);
\draw [thick] (14-7+2,10) -- (14-7+2,9.3);
\draw [thick] (14-7+2,1.3) -- (14-7+2,8.7);
\draw [thick] (14-7+2,0) -- (14-7+2,0.7);
\draw [thick] (14-7+1,1) -- (14-7+12,1);
\draw [thick] (14-7+7,9) -- (14-7+10.7,9);
\draw [thick] (14-7+1,9) -- (14-7+5,9);
\draw [thick] (14-7+11.3,9) -- (14-7+12,9);
%lower right diagram
\draw [lightgray!50, fill=lightgray!50] (14-4,7) -- (14-2,7) -- (14-2,6) -- (14-1,5) -- (14-2,4) -- (14-4,4);
\draw [lightgray!50, fill=lightgray!50] (14-2,9) -- (14-2,7) -- (14+0,7) -- (14+0,9);
\draw [lightgray!50, fill=lightgray!50] (14-1,5) -- (14+0,6) -- (14+0,7) -- (14+2,7) -- (14+2,6) -- (14+1,6) -- (14+1,5) -- (14+2,5) -- (14+2,4) -- (14+0,4);
\draw [lightgray!50, fill=lightgray!50] (14+2,5) -- (14+2,6) -- (14+3,6) -- (14+3,5);
\draw [lightgray!50, fill=white] (11.5,6.5) -- (11.5,4.5) -- (10.5,4.5) -- (10.5,6.5);
\draw [thick] (14+0,6) -- (14-2,4);
\draw [thick] (14-2,4) -- (14-4,4);
\draw [thick] (14+0,6) -- (14+0,6.7);
\draw [thick] (14+0,9) -- (14+0,7.3);
\draw [thick] (14-2,9) -- (14-2,6);
\draw [thick] (14-1.3,5.3) -- (14-2,6);
\draw [thick] (14-0.7,4.7) -- (14+0,4);
\draw [thick] (14+2,4) -- (14+0,4);
\draw [thick] (14+2,4.7) -- (14+2,4);
\draw [thick] (14+2,5.3) -- (14+2,7);
\draw [thick] (14-1.7,7) -- (14+2,7);
\draw [thick] (14-2.3,7) -- (14-4,7);
\draw [thick] (14-4,4) -- (14-4,7);
\draw [thick] (14+1.7,6) -- (14+1,6);
\draw [thick] (14+1,6) -- (14+1,5);
\draw [thick] (14+1,5) -- (14+3,5);
\draw [thick] (14+3,5) -- (14+3,6);
\draw [thick] (14+3,6) -- (14+2.3,6);
\draw [thick] (14-3.5,4.5) -- (14-3.5,6.5);
\draw [thick] (14-2.5,6.5) -- (14-3.5,6.5);
\draw [thick] (14-2.5,4.5) -- (14-3.5,4.5);
\draw [thick] (14-2.5,4.5) -- (14-2.5,6.5);
\end{tikzpicture}
\caption{Using an elementary mutation and an ambient isotopy to reduce $\beta_s(D,\sigma)$ while preserving $G^{adj}(D,\sigma)$.}
\label{fignine}
\end{figure}
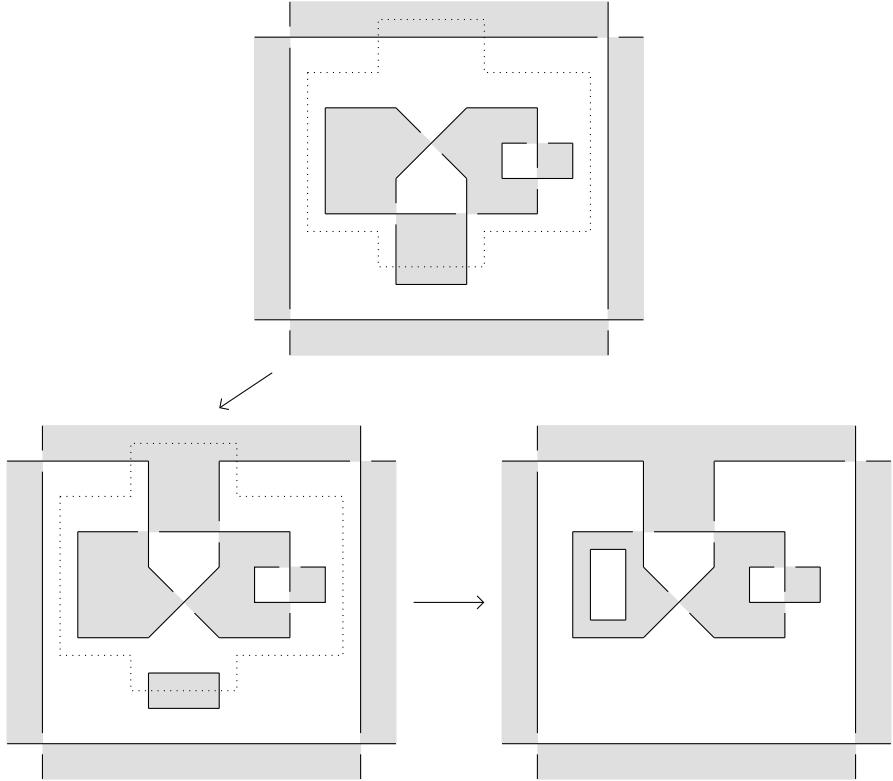

If $D$ is a link diagram that satisfies part 1 of Definition \ref{strongnormaldiagram} but does not satisfy part 2, then it is possible to use ambient isotopies and elementary mutations to produce a link diagram $D'$ with $G^{adj}(D,\sigma)=G^{adj}(D',\sigma')$ and $\beta_s(D',\sigma')=1$. First, find a connected component of $\Gamma_s(D,\sigma)$ that does not include the vertex corresponding to the unbounded complementary region. Then $D$ has a subdiagram $\overline{D}$, completely enclosed in some $R_i$. If $\overline{D}$ has no crossing then it is simply a set of $k \geq 1$ closed curves, some of which may be concentric. We can use an ambient isotopy to reduce $\beta_s(D,\sigma)$ without changing $G^{adj}(D,\sigma)$, by replacing $\overline{D}$ with $k$ small, non-concentric circles, all located in a shaded region of $D$. If $\overline{D}$ has a crossing then using an elementary mutation, we can replace $D$ with a diagram $D'$ in which $\overline{D}$ has been attached to the boundary of $R_i$ with a connected sum, and $R_i$ encloses a small, shaded, crossing-free closed curve instead. (See Figure \ref{fignine} for an example.) As before, we can use an ambient isotopy to move the shaded, crossing-free closed curve into a shaded region, producing an unshaded, crossing-free closed curve. It is easy to see that these manipulations reduce $\beta_s(D,\sigma)$ without affecting $G^{adj}(D,\sigma)$.
\begin{figure} [bht]
\centering
\begin{tikzpicture}
%shadingleft
\draw [lightgray!50, fill=lightgray!50] (-1.075736+.424264,.424264+.424264) -- (-1.5-.424264-.424264,.424264+.424264) -- (-1.5,0);
\draw [lightgray!50, fill=lightgray!50] (-1.075736+.424264,-.424264-.424264-1) -- (-1.5-.424264-.424264,-.424264-.424264-1) -- (-1.5-.424264-.424264,-.424264-.424264) -- (-1.5,0) -- (-1.075736+.424264,-.424264-.424264);
\draw [lightgray!50, fill=lightgray!50] (-4.075736+.424264,.424264+.424264) -- (-4.5-.424264-.424264,.424264+.424264) -- (-4.5,0);
\draw [lightgray!50, fill=lightgray!50] (-4.075736+.424264,-.424264-.424264) -- (-4.5-.424264-.424264,-.424264-.424264) -- (-4.5,0);
\draw [lightgray!50, fill=lightgray!50] (-4.075736+.424264,-.424264-.424264) -- (-4.5,-.424264-.424264-.5) -- (-4.5-.424264-.424264,-.424264-.424264);
\draw [lightgray!50, fill=lightgray!50] (-4.075736+.424264,-.424264-.424264-1) -- (-4.5,-.424264-.424264-.5) -- (-4.5-.424264-.424264,-.424264-.424264-1);
\draw [thick] [dotted] [fill=white] (-4.5,0) circle (.6 cm);
%.3 times sqrt(2) is .424264
\draw [thick] (-1.5-.424264-.424264,-.424264-.424264-1) -- (-1.5-.424264-.424264,-.424264-.424264);
\draw [thick] (-1.075736+.424264,-.424264-.424264-1) -- (-1.075736+.424264,-.424264-.424264);
\draw [thick] (-4.5+.424264+.424264,.424264+.424264) -- (-4.5-.424264-.424264,.424264+.424264);
\draw [thick] (-4.5+.424264,.424264) -- (-4.075736+.424264,.424264+.424264);
\draw [thick] (-1.5+.424264+.424264,.424264+.424264) -- (-1.075736-.424264-.424264-.424264,.424264+.424264);
\draw [thick] (-4.5-.424264,.424264) -- (-4.5-.424264-.424264,.424264+.424264);
\draw [thick] (-4.5-.424264,-.424264) -- (-4.5-.424264-.424264,-.424264-.424264);
\draw [thick] (-4.5+.424264,-.424264) -- (-4.5+.424264+.424264,-.424264-.424264);
\draw [thick] (-4.7,-.35) -- (-4.7,.35);
\draw [thick] (-4.7,-.35) -- (-4.3,-.35);
\draw [thick] [dotted] (-1.5,0) [fill=white] circle (.6 cm);
\draw [thick] (-4.075736+.424264,-.424264-.424264) -- (-4.5-.424264-.424264,-.424264-.424264-1);
\draw [thick] (-4.075736+.424264,-.424264-.424264-1) -- (-4.5-.424264-.424264,-.424264-.424264);
\draw [thick] (-1.5+.424264,.424264) -- (-1.075736+.424264,.424264+.424264);
\draw [thick] (-1.5-.424264,.424264) -- (-1.5-.424264-.424264,.424264+.424264);
\draw [thick] (-1.5-.424264,-.424264) -- (-1.5-.424264-.424264,-.424264-.424264);
\draw [thick] (-1.5+.424264,-.424264) -- (-1.5+.424264+.424264,-.424264-.424264);
\draw [thick] (-1.3,-.35) -- (-1.3,.35);
\draw [thick] (-1.7,-.35) -- (-1.3,-.35);
\end{tikzpicture}
\caption{A kind of ambient isotopy called an $\Omega.4$ move. The direction of rotation is chosen to ``unwind'' the crossing illustrated on the left.}
\label{figten}
\end{figure}
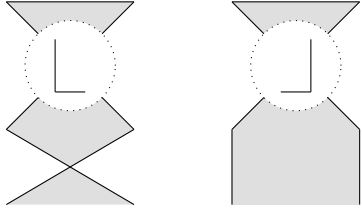

If $C_{ii}$ is nonempty then $D$ has a crossing where an unshaded region $R_i$ is incident twice, as indicated on the left in Figure \ref{figten}. Pictured in the figure is a kind of ambient isotopy called an $\Omega.4$ move. The indicated tangle is not cut out and rotated separately, as it would be in a mutation; instead the entire pictured portion of the link is rotated in 3-space, maintaining the connection with the rest of the link indicated at the bottom of Figure \ref{figten} while ``unwinding'' the pictured crossing. The $\Omega.4$ move produces a diagram $D'$ of the same link type as $D$. It is evident that $G^{adj}(D,\sigma)=G^{adj}(D',\sigma')$, and in $D'$ the crossing from $C_{ii}$ has been eliminated.

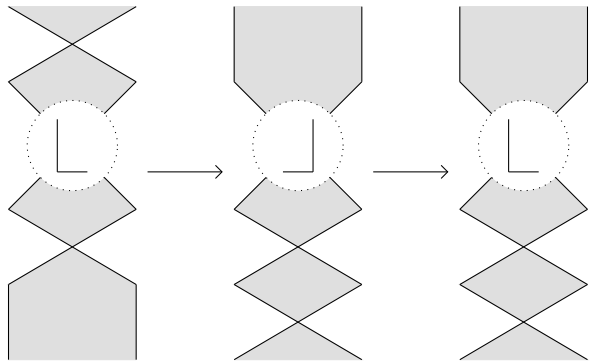
\begin{figure} [bht]
\centering
\begin{tikzpicture}
%shadingleft
\draw [lightgray!50, fill=lightgray!50] (-4.5+.424264+.424264,.424264+.424264) -- (-4.5,.424264+.424264+.5) -- (-4.5-.424264-.424264,.424264+.424264) -- (-4.5,0);
\draw [lightgray!50, fill=lightgray!50] (-4.5,.424264+.424264+.5) -- (-4.5-.424264-.424264,1.424264+.424264) -- (-4.5+.424264+.424264,1.424264+.424264);
\draw [lightgray!50, fill=lightgray!50] (-4.075736+.424264,.424264+.424264) -- (-4.5-.424264-.424264,.424264+.424264) -- (-4.5,0);
\draw [lightgray!50, fill=lightgray!50] (-4.075736+.424264,-.424264-.424264-1) -- (-4.5-.424264-.424264,-.424264-.424264-1) -- (-4.075736+.424264,-.424264-.424264) -- (-4.5,0) -- (-4.5-.424264-.424264,-.424264-.424264);
\draw [lightgray!50, fill=lightgray!50] (-4.075736+.424264,-.424264-.424264-1) -- (-4.5-.424264-.424264,-.424264-.424264-1) -- (-4.5-.424264-.424264,-.424264-.424264-2) -- (-4.075736+.424264,-.424264-.424264-2);
\draw [lightgray!50, fill=lightgray!50] (-1.075736+.424264,-.424264-.424264-1) -- (-1.5-.424264-.424264,-.424264-.424264-1) -- (-1.075736+.424264,-.424264-.424264-2) -- (-1.5-.424264-.424264,-.424264-.424264-2);
\draw [lightgray!50, fill=lightgray!50] (-1.075736+.424264,.424264+.424264) -- (-1.5-.424264-.424264,.424264+.424264) -- (-1.5,0);
\draw [lightgray!50, fill=lightgray!50] (-1.075736+.424264,.424264+.424264) -- (-1.5-.424264-.424264,.424264+.424264) -- (-1.5-.424264-.424264,1.424264+.424264) -- (-1.075736+.424264,1.424264+.424264);
\draw [lightgray!50, fill=lightgray!50] (-1.075736+.424264,-.424264-.424264) -- (-1.5-.424264-.424264,-.424264-.424264) -- (-1.5,0);
\draw [lightgray!50, fill=lightgray!50] (-1.075736+.424264,-.424264-.424264) -- (-1.5,-.424264-.424264-.5) -- (-1.5-.424264-.424264,-.424264-.424264);
\draw [lightgray!50, fill=lightgray!50] (-1.075736+.424264,-.424264-.424264-1) -- (-1.5,-.424264-.424264-.5) -- (-1.5-.424264-.424264,-.424264-.424264-1);
\draw [lightgray!50, fill=lightgray!50] (1.5+.424264+.424264,-.424264-.424264-1) -- (1.5-.424264-.424264,-.424264-.424264-1) -- (1.5+.424264+.424264,-.424264-.424264-2) -- (1.5-.424264-.424264,-.424264-.424264-2);
\draw [lightgray!50, fill=lightgray!50] (1.5+.424264+.424264,.424264+.424264) -- (1.5-.424264-.424264,.424264+.424264) -- (1.5,0);
\draw [lightgray!50, fill=lightgray!50] (1.5+.424264+.424264,.424264+.424264) -- (1.5-.424264-.424264,.424264+.424264) -- (1.5-.424264-.424264,1.424264+.424264) -- (1.5+.424264+.424264,1.424264+.424264);
\draw [lightgray!50, fill=lightgray!50] (1.5+.424264+.424264,-.424264-.424264) -- (1.5-.424264-.424264,-.424264-.424264) -- (1.5,0);
\draw [lightgray!50, fill=lightgray!50] (1.5+.424264+.424264,-.424264-.424264) -- (1.5,-.424264-.424264-.5) -- (1.5-.424264-.424264,-.424264-.424264);
\draw [lightgray!50, fill=lightgray!50] (1.5+.424264+.424264,-.424264-.424264-1) -- (1.5,-.424264-.424264-.5) -- (1.5-.424264-.424264,-.424264-.424264-1);
\draw [thick] [dotted] [fill=white] (-4.5,0) circle (.6 cm);
%.3 times sqrt(2) is .424264
\draw [thick] (-4.5-.424264-.424264,-.424264-.424264-1) -- (-4.5-.424264-.424264,-.424264-.424264-2);
\draw [thick] (-4.075736+.424264,-.424264-.424264-1) -- (-4.075736+.424264,-.424264-.424264-2);
\draw [thick] (-4.075736+.424264,-.424264-.424264-1) -- (-4.5-.424264-.424264,-.424264-.424264);
\draw [thick] (-4.075736+.424264,-.424264-.424264) -- (-4.5-.424264-.424264,-.424264-.424264-1);
\draw [thick] (-4.5+.424264+.424264,.424264+.424264) -- (-4.5-.424264-.424264,1+.424264+.424264);
\draw [thick] (-4.5-.424264-.424264,.424264+.424264) -- (-4.5+.424264+.424264,1+.424264+.424264);
\draw [thick] (-4.5+.424264,.424264) -- (-4.075736+.424264,.424264+.424264);
\draw [thick] (-1.5+.424264+.424264,.424264+.424264) -- (-1.5+.424264+.424264,1.424264+.424264);
\draw [thick] (-1.5-.424264-.424264,1.424264+.424264) -- (-1.075736-.424264-.424264-.424264,.424264+.424264);
\draw [thick] (-4.5-.424264,.424264) -- (-4.5-.424264-.424264,.424264+.424264);
\draw [thick] (-4.5-.424264,-.424264) -- (-4.5-.424264-.424264,-.424264-.424264);
\draw [thick] (-4.5+.424264,-.424264) -- (-4.5+.424264+.424264,-.424264-.424264);
\draw [thick] (-4.7,-.35) -- (-4.7,.35);
\draw [thick] (-4.7,-.35) -- (-4.3,-.35);
\draw [thick] [dotted] (-1.5,0) [fill=white] circle (.6 cm);
\draw [thick] (-1.075736+.424264,-.424264-.424264-2) -- (-1.5-.424264-.424264,-.424264-.424264-1);
\draw [thick] (-1.075736+.424264,-.424264-.424264) -- (-1.5-.424264-.424264,-.424264-.424264-1);
\draw [thick] (-1.5-.424264-.424264,-.424264-.424264) -- (-1.075736+.424264,-.424264-.424264-1);
\draw [thick] (-1.075736+.424264,-.424264-.424264-1) -- (-1.5-.424264-.424264,-.424264-.424264-2);
\draw [thick] (-1.5+.424264,.424264) -- (-1.075736+.424264,.424264+.424264);
\draw [thick] (-1.5-.424264,.424264) -- (-1.5-.424264-.424264,.424264+.424264);
\draw [thick] (-1.5-.424264,-.424264) -- (-1.5-.424264-.424264,-.424264-.424264);
\draw [thick] (-1.5+.424264,-.424264) -- (-1.5+.424264+.424264,-.424264-.424264);
\draw [thick] (-1.3,-.35) -- (-1.3,.35);
\draw [thick] (-1.7,-.35) -- (-1.3,-.35);
\draw [thick] [dotted] (1.5,0) [fill=white] circle (.6 cm);
\draw [thick] (1.5+.424264+.424264,-.424264-.424264-2) -- (1.5-.424264-.424264,-.424264-.424264-1);
\draw [thick] (1.5+.424264+.424264,-.424264-.424264) -- (1.5-.424264-.424264,-.424264-.424264-1);
\draw [thick] (1.5-.424264-.424264,-.424264-.424264) -- (1.5+.424264+.424264,-.424264-.424264-1);
\draw [thick] (1.5+.424264+.424264,-.424264-.424264-1) -- (1.5-.424264-.424264,-.424264-.424264-2);
\draw [thick] (1.5+.424264,.424264) -- (1.5+.424264+.424264,.424264+.424264);
\draw [thick] (1.5+.424264+.424264,1.424264+.424264) -- (1.5+.424264+.424264,.424264+.424264);
\draw [thick] (1.5-.424264,.424264) -- (1.5-.424264-.424264,.424264+.424264);
\draw [thick] (1.5-.424264-.424264,1.424264+.424264) -- (1.5-.424264-.424264,.424264+.424264);
\draw [thick] (1.5-.424264,-.424264) -- (1.5-.424264-.424264,-.424264-.424264);
\draw [thick] (1.5+.424264,-.424264) -- (1.5+.424264+.424264,-.424264-.424264);
\draw [thick] (1.3,-.35) -- (1.3,.35);
\draw [thick] (1.3,-.35) -- (1.7,-.35);
\draw [thick] [->] [>=angle 90] (-0.5,-0.35) -- (0.5,-0.35);
\draw [thick] [->] [>=angle 90] (-3.5,-0.35) -- (-2.5,-0.35);
\end{tikzpicture}
\caption{First, an ambient isotopy called an $\Omega.5$ move transforms $D$ into $D'$. The direction of rotation is chosen to ``unwind'' the upper crossing. Then a mutation of $D'$ yields $D''$, which has the same adjusted Goeritz matrix as $D$.}
\label{figeleven}
\end{figure}

If $i \neq j$ and $C_{ij}$ includes crossings that do not appear on a single two-stranded braid, then there are crossings in $C_{ij}$ separated by a tangle, as illustrated on the left in Figure \ref{figeleven}. A kind of ambient isotopy called an $\Omega.5$ move provides a new diagram $D'$ of the same link type as $D$; $D'$ is pictured in the middle of the figure. As with the $\Omega.4$ move, the portion of the link pictured in Figure \ref{figeleven} is rotated in 3-space without cutting the connections to the rest of the link; in Figure \ref{figeleven} these connections appear at both the top and the bottom. In $D'$ the two crossings from $C_{ij}$ appear consecutively on a two-stranded braid. In general $G^{adj}(D,\sigma) \neq G^{adj}(D',\sigma')$, but an elementary mutation of the second kind produces a diagram $D''$ with $G^{adj}(D'',\sigma'') = G^{adj}(D,\sigma)$. $D''$ is pictured on the right in Figure \ref{figeleven}.

After applying enough of the modifications discussed above, we ultimately obtain a diagram $\widehat{D}$ that is Conway equivalent to $D$, with a shading $\widehat{\sigma}$ which satisfies parts 1, 2, 3 and 5 of Definition \ref{strongnormaldiagram}. If part 4 of Definition \ref{strongnormaldiagram} is not satisfied, then there are consecutive crossings between two unshaded regions, with opposite Goeritz indices. Such a pair of crossings may be removed with an $\Omega.2$ move, as illustrated in Figure \ref{figtwelve}. 

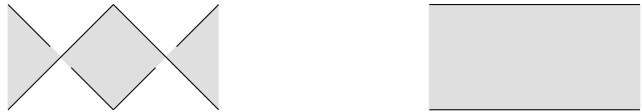
\begin{figure} [bht]
\centering
\begin{tikzpicture} [scale=0.7]
\draw [lightgray!50, fill=lightgray!50] (4,0) -- (4,2) -- (5,1);
\draw [lightgray!50, fill=lightgray!50] (6,0) -- (5,1) -- (6,2) -- (7,1);
\draw [lightgray!50, fill=lightgray!50] (8,0) -- (7,1) -- (8,2);
\draw [lightgray!50, fill=lightgray!50] (12,0) -- (12,2) -- (16,2) -- (16,0);
\draw [thick] (4,0) -- (6,2);
\draw [thick] (6,2) -- (8,0);
\draw [thick] (4,2) -- (4.8,1.2);
\draw [thick] (5.2,0.8) -- (6,0);
\draw [thick] (6,0) -- (6.8,0.8);
\draw [thick] (7.2,1.2) -- (8,2);
\draw [thick] (12,2) -- (16,2);
\draw [thick] (12,0) -- (16,0);
\end{tikzpicture}
\caption{An $\Omega.2$ move.}
\label{figtwelve}
\end{figure}

The result is a diagram $\widehat{D}'$ of the same link type as $\widehat{D}$, which does not include either of the two original crossings. The contributions of the two original crossings to $G(\widehat{D},\widehat{\sigma})$ cancel, so $G(\widehat{D}',\widehat{\sigma}')=G(\widehat{D},\widehat{\sigma})$. Also, it is clear that even though the shaded checkerboard graphs of $(\widehat{D},\widehat{\sigma})$ and $(\widehat{D}',\widehat{\sigma}')$ are not isomorphic, they have the same number of connected components; hence $G^{adj}(\widehat{D},\widehat{\sigma})=G^{adj}(\widehat{D}',\widehat{\sigma}')$.
\end{proof}

Here is a useful property of strongly normal shadings.

\begin{proposition}
\label{simplecon}
If $\sigma$ is a strongly normal shading of a link diagram $D$ then every unshaded complementary region is simply connected.
\end{proposition}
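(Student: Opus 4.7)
The plan is to prove the contrapositive: if some unshaded region $R$ is not simply connected, then $\Gamma_s(D,\sigma)$ is disconnected, contradicting property 2 of Definition~\ref{strongnormaldiagram}.

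Viewed as an open subset of $S^2$, the region $R$ fails to be simply connected precisely when its complement $S^2 \setminus R$ has two or more connected components $C_1, C_2, \dots$. Pick two of them, say $C_1$ and $C_2$. Since they are disjoint compact subsets of $S^2$ and $R$ is open, a small closed neighborhood of $C_1$ inside $R \cup C_1$ has boundary contained in $R$; by choosing this neighborhood carefully (for instance, as a regular neighborhood of $C_1$ in $S^2$), one component of its boundary is a simple closed curve $\gamma \subset R$ separating $C_1$ from $C_2$. By the Jordan curve theorem $\gamma$ bounds two closed disks $A$ and $B$ in $S^2$ with $C_1 \subseteq A$ and $C_2 \subseteq B$. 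Since $\gamma$ lies in the open unshaded region $R$, it meets no crossing of $D$ and no shaded region, so each shaded region and each crossing of $D$ lies entirely in $A$ or entirely in $B$.

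The next step is to verify that both $A$ and $B$ contain a shaded region. Each component $C_i$ must meet $\partial R$, for otherwise $C_i$ would be clopen in the connected space $S^2$ and so equal to $S^2$, forcing $R=\emptyset$. Near any point of $C_i \cap \partial R$, the checkerboard condition forces a shaded region to appear on the non-$R$ side of the universe graph of $D$, and that shaded region must lie in the same component $C_i$. Hence $C_1$ contributes a shaded region to $A$ and $C_2$ contributes one to $B$.

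Finally, every edge of $\Gamma_s(D,\sigma)$ corresponds to a crossing of $D$ and joins the two shaded regions incident at that crossing. A crossing lying in $A$ has both of its incident shaded regions in $A$, and likewise for $B$, so $\Gamma_s(D,\sigma)$ has no edge from an $A$-vertex to a $B$-vertex. Combined with the existence of vertices of $\Gamma_s(D,\sigma)$ on both sides of $\gamma$, this shows $\Gamma_s(D,\sigma)$ is disconnected, i.e., $\beta_s(D,\sigma) \geq 2$, contradicting strong normality. The only delicate step is the topological construction of the separating curve $\gamma$; once that is in hand, the combinatorial consequence for $\Gamma_s(D,\sigma)$ is immediate.
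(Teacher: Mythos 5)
Your proof is correct, and it rests on the same core fact as the paper's: a non-simply-connected unshaded region would separate the shaded regions into two nonempty classes that no edge of $\Gamma_s(D,\sigma)$ can join, contradicting property 2 of Definition~\ref{strongnormaldiagram}. The paper argues in the opposite direction: it picks shaded regions $S$ and $S'$ incident on distinct components of $\partial R_i$, uses connectedness of $\Gamma_s(D,\sigma)$ to produce an arc from $S$ to $S'$ lying in the union of the shaded regions and the crossings, and declares this incompatible with $S$ and $S'$ touching different components of $\partial R_i$ --- leaving the underlying separation argument implicit. Your contrapositive formulation supplies exactly that missing step, via the simple closed curve $\gamma \subset R$ and the Jordan curve theorem, and your observation that every crossing (hence every edge of $\Gamma_s(D,\sigma)$) lies entirely on one side of $\gamma$ is the cleanest way to see the disconnection; in that respect your write-up is the more complete of the two. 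One small point to make explicit: by passing to $S^2$ you are really proving that $S^2 \setminus R$ is connected, which is equivalent to planar simple connectivity of $R$ only because $R$ is bounded; that in turn uses property 1 of Definition~\ref{strongnormaldiagram} (the unbounded region is shaded), a hypothesis the paper invokes explicitly and you use only tacitly.
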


\begin{proof}
Consider an unshaded region $R_i$. As $R_i$ is not the unbounded region, its boundary $\partial R_i$ consists solely of arcs of $D$. Of course each of these arcs separates $R_i$ from a shaded region. If $R_i$ is not simply connected then $\partial R_i$ is not connected, so there are shaded regions $S$ and $S'$ which are incident on different connected components of $\partial R_i$. As $\sigma$ is a strongly normal shading of $D$, $\Gamma_s(D,\sigma)$ is connected; so there is a path connecting the vertices of $\Gamma_s(D,\sigma)$ corresponding to $S$ and $S'$. Such a path provides an arc from a point of $S$ to a point of $S'$, which is entirely contained in the union of the shaded regions and the crossings of $D$. But the existence of such an arc violates the assumption that $S$ and $S'$ are incident on distinct connected components of $\partial R_i$.
\end{proof}

\section{Graph imbeddings and link diagrams}
\label{sec:isotop}

Let $\Gamma$ be a planar graph, i.e., a graph that can be imbedded in the plane, with vertices represented by points and edges represented by pairwise disjoint, piecewise smooth arcs. We say two imbeddings of $\Gamma$ are \emph{locally isotopic} if either imbedding can be obtained from the other through a finite sequence of \emph{local isotopies}, i.e., transformations in which a small portion of the imbedded graph is shifted a small distance, as indicated in Figure~\ref{figthirteen}. (The imbedding is not changed outside the dotted circle.) Notice that the complementary regions of a graph imbedding correspond directly to the complementary regions of a locally isotopic graph imbedding. 

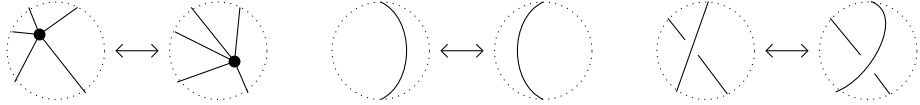
\begin{figure} [bht]
\centering
\begin{tikzpicture} [scale=.72]
\draw [thick] [dotted] (-5,0) circle (.9 cm);
\draw [thick] [dotted] (-2,0) circle (.9 cm);
\draw [thick] [dotted] (1,0) circle (.9 cm);
\draw [thick] [dotted] (4,0) circle (.9 cm);
\draw [thick] [dotted] (7,0) circle (.9 cm);
\draw [thick] [dotted] (10,0) circle (.9 cm);
\draw [thick] [black, fill=black] (-5.3,0.3) circle (.1 cm);
\draw [thick] [black, fill=black] (-1.7,-0.2) circle (.1 cm);
\draw [thick] [<->] [>=angle 90] (-3.9,0) -- (-3.1,0);
\draw [thick] [<->] [>=angle 90] (2.1,0) -- (2.9,0);
\draw [thick] [<->] [>=angle 90] (8.1,0) -- (8.9,0);
\draw [thick] (-5.3,0.3) -- (-5.8,0.35);
\draw [thick] (-5.3,0.3) -- (-5.5,0.8);
\draw [thick] (-5.3,0.3) -- (-4.6,0.8);
\draw [thick] (-5.3,0.3) -- (-4.45,-0.78);
\draw [thick] (-5.3,0.3) -- (-5.76,-0.58);
\draw [thick] (-1.7,-0.2) -- (-2.8,0.35);
\draw [thick] (-1.7,-0.2) -- (-2.5,0.8);
\draw [thick] (-1.7,-0.2) -- (-1.6,0.8);
\draw [thick] (-1.7,-0.2) -- (-1.45,-0.78);
\draw [thick] (-1.7,-0.2) -- (-2.76,-0.58);
\draw [thick] (1,0.9) to [out=335,in=25] (1,-0.9);
\draw [thick] (4,0.9) to [out=205,in=155] (4,-0.9);
\draw [thick] (7.05,0.91) -- (6.46,-0.76);
\draw [thick] (6.86,-0.08) -- (7.4,-0.8);
\draw [thick] (6.3,0.6) -- (6.62,0.2);
\draw [thick] (10.05,0.91) to [out=340,in=20] (9.4,-0.76);
\draw [thick] (10.115,-0.42) -- (10.4,-0.8);
\draw [thick] (9.3,0.6) -- (9.86,-0.08);
\end{tikzpicture}
\caption{Local isotopies of graph imbeddings and link diagrams.}
\label{figthirteen}
\end{figure}

We use the terminology of local isotopy both for graph imbeddings and for link diagrams. (A link diagram is simply a graph imbedding in the plane, with supplementary information that specifies the undercrossing arc at each vertex; a local isotopy preserves the undercrossing information, in addition to making a small change in the diagram.) If two link diagrams are locally isotopic then it is obvious that they represent the same link type. (That is, local isotopy of link diagrams in $\mathbb{R}^2$ implies ambient isotopy of links in $\mathbb{R}^3$.) The reason is that we can build a link in $\mathbb{R}^3$ from a diagram, by attaching small $\cup-$shaped arcs for the underpasses. A local isotopy of the diagram may then be realized by a local perturbation of the associated link in $\mathbb{R}^3$. If a link diagram is equipped with a shading then a locally isotopic diagram inherits the shading directly. 

We use local isotopies to relate link diagrams to graph imbeddings in Theorem~\ref{corr} below. Before stating the theorem, we introduce some notation.

\begin{definition}
A \emph{nonzero integer weighting} of a simple graph $\Gamma$ is a function $w:E(\Gamma) \to \mathbb{Z}$ with $w(e) \neq 0$ $\forall e \in E(\Gamma)$.
\end{definition}

\begin{definition}
Let $\mathcal{IG}$ denote the set of ordered pairs $(w,\lambda)$ where $w$ is a nonzero integer weighting of a planar simple graph $\Gamma$ and $\lambda$ is a local isotopy class of imbeddings of $\Gamma$ in the plane. 
\end{definition}
\begin{definition}
Let $\mathcal{IL}$ denote the set of local isotopy classes of link diagrams with strongly normal shadings. 
\end{definition}
\begin{theorem}
\label{corr}
There is a bijection $\mathcal{D}:\mathcal{IG} \to \mathcal{IL}$ with these properties:
\begin{itemize}
\item If $(w,\lambda) \in \mathcal{IG}$ and $D \in \mathcal{D}(w,\lambda)$ then $D$ is a link diagram with a strongly normal shading $\sigma$ such that $\Gamma_u(D,\sigma)$ is isomorphic to the graph obtained from $\Gamma$ by replacing each edge $e \in E(\Gamma)$ with $|w(e)|$ parallel edges.
\item If $D \in \mathcal{D}(w,\lambda)$ and $e \in E(\Gamma)$ then all of the crossings in $D$ corresponding to $e$ have the same Goeritz index, $\eta=w(e)/|w(e)|$.
\end{itemize}
\end{theorem}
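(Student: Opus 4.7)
The plan is to construct $\mathcal{D}$ and a candidate inverse $\mathcal{E}:\mathcal{IL}\to\mathcal{IG}$ explicitly, and to check they are mutually inverse; the two bulleted properties will then be built directly into the definition of $\mathcal{D}$.

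To define $\mathcal{D}(w,\lambda)$, I would fix any representative imbedding of $\Gamma$ in the class $\lambda$, thicken each vertex into a small disk and each edge $e$ into a narrow strip that meets the vertex disks along short arcs in the cyclic order given by the imbedding, and insert into the strip for $e$ a two-stranded braid of $|w(e)|$ crossings. I would then declare the vertex disks to be unshaded, while the $|w(e)|-1$ diamond regions between consecutive braid crossings, together with the faces of the imbedding (with the strips' side regions attached to them), are declared shaded. The over/under convention in each braid is chosen so that every crossing on $e$'s braid has Goeritz index $w(e)/|w(e)|$ with respect to this shading. Parts 3, 4, and 5 of Definition \ref{strongnormaldiagram} are immediate from the construction of the braids; part 2 holds because the faces of the imbedding are linked through the strips' side regions into a single connected shaded region; and part 1 holds because the unbounded face of the imbedding of $\Gamma$ is a face, hence shaded. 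A local isotopy of the imbedding of $\Gamma$ in the plane induces a local isotopy of the constructed diagram, so the local isotopy class of $\mathcal{D}(w,\lambda)$ depends only on $(w,\lambda)$, and the two bulleted properties are immediate.

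To define $\mathcal{E}(D,\sigma)$, form the simple graph $\Gamma$ by taking $\Gamma_u(D,\sigma)$ and identifying parallel edges. Parts 3, 4, and 5 of Definition \ref{strongnormaldiagram} guarantee that $\Gamma$ is simple and that the weighting $w(e) = \eta(c)\cdot|C_{ij}|$ (for any $c\in C_{ij}$, where $e$ is the edge of $\Gamma$ arising from $C_{ij}$) is well defined and nonzero. To imbed $\Gamma$, pick one point $p_i$ in each unshaded region $R_i$ and, for each edge of $\Gamma$ corresponding to a braid between $R_i$ and $R_j$, draw a piecewise smooth arc from $p_i$ to $p_j$ that crosses that braid transversally. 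Proposition \ref{simplecon} says each $R_i$ is simply connected, so inside $R_i$ the arcs emanating from $p_i$ can be chosen pairwise disjoint and arranged in the cyclic order in which their braids meet $R_i$; this determines a local isotopy class of imbeddings of $\Gamma$ in the plane.

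The main obstacle will be verifying that $\mathcal{E}$ is well defined on $\mathcal{IL}$ and that $\mathcal{D}$ and $\mathcal{E}$ are mutual inverses. Both reduce to the standard planar isotopy-extension fact that any two piecewise smooth arcs in a simply connected planar region sharing endpoints on the boundary are isotopic rel boundary, which lets me move the base points $p_i$ and reroute arcs through local isotopies. Given this, $\mathcal{E}\circ\mathcal{D}(w,\lambda)=(w,\lambda)$ because the braids inserted by $\mathcal{D}$ collapse back to exactly their originating edges with their originating weights, and the imbedding of $\Gamma$ that $\mathcal{E}$ recovers lies in the original class $\lambda$. Conversely, $\mathcal{D}\circ\mathcal{E}(D,\sigma)$ is locally isotopic to $(D,\sigma)$ because running $\mathcal{D}$ along the imbedding produced by $\mathcal{E}$ reinserts a braid of precisely the right length and sign at each edge of $\Gamma$, reproducing $D$ up to local isotopy.
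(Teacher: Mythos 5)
Your proposal is correct and follows essentially the same route as the paper: both build $\mathcal{D}$ by thickening vertices of an imbedded representative of $\lambda$ into unshaded disks and edges into strips carrying two-stranded braids of $|w(e)|$ crossings of sign $w(e)/|w(e)|$, and both recover the inverse by placing a point in each unshaded region, joining points through the braids, and invoking Proposition \ref{simplecon} to pin down the local isotopy class. You are somewhat more explicit than the paper in verifying the five conditions of Definition \ref{strongnormaldiagram} and in justifying that the two maps are mutually inverse, but these are elaborations of the same argument rather than a different one.
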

\begin{proof}
Suppose $(w,\lambda) \in \mathcal{IG}$. We build a shaded link diagram from an imbedding included in $\lambda$, as follows. First, replace each vertex with a small, unshaded disk. Then replace each edge with a narrow, unshaded strip. We say ``small'' and ``narrow'' merely to guarantee that there is no intersection between any of the disks and strips, except where a strip representing an edge is attached to a disk representing a vertex incident on that edge. 

A representative of $\mathcal{D}(w,\lambda)$ is constructed from this assembly of disks and strips in two steps. First, erase the two arcs at the ends of each strip, leaving only the two endpoints of each arc. Retain the arcs of the disk boundaries between one strip and the next strip, and the arcs along the sides of the strips. Second, somewhere in each strip, insert a two-stranded braid of $|w(e)|$ crossings across the strip; each of these crossings should have Goeritz index $w(e)/|w(e)|$. See Figure~\ref{figfourteen} for an example. (The disks in the diagram have been drawn as rectangles, and dotted line segments indicate the erased arcs along which strips are attached to disks.)

\begin{figure} [bht]
\centering
\begin{tikzpicture} [scale=.7]
%graph
\draw [thick] [black, fill=black] (-3,10) circle (.1 cm);
\draw [thick] [black, fill=black] (-3,6) circle (.1 cm);
\draw [thick] [black, fill=black] (-4,0) circle (.1 cm);
\draw [thick] [black, fill=black] (-2,0) circle (.1 cm);
\draw [thick] [black, fill=black] (-3.5,2.3) circle (.1 cm);
\draw [thick] [black, fill=black] (-2.5,2.3) circle (.1 cm);
\draw [thick] [black, fill=black] (-3,4.5) circle (.1 cm);
\draw [thick] (-3,10) -- (-3,6);
\draw [thick] (-4,0) -- (-2,0);
\draw [thick] (-3.5,2.3) -- (-2.5,2.3);
\draw [thick] (-3,6) to [out=230,in=90] (-4,0);
\draw [thick] (-3,6) to [out=310,in=90] (-2,0);
\node at (-3.5,8) {-3};
\node at (-3,2.8) {2};
\node at (-3,0.4) {2};
\node at (-4.1,5.1) {-1};
\node at (-1.9,5.1) {-2};
\node at (-3.5,10) {$a$};
\node at (-3.5,6.2) {$b$};
\node at (-3.02,4.1) {$c$};
\node at (-3.5,1.8) {$d$};
\node at (-2.5,1.75) {$e$};
\node at (-4,-0.5) {$f$};
\node at (-2,-0.5) {$g$};
\node at (5.98,9.77) {$a$};
\node at (5.98,7.4) {$b$};
\node at (4.8,4) {$d$};
\node at (7.2,3.92) {$e$};
\node at (2,1) {$f$};
\node at (10.5,1) {$g$};
%top crossings
\draw [lightgray!50, fill=lightgray!50] (12,5) -- (12.5,5) -- (12.5,5.2) -- (7,8.5) -- (7,8);
\draw [lightgray!50, fill=lightgray!50] (16/3,9) -- (5,28/3) -- (5,26/3);
\draw [lightgray!50, fill=lightgray!50] (18/3,9) -- (17/3,28/3) -- (16/3,9) -- (17/3,26/3);
\draw [lightgray!50, fill=lightgray!50] (20/3,9) -- (19/3,28/3) -- (18/3,9) -- (19/3,26/3);
\draw [lightgray!50, fill=lightgray!50] (22/3,9) -- (21/3,28/3) -- (20/3,9) -- (21/3,26/3);
\draw [lightgray!50, fill=lightgray!50] (7.5,10) -- (4.5,10) -- (4.5,10.5) -- (7.5,10.5);
\draw [lightgray!50, fill=lightgray!50] (7,10.5) -- (7,8) -- (7.5,7.7) -- (7.5,10.5);
\draw [lightgray!50, fill=lightgray!50] (5,10.5) -- (4.5,10.5) -- (4.5,8) -- (5,8);
\draw [thick] (7,10) -- (5,10);
\draw [thick] [dotted] (7,9.55) -- (5,9.55);
\draw [thick] [dotted] (7,8) -- (5,8);
\draw [thick] [dotted] (7,8) -- (7,6.8);
\draw [thick] [dotted] (5,8) -- (5,6.8);
\draw [thick] [dotted] (1,2) -- (2,2);
\draw [thick] [dotted] (3,0) -- (3,2);
\draw [thick] [dotted] (9,0) -- (9,2);
\draw [thick] [dotted] (12,2) -- (10,2);
\draw [thick] [dotted] (5.1,3) -- (5.1,5);
\draw [thick] [dotted] (6.9,3) -- (6.9,5);
\draw [thick] (5,28/3) -- (5,10);
\draw [thick] (7,28/3) -- (7,10);
\draw [thick] (5,26/3) -- (5,8);
\draw [thick] (7,26/3) -- (7,8);
\draw [thick] (5,28/3) -- (21/4-1/20,109/12+1/20);
\draw [thick] (17/3,26/3) -- (1/20+65/12,-1/20+107/12);
\draw [thick] (5,26/3) -- (17/3,28/3);
\draw [thick] (5+2/3,28/3) -- (-1/20+21/4+2/3,1/20+109/12);
\draw [thick] (17/3+2/3,26/3) -- (1/20+65/12+2/3,-1/20+107/12);
\draw [thick] (5+2/3,26/3) -- (17/3+2/3,28/3);
\draw [thick] (5+4/3,28/3) -- (-1/20+21/4+4/3,1/20+109/12);
\draw [thick] (17/3+4/3,26/3) -- (1/20+65/12+4/3,-1/20+107/12);
\draw [thick] (5+4/3,26/3) -- (17/3+4/3,28/3);
%bottom crossings
\draw [lightgray!50, fill=lightgray!50] (12,0) -- (12,5) -- (12.5,5) -- (12.5,0);
\draw [lightgray!50, fill=lightgray!50] (0.5,0) -- (0.5,-0.5) -- (12.5,-0.5) -- (12.5,0);
\draw [lightgray!50, fill=lightgray!50] (1,-0.5) -- (0.5,-0.5) -- (0.5,4) -- (1,4);
\draw [lightgray!50, fill=lightgray!50] (6,0.5) -- (7,-0.5) -- (5,-0.5);
\draw [lightgray!50, fill=lightgray!50] (6,0.5) -- (6.5,1) -- (6,1.5) -- (5.5,1);
\draw [lightgray!50, fill=lightgray!50] (5,2.5) -- (7,2.5) -- (6,1.5);
\draw [lightgray!50, fill=lightgray!50] (2,3) -- (10,3) -- (10,2) -- (2,2);
\draw [lightgray!50, fill=lightgray!50] (2,2) -- (2,4) -- (60/19,42/19+10/3) -- (5,6.8) -- (4.5,5) -- (4.5,2);
\draw [thick] (12,0) -- (12,3);
\draw [thick] (12,0) -- (6.5,0);
\draw [thick] (1,0) -- (5.5,0);
\draw [thick] (1,2) -- (1,0);
\draw [thick] (5.5,0) -- (6.5,1);
\draw [thick] (6.5,0) -- (6.2,0.3);
\draw [thick] (5.5,1) -- (5.8,0.7);
\draw [thick] (6.5,1) -- (6.2,1.3);
\draw [thick] (5.5,2) -- (5.8,1.7);
\draw [thick] (5.5,1) -- (6.5,2);
\draw [thick] (10,2) -- (6.5,2);
\draw  [thick](5.5,2) -- (2,2);
%middle crossings
\draw [lightgray!50, fill=lightgray!50] (6,3.5) -- (7,2.5) -- (5,2.5);
\draw [lightgray!50, fill=lightgray!50] (6,3.5) -- (6.5,4) -- (6,4.5) -- (5.5,4);
\draw [lightgray!50, fill=lightgray!50] (5,5.5) -- (7,5.5) -- (6,4.5);
\draw [lightgray!50, fill=lightgray!50] (7.5,5) -- (10,5) -- (10,2.5) -- (7.5,2.5);
\draw [lightgray!50, fill=lightgray!50] (10,5) -- (7.5,6.5) -- (7.5,2.5);
\draw [lightgray!50, fill=lightgray!50] (5,6.8) -- (7,6.8) -- (10,5) -- (2.8,5);
\draw [lightgray!50, fill=lightgray!50] (4.5,8) -- (5,8) -- (60/19,42/19+10/3) -- (5,6.8) -- (1,4) -- (0.5,4);
\draw [thick] [fill=white] (5.7,5.55) -- (5.7,6.15) -- (6.3,6.15) -- (6.3,5.55) -- (5.7,5.55);
\node at (5.98,5.85) {$c$};
\draw [thick] (5.5,3) -- (6.5,4);
\draw [thick] (6.5,3) -- (6.2,3.3);
\draw [thick] (5.5,4) -- (5.8,3.7);
\draw [thick] (6.5,4) -- (6.2,4.3);
\draw [thick] (5.5,5) -- (5.8,4.7);
\draw [thick] (5.5,4) -- (6.5,5);
\draw [thick] (7.5,5) -- (6.5,5);
\draw [thick] (7.5,5) -- (7.5,3);
\draw [thick] (6.5,3) -- (7.5,3);
\draw [thick] (5.5,3) -- (4.5,3);
\draw [thick] (4.5,5) -- (4.5,3);
\draw [thick] (4.5,5) -- (5.5,5);
%right crossings
\draw [lightgray!50, fill=lightgray!50] (10,3) -- (10.5,3.5) -- (10,4);
\draw [lightgray!50, fill=lightgray!50] (11,3) -- (10.5,3.5) -- (11,4) -- (11.5,3.5);
\draw [lightgray!50, fill=lightgray!50] (12.5,4.5) -- (12.5,2.5) -- (11.5,3.5);
\draw [thick] (7,8) -- (12,5);
\draw [thick] (7,6.8) -- (10,5);
\draw [thick] (7,6.8) -- (5,6.8);
\draw [thick] (10,3) -- (10,2);
\draw [thick] (10,3) -- (11,4);
\draw [thick] (10,4) -- (10.3,3.7);
\draw [thick] (11,3) -- (10.7,3.3);
\draw [thick] (11,3) -- (12,4);
\draw [thick] (11,4) -- (11.3,3.7);
\draw [thick] (12,3) -- (11.7,3.3);
\draw [thick] (12,4) -- (12,5);
\draw [thick] (10,4) -- (10,5);
%left crossing
\draw [thick] (5,6.8) -- (1,4);
\draw [thick] (1,2) -- (1,4);
\draw [thick] (2.75,5) -- (2,4);
\draw [thick] (3.5,6) -- (5,8);
\draw [thick] (2,2) -- (2,4);
\end{tikzpicture}
\caption{On the left is a plane imbedding of a simple graph $\Gamma$. The isotopy class is $\lambda$, and a nonzero integer weighting $w$ is indicated. On the right is a diagram from $\mathcal{D}(w,\lambda)$. Dotted segments indicate arcs where strips (which correspond to edges) attach to disks (which correspond to vertices).}
\label{figfourteen}
\end{figure}
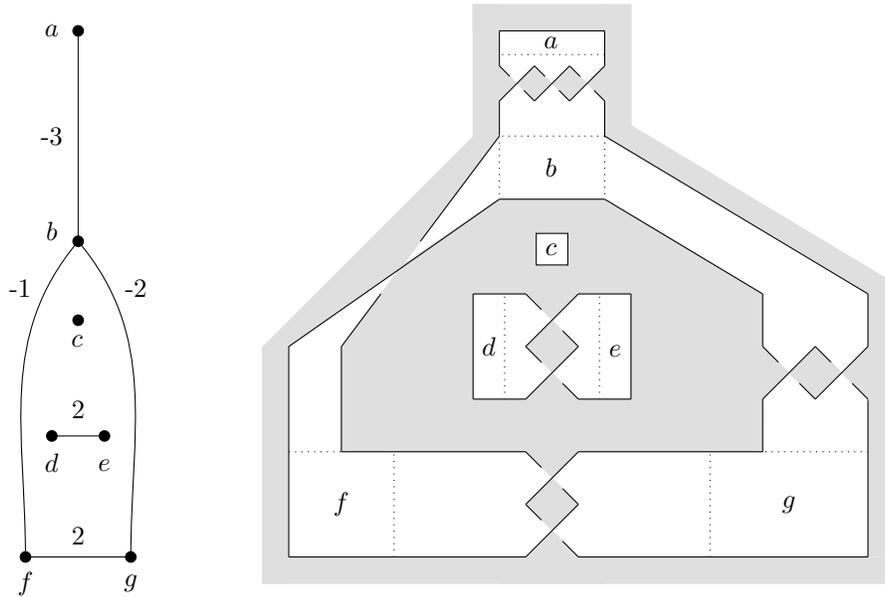

It is clear that if a local isotopy is applied to the imbedding used in the construction, the resulting link diagram will also have a local isotopy applied to it. That is, we have a well-defined function $\mathcal{D}:\mathcal{IG} \to \mathcal{IL}$.

It is not difficult to construct the inverse bijection $\mathcal{D}^{-1}:\mathcal{IL} \to \mathcal{IG}$. Suppose $D$ is a link diagram with a strongly normal shading $\sigma$. Let $\Gamma$ be the simple graph obtained from $\Gamma_u(D,\sigma)$ by replacing each nonempty set of parallel edges with a single edge. Then there are a local isotopy class $\lambda$ of imbeddings of $\Gamma$ in the plane and a nonzero integer weighting $w$ of $\Gamma$, defined as follows. Place a vertex in each unshaded region $R_i$. When $R_i$ and $R_j$ are unshaded regions that share one or more crossings, draw one edge $e_{ij}$ from the $R_i$ vertex to the $R_j$ vertex, and define $w(e_{ij})$ to be the total of the Goeritz indices of the crossings shared by $R_i$ and $R_j$. This edge $e_{ij}$ should be completely contained in $R_i \cup R_j$, except for one point which is a shared crossing. The fact that the crossings between $R_i$ and $R_j$ occur on a single two-stranded braid guarantees that up to local isotopy, it does not matter which crossing appears on $e_{ij}$. If $j \neq k$ and $e_{ij},e_{ik}$ are both edges, then $e_{ij} \cap e_{ik}$ should include only the $R_i$ vertex. Proposition~\ref{simplecon} implies that there is only one local isotopy class of such imbeddings of $\Gamma$, so we have constructed a well-defined function $\mathcal{IL} \to \mathcal{IG}$. It is clear that both compositions of this function with $\mathcal{D}$ are identity maps.
\end{proof}

\section{The imbeddings of a planar graph}
\label{sec:imbed}

In this section we briefly discuss the connections among the various imbeddings of a given planar graph. The original theory is due to Whitney~\cite{W1,W}. Ore gave a very thorough account in his book on what was then the four-color conjecture~\cite{O}; see also Greene~\cite{Gre}, Lipson~\cite{Lip}, and Mohar and Thomassen \cite{MT}. 

There are four kinds of operations that connect all the imbeddings of a planar graph $\Gamma$ to each other. We have already encountered the simplest of the four, local isotopy. The second kind of operation is also very simple: if $\Gamma$ is a disconnected graph then we may alter an imbedding of $\Gamma$ in the plane by moving the imbedded image of a connected component $C$ of $\Gamma$ from one complementary region to another. If necessary, we can use local isotopies to reduce the size of the imbedded image of $C$ before moving it into a different region.

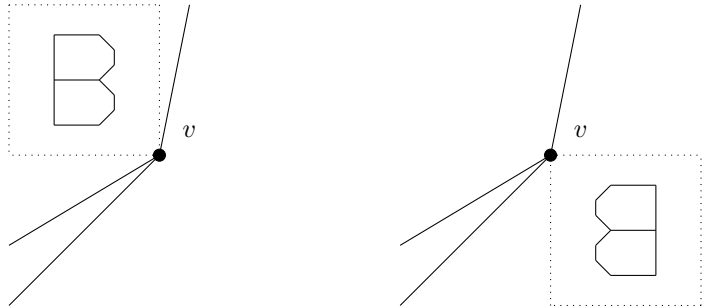
\begin{figure} [bht]
\centering
\begin{tikzpicture} [scale=.4]
\node at (6,0.8) {$v$};
\draw [thick] [black, fill=black] (5,0) circle (.2 cm);
\draw [thick] [dotted] (5,0) -- (0,0);
\draw [thick] [dotted] (0,5) -- (0,0);
\draw [thick] [dotted] (0,5) -- (5,5);
\draw [thick] [dotted] (5,5) -- (5,0);
\draw [thick] (5,0) -- (6,5); 
\draw [thick] (5,0) -- (0,-3);
\draw [thick] (5,0) -- (0,-5);
\draw [thick] (1.5,1) -- (1.5,4);
\draw [thick] (3,4) -- (1.5,4);
\draw [thick] (3,4) -- (3.5,3.5);
\draw [thick] (3.5,3) -- (3.5,3.5);
\draw [thick] (3.5,3) -- (3,2.5);
\draw [thick] (1.5,2.5) -- (3,2.5);
\draw [thick] (3.5,2) -- (3,2.5);
\draw [thick] (3.5,2) -- (3.5,1.5);
\draw [thick] (3,1) -- (3.5,1.5);
\draw [thick] (3,1) -- (1.5,1);
\node at (19,0.8) {$v$};
\draw [thick] [black, fill=black] (18,0) circle (.2 cm);
\draw [thick] [dotted] (18,0) -- (23,0);
\draw [thick] [dotted] (23,-5) -- (23,0);
\draw [thick] [dotted] (23,-5) -- (18,-5);
\draw [thick] [dotted] (18,-5) -- (18,0);
\draw [thick] (18,0) -- (19,5); 
\draw [thick] (18,0) -- (13,-3);
\draw [thick] (18,0) -- (13,-5);
\draw [thick] (21.5,-1) -- (21.5,-4);
\draw [thick] (21.5,-4) -- (20,-4);
\draw [thick] (19.5,-3.5) -- (20,-4);
\draw [thick] (19.5,-3.5) -- (19.5,-3);
\draw [thick] (20,-2.5) -- (19.5,-3);
\draw [thick] (20,-2.5) -- (21.5,-2.5);
\draw [thick] (20,-2.5) -- (19.5,-2);
\draw [thick] (19.5,-1.5) -- (19.5,-2);
\draw [thick] (19.5,-1.5) -- (20,-1);
\draw [thick] (21.5,-1) -- (20,-1);
\end{tikzpicture}
\caption{Swinging around a vertex.}
\label{figfifteen}
\end{figure}

The third kind of operation might be called ``swinging.'' Suppose $v \in V(\Gamma)$ and there is a disk in the plane whose boundary intersects an imbedding of $\Gamma$ only at $v$. Then we may swing the portion of the imbedding contained in the disk around $v$ from one complementary region to another, as indicated in Figure~\ref{figfifteen}. Once again, if necessary we can use local isotopies to change the shape or size of the disk before swinging it into a different region.

The fourth kind of imbedding operation is a flip. If $v,w$ are two points and there is a disk whose boundary intersects the image of the imbedding in a subset of $\{v,w\}$ then the portion of the imbedding inside the disk may be rotated through an angle of $\pi$, with the line through $v$ and $w$ as axis. (It may be necessary to use local isotopies first, to make sure the disk is symmetric with respect to the line through $v$ and $w$.) See Figure~\ref{figsixteen}, where it is assumed that $v,w \in V(\Gamma)$. This assumption is not necessary. For instance we may flip a disk around $\{v,w\}$ if $v$ is not included in the imbedded image of $\Gamma$, so long as the intersection of the disk boundary with the image of $\Gamma$ is $\emptyset$ or $\{w\}$. We may also flip around $\{v,w\}$ if $v$ or $w$ is a point on an edge of $\Gamma$.

\begin{figure} [bht]
\centering
\begin{tikzpicture} [scale=.4]
\draw [thick] [dotted] (-2.5,2.475) circle (2.5 cm);
\node at (-2.5,5.75) {$v$};
\node at (-2.5,-0.8) {$w$};
\draw [thick] [black, fill=black] (-2.5,0) circle (.2 cm);
\draw [thick] [black, fill=black] (-2.5,5) circle (.2 cm);
%\draw [thick] [dotted] (0,0) -- (-5,0);
%\draw [thick] [dotted] (-5,5) -- (-5,0);
%\draw [thick] [dotted] (-5,5) -- (0,5);
%\draw [thick] [dotted] (0,5) -- (0,0);
\draw [thick] (-2.5,5) -- (-1,6); 
\draw [thick] (-2.5,5) -- (0,6); 
\draw [thick] (-2.5,5) -- (-4,6);
\draw [thick] (-2.5,0) -- (-5,-1);
\draw [thick] (-2.5,0) -- (-4,-1);
\draw [thick] (-3.5,1) -- (-3.5,4);
\draw [thick] (-2,4) -- (-3.5,4);
\draw [thick] (-2,4) -- (-1.5,3.5);
\draw [thick] (-1.5,3) -- (-1.5,3.5);
\draw [thick] (-1.5,3) -- (-2,2.5);
\draw [thick] (-3.5,2.5) -- (-2,2.5);
\draw [thick] (-1.5,2) -- (-2,2.5);
\draw [thick] (-1.5,2) -- (-1.5,1.5);
\draw [thick] (-2,1) -- (-1.5,1.5);
\draw [thick] (-2,1) -- (-3.5,1);
\node at (12.5,5.75) {$v$};
\node at (12.5,-0.8) {$w$};
\draw [thick] [black, fill=black] (12.5,0) circle (.2 cm);
\draw [thick] [black, fill=black] (12.5,5) circle (.2 cm);
\draw [thick] [dotted] (12.5,2.475) circle (2.5 cm);
%\draw [thick] [dotted] (10,0) -- (15,0);
%\draw [thick] [dotted] (10,5) -- (10,0);
%\draw [thick] [dotted] (10,5) -- (15,5);
%\draw [thick] [dotted] (15,5) -- (15,0);
\draw [thick] (12.5,5) -- (14,6); 
\draw [thick] (12.5,5) -- (11,6);
\draw [thick] (12.5,5) -- (15,6);
\draw [thick] (12.5,0) -- (10,-1);
\draw [thick] (12.5,0) -- (11,-1);
\draw [thick] (13.5,1) -- (13.5,4);
\draw [thick] (13.5,1) -- (12,1);
\draw [thick] (12,1) -- (11.5,1.5);
\draw [thick] (11.5,1.5) -- (11.5,2);
\draw [thick] (11.5,2) -- (12,2.5);
\draw [thick] (12,2.5) -- (13.5,2.5);
\draw [thick] (12,2.5) -- (11.5,3);
\draw [thick] (11.5,3) -- (11.5,3.5);
\draw [thick] (11.5,3.5) -- (12,4);
\draw [thick] (12,4) -- (13.5,4);
\end{tikzpicture}
\caption{Flipping with respect to two vertices.}
\label{figsixteen}
\end{figure}
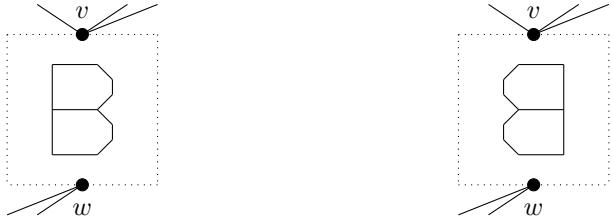

In general, there is not a one-to-one correspondence between the complementary regions of the imbeddings involved in a flip. The flip's effect is to perform a Whitney twist on the geometric dual graph, building two complementary regions of the new imbedding from pieces of two complementary regions of the original imbedding. However, if the portion of $\Gamma$ outside the disk is a single edge $e$, then there is a direct correspondence between the complementary regions of the two imbeddings. If we compose the flip with a reflection (which is also a special kind of flip), and think of the imbeddings on the 2-sphere rather than the plane, then the effect of the two flips is the same as the effect of a non-planar local isotopy, which moves $e$ across the point at infinity. See Figure \ref{figsixteena} for an example.

\begin{figure} [bht]
\centering
\begin{tikzpicture} [scale=.4]
%left
\draw [thick] [dotted] (-2.5,2.475) circle (2.5 cm);
\draw [thick] [black, fill=black] (-2.5,0) circle (.2 cm);
\draw [thick] [black, fill=black] (-4,3.5) circle (.2 cm);
\draw [thick] [black, fill=black] (-4,1.5) circle (.2 cm);
\draw [thick] [black, fill=black] (-2.5,5) circle (.2 cm);
\draw [thick] [black, fill=black] (-2.5,2.5) circle (.2 cm);
\draw [thick] (-2.5,0) -- (-4,3.5); 
\draw [thick] (-2.5,5) -- (-4,3.5); 
\draw [thick] (-4,3.5) -- (-4,1.5); 
\draw [thick] (-2.5,0) -- (-4,1.5);
\draw [thick] (-2.5,0) -- (-2.5,5);
\draw [thick] (-2.5,0) to [out=180, in=270] (-6,2.5);
\draw [thick] (-2.5,5) to [out=180, in=90] (-6,2.5);
%middle left
\draw [thick] (5.5,0) to [out=180, in=270] (2,2.5);
\draw [thick] (5.5,5) to [out=180, in=90] (2,2.5);
\draw [thick] [black, fill=black] (5.5,0) circle (.2 cm);
\draw [thick] [black, fill=black] (5.5,5) circle (.2 cm);
%\draw [thick] [dotted] (5.5,2.475) circle (2.5 cm);
\draw [thick] [black, fill=black] (7,3.5) circle (.2 cm);
\draw [thick] [black, fill=black] (7,1.5) circle (.2 cm);
%\draw [thick] [black, fill=black] (-2.5,5) circle (.2 cm);
\draw [thick] [black, fill=black] (5.5,2.5) circle (.2 cm);
\draw [thick] (5.5,0) -- (7,3.5); 
\draw [thick] (5.5,5) -- (7,3.5); 
\draw [thick] (7,3.5) -- (7,1.5); 
\draw [thick] (5.5,0) -- (7,1.5);
\draw [thick] (5.5,0) -- (5.5,5);
%middle right
\draw [thick] (12.5,0) to [out=180, in=270] (9,2.5);
\draw [thick] (12.5,5) to [out=180, in=90] (9,2.5);
\draw [thick] [black, fill=black] (12.5,0) circle (.2 cm);
\draw [thick] [black, fill=black] (12.5,5) circle (.2 cm);
\draw [thick] [dotted] (12.5,2.475) circle (4 cm);
\draw [thick] [black, fill=black] (14,3.5) circle (.2 cm);
\draw [thick] [black, fill=black] (14,1.5) circle (.2 cm);
%\draw [thick] [black, fill=black] (-2.5,5) circle (.2 cm);
\draw [thick] [black, fill=black] (12.5,2.5) circle (.2 cm);
\draw [thick] (12.5,5) -- (14,3.5); 
\draw [thick] (12.5,0) -- (14,3.5);
\draw [thick] (14,3.5) -- (14,1.5); 
\draw [thick] (12.5,0) -- (14,1.5);
\draw [thick] (12.5,0) -- (12.5,5);
%right
%\draw [thick] [dotted] (-2.5,2.475) circle (2.5 cm);
\draw [thick] [black, fill=black] (20,0) circle (.2 cm);
\draw [thick] [black, fill=black] (18.5,3.5) circle (.2 cm);
\draw [thick] [black, fill=black] (18.5,1.5) circle (.2 cm);
\draw [thick] [black, fill=black] (20,5) circle (.2 cm);
\draw [thick] [black, fill=black] (20,2.5) circle (.2 cm);
\draw [thick] (20,5) -- (18.5,3.5); 
\draw [thick] (20,0) -- (18.5,3.5);
\draw [thick] (18.5,3.5) -- (18.5,1.5); 
\draw [thick] (20,0) -- (18.5,1.5);
\draw [thick] (20,0) -- (20,5);
\draw [thick] (20,0) to [out=0, in=270] (23.5,2.5);
\draw [thick] (20,5) to [out=0, in=90] (23.5,2.5);

\end{tikzpicture}
\caption{Two flips combine to move an edge across the point at infinity.}
\label{figsixteena}
\end{figure}

\section{Goeritz equivalence $\implies$ Conway equivalence}
\label{sec:gc}

To complete the proof of Theorem \ref{main}, we need to verify that if $D$ and $D^{\prime }$ are link diagrams with the same adjusted Goeritz matrix, then $D$ can be transformed into $D^{\prime }$ using Reidemeister moves and elementary mutations. 

According to Proposition~\ref{goeritzprop}, we may presume that $G^{adj}(D,\sigma)=G^{adj}(D',\sigma')$ and $\sigma,\sigma^{\prime }$ are both strongly normal shadings. The matrix equality implies that $\Gamma_u(D,\sigma) \cong \Gamma_u(D',\sigma')$, and the isomorphism preserves the Goeritz indices of the crossings. Let $\Gamma$ be an abstract simple graph obtained from either $\Gamma_u(D,\sigma)$ or $\Gamma_u(D',\sigma')$ by replacing each nonempty family of parallel edges with a single edge, and let $w$ be the nonzero integer weighting of $\Gamma$ such that for each edge $e$, $w(e)$ is the total of the Goeritz indices of the crossings corresponding to $e$. 

According to Theorem~\ref{corr}, there are isotopy classes $\lambda,\lambda'$ of imbeddings of $\Gamma$ in the plane such that $\mathcal{D}(w,\lambda)$ and $\mathcal{D}(w,\lambda')$ include $D$ and $D'$ (respectively). It follows that $\lambda$ and $\lambda'$ are related through a finite sequence of the operations of the preceding section. The first type of operation, a local isotopy, has no effect on the link type of the associated diagram. The second type of operation, moving the imbedded image of a connected component from one complementary region to another, also has no effect on link type. 

The third type of graph operation involves swinging a disk around a vertex $v$, where $v$ provides the only connection between the portion of $\Gamma$ inside the disk and the portion of $\Gamma$ outside the disk. The corresponding operation on link diagrams involves changing the location on the boundary of an unshaded region where a connected sum is performed. It is easy to make such a change using an elementary mutation of the third kind. (If the two arcs where the connected summand is attached are parts of the same link component, the swinging can be performed with an ambient isotopy.) See Figure~\ref{figseventeen} for an example.

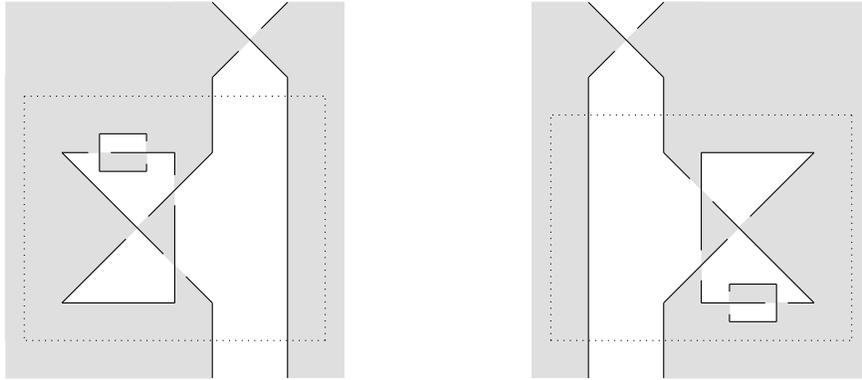
\begin{figure} [bht]
\centering
\begin{tikzpicture} [scale=.5]
\draw [lightgray!50, fill=lightgray!50] (-7,5) -- (-8,4) -- (-7,3);
\draw [lightgray!50, fill=lightgray!50] (-7,5) -- (-7,10) -- (-6,10) -- (-6,6);
\draw [lightgray!50, fill=lightgray!50] (-11.5,10) -- (-6,10) -- (-6,6) -- (-7.75,6) -- (-7.75,6.5) -- (-9,6.5) -- (-9,6) -- (-11.5,6);
\draw [lightgray!50, fill=lightgray!50] (-7.75,6) -- (-7.75,5.5) -- (-9,5.5) -- (-9,6);
\draw [lightgray!50, fill=lightgray!50] (-6,10) -- (-6,8) -- (-5,9);
\draw [lightgray!50, fill=lightgray!50] (-2.5,0) -- (-2.5,10) -- (-4,10) -- (-5,9) -- (-4,8) -- (-4,0);
\draw [lightgray!50, fill=lightgray!50] (-11.5,0) -- (-6,0) -- (-6,2) -- (-7,3) -- (-7,2) -- (-11.5,2);
\draw [lightgray!50, fill=lightgray!50] (-11.5,1) -- (-11.5,8) -- (-11,7) -- (-8,4) -- (-11,1);
\draw [thick] [dotted] (-3,1) -- (-11,1);
\draw [thick] [dotted] (-3,1) -- (-3,7.5);
\draw [thick] [dotted] (-11,7.5) -- (-3,7.5);
\draw [thick] [dotted] (-11,7.5) -- (-11,1);
\draw [thick] (-9,6.5) -- (-7.75,6.5);
\draw [thick] (-9,6.5) -- (-9,5.5);
\draw [thick] (-9,5.5) -- (-7.75,5.5);
\draw [thick] (-4,0) -- (-4,8); 
\draw [thick] (-6,10) -- (-4,8); 
\draw [thick] (-6,8) -- (-5.3,8.7); 
\draw [thick] (-4,10) -- (-4.7,9.3); 
\draw [thick] (-6,8) -- (-6,6); 
\draw [thick] (-6,6) -- (-7.7,4.3); 
\draw [thick] (-10,2) -- (-8.3,3.7); 
\draw [thick] (-10,2) -- (-7,2); 
\draw [thick] (-7,4.6) -- (-7,2); 
\draw [thick] (-7,6) -- (-7,5.4); 
\draw [thick] (-10,6) -- (-9.3,6); 
\draw [thick] (-7,6) -- (-8.7,6); 
\draw [thick] (-7.75,6.3) -- (-7.75,6.5); 
\draw [thick] (-7.75,5.7) -- (-7.75,5.5); 
\draw [thick] (-10,6) -- (-7.3,3.3); 
\draw [thick] (-6,2) -- (-6.7,2.7); 
\draw [thick] (-6,2) -- (-6,0); 
%righthand side
\draw [lightgray!50, fill=lightgray!50] (7,5) -- (8,4) -- (7,3);
\draw [lightgray!50, fill=lightgray!50] (7,5) -- (7,10) -- (6,10) -- (6,6);
\draw [lightgray!50, fill=lightgray!50] (11.5,10) -- (6,10) -- (6,6) -- (11.5,6);
\draw [lightgray!50, fill=lightgray!50] (6,10) -- (6,8) -- (5,9);
\draw [lightgray!50, fill=lightgray!50] (2.5,0) -- (2.5,10) -- (4,10) -- (5,9) -- (4,8) -- (4,0);
\draw [lightgray!50, fill=lightgray!50] (11.5,0) -- (6,0) -- (6,2) -- (7,3) -- (7,2) -- (7.75,2) -- (7.75,1.5) -- (9,1.5) -- (9,2) -- (11.5,2);
\draw [lightgray!50, fill=lightgray!50] (7.75,2) -- (7.75,2.5) -- (9,2.5) -- (9,2);
\draw [lightgray!50, fill=lightgray!50] (11.5,1) -- (11.5,8) -- (11,7) -- (8,4) -- (11,1);
\draw [thick] [dotted] (3,1) -- (11,1);
\draw [thick] [dotted] (3,1) -- (3,7);
\draw [thick] [dotted] (11,7) -- (3,7);
\draw [thick] [dotted] (11,7) -- (11,1);
\draw [thick] (7.75,2.3) -- (7.75,2.5); 
\draw [thick] (7.75,1.7) -- (7.75,1.5); 
\draw [thick] (9,2.5) -- (7.75,2.5);
\draw [thick] (9,2.5) -- (9,1.5);
\draw [thick] (9,1.5) -- (7.75,1.5); 
\draw [thick] (7,6) -- (10,6); 
\draw [thick] (4,0) -- (4,8); 
\draw [thick] (6,10) -- (5.3,9.3); 
\draw [thick] (4,8) -- (4.7,8.7); 
\draw [thick] (4,10) -- (6,8); 
\draw [thick] (6,8) -- (6,6); 
\draw [thick] (6,6) -- (6.7,5.3); 
\draw [thick] (10,2) -- (7.3,4.7); 
\draw [thick] (10,2) -- (9.3,2); 
\draw [thick] (7,2) -- (8.7,2); 
\draw [thick] (7,2.6) -- (7,2); 
\draw [thick] (7,3.4) -- (7,6); 
\draw [thick] (10,6) -- (8.3,4.3); 
\draw [thick] (6,2) -- (7.7,3.7); 
\draw [thick] (6,2) -- (6,0); 
\end{tikzpicture}
\caption{An elementary mutation of the third kind swings a subgraph of $\Gamma_u$.}
\label{figseventeen}
\end{figure}

Similarly, the fourth kind of graph imbedding operation, a flip with respect to $v$ and $w$, corresponds to an elementary mutation of the second kind. (Again, an ambient isotopy may suffice in a simple case, e.g., if $v$ lies in a shaded region.)

\section{Oriented links}
\label{sec:or}

In this section we adapt the discussion of Sections 2 -- 7 for oriented link diagrams. The Goeritz index is replaced with the~\emph{checkerboard writhe}, denoted $\eta_{or}$. The checkerboard writhe determines both the Goeritz index and the writhe, and also has the property that an underpass-overpass reversal multiplies $\eta_{or}$ by $-1$. See Figures~\ref{figeighteen} and~\ref{fignineteen}.

\begin{figure} [bht]
\centering
\begin{tikzpicture} [>=angle 90]
\draw [thick] [->] (-2.5,1) -- (-1/2,-1);
\draw [thick] [<-] (-2.5,-1) -- (-1.6,-.1);
\draw [thick] (-1.4,.1) -- (-1/2,1);
\draw [thick] [->] (2.5,1) -- (1/2,-1);
\draw [thick] [<-] (2.5,-1) -- (1.6,-.1);
\draw [thick] (1.4,.1) -- (1/2,1);
\node at (-3/2,-3/2) {$w=-1$};
\node at (3/2,-3/2) {$w=1$};
\end{tikzpicture}
\caption{The writhe of a crossing is the same if both orientations are reversed.}
\label{figeighteen}
\end{figure}
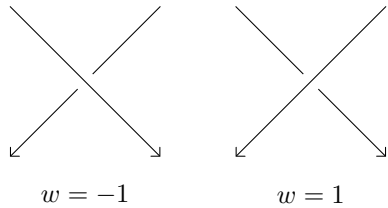

\begin{figure} [bht]
\centering
\begin{tikzpicture} [>=angle 90]
\draw [lightgray!50, fill=lightgray!50] (-2.5,1) -- (-1.5,0) -- (-.5,1);
\draw [lightgray!50, fill=lightgray!50] (-2.5,-1) -- (-1.5,0) -- (-1/2,-1);
\draw [lightgray!50, fill=lightgray!50] (2.5,1) -- (3/2,0) -- (.5,1);
\draw [lightgray!50, fill=lightgray!50] (5/2,-1) -- (3/2,0) -- (1/2,-1);
\draw [thick] [->] (-2.5,1) -- (-1/2,-1);
\draw [thick] [<-] (-2.5,-1) -- (-1.6,-.1);
\draw [thick] (-1.4,.1) -- (-1/2,1);
\draw [thick] [->] (2.5,1) -- (1/2,-1);
\draw [thick] [<-] (2.5,-1) -- (1.6,-.1);
\draw [thick] (1.4,.1) -- (1/2,1);
\node at (-3/2,-3/2) {$\eta_{or}=-1$};
\node at (-3/2,-2) {$\eta=-1,w=-1$};
\node at (3/2,-3/2) {$\eta_{or}=1$};
\node at (3/2,-2) {$\eta=1,w=1$};
\draw [lightgray!50, fill=lightgray!50] (6-2.5,1) -- (6-1.5,0) -- (6-.5,1);
\draw [lightgray!50, fill=lightgray!50] (6-2.5,-1) -- (6-1.5,0) -- (6-1/2,-1);
\draw [lightgray!50, fill=lightgray!50] (6+2.5,1) -- (6+3/2,0) -- (6+.5,1);
\draw [lightgray!50, fill=lightgray!50] (6+5/2,-1) -- (6+3/2,0) -- (6+1/2,-1);
\draw [thick] [->] (6-2.5,1) -- (6-1/2,-1);
\draw [thick] (6-2.5,-1) -- (6-1.6,-.1);
\draw [thick] [->] (6-1.4,.1) -- (6-1/2,1);
\draw [thick] [<-] (6+2.5,1) -- (6+1/2,-1);
\draw [thick] [<-] (6+2.5,-1) -- (6+1.6,-.1);
\draw [thick] (6+1.4,.1) -- (6+1/2,1);
\node at (6-3/2,-3/2) {$\eta_{or}=-\mathbf{i}$};
\node at (6-3/2,-2) {$\eta=-1,w=1$};
\node at (6+3/2,-3/2) {$\eta_{or}=\mathbf{i}$};
\node at (6+3/2,-2) {$\eta=1,w=-1$};
\end{tikzpicture}
\caption{Like the ordinary writhe, the checkerboard writhe $\eta_{or}$ is the same if both orientations are reversed.}
\label{fignineteen}
\end{figure}
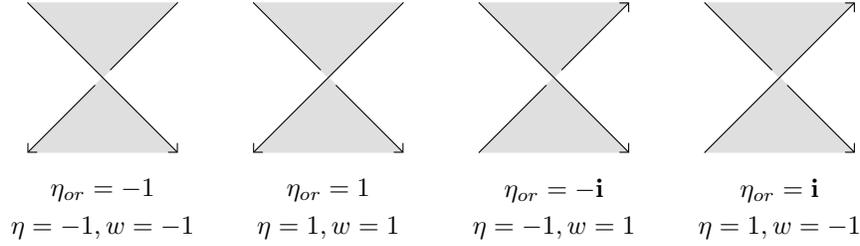

We use $\mathbf{i}=\sqrt{-1}$ in the checkerboard writhe simply to guarantee that $\eta_{or}$ values not related by underpass-overpass reversals are linearly independent over $\mathbb{Q}$. Any other element of $\mathbb{C}-\mathbb{Q}$ would serve the same purpose.

\begin{definition}
\label{goeritzmator}Let $D$ be an oriented link diagram, and $\sigma$ either of the two
checkerboard shadings of the complementary regions of $D$. Let $R_{1},\dots ,R_{n}$ be the
unshaded complementary regions according to $\sigma$, and for $i,j\in \{1,\dots
,n\}$ let $C_{ij}$ be the set of crossings of $D$ incident on $R_{i}$ and $R_{j}$. Then the \emph{oriented Goeritz matrix} of $D$ with respect to $\sigma$ is the $n\times n$ matrix $G_{or}(D,\sigma)$ with these entries: 
\[
G_{or}(D,\sigma)_{ij}=%
\begin{cases}
-\sum\limits_{c\in C_{ij}}\eta_{or}(c)\text{,} & \text{if }i\neq j\\
-\sum\limits_{k\neq i}G_{or}(D,\sigma)_{ik}\text{,} & \text{if }i=j
\end{cases}
\]
\end{definition}

\begin{definition}
\label{goeritzmator2}Let $\sigma$ be one of the two checkerboard shadings of the complementary regions of an oriented link diagram $D$, and let $B(D,\sigma)$ be the $(\beta_s(D,\sigma)-1) \times (\beta_s(D,\sigma)-1)$ matrix whose entries are all $0$. Then the \emph{adjusted, oriented} Goeritz matrix of $D$ with respect to $\sigma$ is 
\[
G_{or}^{adj}(D,\sigma)=
\begin{pmatrix}
G_{or}(D,\sigma) & 0 \\ 
0 & B(D,\sigma)
\end{pmatrix}.
\]
\end{definition}

Definition~\ref{goeritzmator2} yields an oriented version of Goeritz equivalence, in the same way that Definition~\ref{goeritzmat2} yields $\sim_G$.

\begin{definition}
\label{goeritzrelor}
Two oriented links $L$ and $L^{\prime }$ are \emph{Goeritz equivalent}, denoted $L\approx_{G} L'$, if there is a finite sequence $L=L_{1},\dots ,L_{k}=L^{\prime }$
of oriented links such that for each $i\in \{2,\dots ,k\}$, at least one of these conditions holds:
\begin{itemize} 
\item $[L_{i-1}]=[L_{i}]$, i.e., $L_{i-1}$ is ambient isotopic to $L_{i}$.
\item $L_{i-1}$ and $L_{i}$ have diagrams $D_{i-1}$ and $D_{i}$ with shadings $\sigma_{i-1}$ and $\sigma_{i}$ such that $G_{or}^{adj}(D_{i-1},\sigma_{i-1})=G_{or}^{adj}(D_{i},\sigma_{i})$.
\end{itemize}
\end{definition}

When appropriate, we also say that two oriented diagrams are Goeritz equivalent.

Here is a useful definition.

\begin{definition}
\label{detached}
Suppose $D$ is a diagram of $L$, $L_0$ is a sublink of $L$ and there is no crossing in $D$ involving an arc from $L_0$ and an arc from $L-L_0$. Then $L_0$ is a~\emph{detached sublink} in $D$. In particular, $L$ itself is a detached sublink in $D$.
\end{definition}

As noted in Figure~\ref{fignineteen}, reversing both orientations at a crossing does not affect the checkerboard writhe. Of course in practice we cannot reverse the orientations at just one crossing. However, if we reverse all the component orientations in a detached sublink then we obtain an oriented diagram $D'$ with $G_{or}^{adj}(D,\sigma) = G_{or}^{adj}(D',\sigma')$.

Adapting the discussion of mutation to oriented links is not quite so straightforward. Oriented elementary mutations are not uniquely defined, because an elementary mutation with respect to a tangle may require reversing all the arc orientations inside (or outside) the tangle. We deal with this ambiguity by including a new clause in the oriented version of Definition~\ref{mutant}. The preceding paragraph tells us that this new clause is implicit in Definition~\ref{goeritzrelor}.

\begin{definition}
\label{mutantor}Two oriented links $L$ and $L^{\prime }$ are \emph{Conway equivalent}, denoted $L\approx_{C} L'$, if there is a finite sequence $L=L_{1},\dots ,L_{k}=L^{\prime }$ of oriented links such that for each $i\in \{2,\dots ,k\}$, at least one of these conditions holds:
\begin{itemize} 
\item $[L_{i-1}]=[L_{i}]$, i.e., $L_{i-1}$ is ambient isotopic to $L_{i}$.
\item $L_{i-1}$ and $L_{i}$ have diagrams $D_{i-1}$ and $D_{i}$ such that $D_{i-1}$ is related to $D_{i}$ by an elementary mutation. If necessary, all the orientations within the tangle of the mutation are reversed.
\item $L_{i-1}$ and $L_{i}$ have diagrams $D_{i-1}$ and $D_{i}$ such that $D_{i}$ is obtained by reversing all orientations in a sublink that is detached in $D_{i-1}$.
\end{itemize}
\end{definition}

We are now ready to state oriented versions of Theorem~\ref{main} and Corollary~\ref{maincor}:

\begin{theorem}
\label{mainor}
If $L,L'$ are oriented links then $L\approx_C L'$ if and only if $L\approx_G L'$.
\end{theorem}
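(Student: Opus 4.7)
The plan is to mirror the proof of Theorem~\ref{main} from Sections~2--7, substituting the checkerboard writhe $\eta_{or}$ for the Goeritz index $\eta$ throughout and accounting for the extra orientation data via the two new clauses in Definition~\ref{mutantor}. All the ambient-isotopy-only results of Section~2 (Propositions~\ref{noshade}, \ref{noshadeadj}, and \ref{infpt}) extend verbatim to the oriented setting, because the ``construction'' used there never disturbs a crossing and hence preserves every $\eta_{or}$ value.

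For the direction $\approx_C \Rightarrow \approx_G$, I would revisit Section~3 and check each of the three clauses of Definition~\ref{mutantor} in turn. The checkerboard-graph analysis of Figures~\ref{figfive}--\ref{figseven} transfers without change; the new observation is that a simultaneous reversal of both strand orientations at a crossing leaves $\eta_{or}$ fixed, as shown in Figure~\ref{fignineteen}. Consequently an elementary mutation of the first or second kind (with tangle orientations reversed if required) preserves $G^{adj}_{or}$ with respect to the shadings indicated in Section~3, third-kind mutations factor through the other two just as before, and reversing all component orientations in a detached sublink $L_0$ amounts to a simultaneous two-strand reversal at every crossing inside $L_0$ while leaving all crossings outside $L_0$ untouched (there are no $L_0$-to-complement crossings, by detachedness).

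For the direction $\approx_G \Rightarrow \approx_C$, the first step is an oriented version of Proposition~\ref{goeritzprop}. All the moves used there ($\Omega.2$, $\Omega.4$, $\Omega.5$, connected-sum relocations, and second-kind mutations) are ambient isotopies of the oriented link in $\mathbb{R}^3$ or mutations that respect the ambient orientation, so the earlier argument applies with no modification. Next I would adapt Theorem~\ref{corr}. Since the two strands of a two-stranded braid each carry a single orientation, every crossing along such a braid has the same writhe and hence the same $\eta_{or}$; so $\mathcal{D}$ extends to a bijection between local isotopy classes of oriented strongly normal shaded diagrams and triples $(w,\lambda,\vec{o})$, where $w(e)=|C_{ij}|\,\eta_{or}$ now takes values in $\mathbb{Z}\cup\mathbb{Z}\mathbf{i}$, $\lambda$ is an isotopy class of imbeddings of the underlying simple graph, and $\vec{o}$ records component orientations.

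The main obstacle is assembling these pieces in the style of Section~7. Given oriented diagrams $D,D'$ with $G^{adj}_{or}(D,\sigma)=G^{adj}_{or}(D',\sigma')$, the strongly-normal reduction aligns their $(w,\lambda)$-data, and realizing one imbedding in terms of the other via the graph operations of Section~6 produces a sequence of ambient isotopies and elementary mutations exactly as in Section~7, with tangle orientations reversed where needed so that the mutation is well-defined on the oriented link. At the end of this process, however, the two oriented diagrams may still disagree on the orientations of some set $S$ of components. Matching $\eta_{or}$ at every crossing forces every crossing to have either both or neither of its strands in $S$; that is, $S$ is a detached sublink, so a single application of the third clause of Definition~\ref{mutantor} completes the reduction. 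This detached-sublink bookkeeping is the one genuinely new ingredient beyond the unoriented argument.
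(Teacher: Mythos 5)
Your proposal is correct and takes essentially the same route as the paper: Section 8 likewise reruns the arguments of Sections 2--7 with $\eta_{or}$ in place of $\eta$, observes that elementary mutations preserve the checkerboard writhe because the two strand orientations at each crossing are reversed together, and uses the detached-sublink reversal clause to absorb the residual orientation ambiguity. The only difference is one of packaging: the paper records the orientation data through a ``realizability'' condition on weightings valued in $\mathbb{Z}\cup\mathbb{Z}\mathbf{i}$ and proves realizability is independent of the imbedding class $\lambda$, whereas you carry the component orientations $\vec{o}$ explicitly and make the concluding detached-sublink step (which the paper leaves largely implicit) fully explicit.
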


\begin{corollary}
\label{maincoror}
Let $\mathcal{I}$ be an invariant of classical, oriented link types. Then $\mathcal{I}$ is invariant under mutation if and only if for every choice of a shading $\sigma$ of a diagram $D$ of an oriented link $L$, $\mathcal{I}([L])$ is determined in some way by the adjusted, oriented Goeritz matrix $G_{or}^{adj}(D,\sigma)$.
\end{corollary}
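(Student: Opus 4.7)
The plan is to derive Corollary~\ref{maincoror} from Theorem~\ref{mainor} in exactly the way Corollary~\ref{maincor} follows from Theorem~\ref{main}. The key observation is that the statement ``$\mathcal{I}([L])$ is determined in some way by $G_{or}^{adj}(D,\sigma)$ for every choice of $\sigma$ and $D$'' is, when unpacked, precisely the assertion that there is a well-defined function $F$ on the set of adjusted oriented Goeritz matrices arising from diagrams of classical oriented link types such that $\mathcal{I}([L])=F(G_{or}^{adj}(D,\sigma))$ for every diagram $D$ of every link $L$ and every shading $\sigma$. Such an $F$ exists if and only if, whenever $G_{or}^{adj}(D,\sigma)=G_{or}^{adj}(D',\sigma')$ for diagrams $D,D'$ of oriented link types $[L],[L']$, one has $\mathcal{I}([L])=\mathcal{I}([L'])$. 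This in turn is exactly the condition that $\mathcal{I}$ is invariant under Goeritz equivalence $\approx_G$.

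For the ``$\Leftarrow$'' direction, I would suppose that $\mathcal{I}([L])$ is determined by $G_{or}^{adj}(D,\sigma)$ in the above sense. To show $\mathcal{I}$ is invariant under mutation, interpret ``mutation'' in the oriented setting as $\approx_C$ (Definition~\ref{mutantor}); by Theorem~\ref{mainor}, $L\approx_C L'$ implies $L\approx_G L'$, and invariance under $\approx_G$ is exactly the well-definedness hypothesis, so $\mathcal{I}([L])=\mathcal{I}([L'])$. Note that the third clause of Definition~\ref{mutantor} (reversing orientations on a detached sublink) is automatically handled, since such a reversal preserves $\eta_{or}$ at every crossing and hence preserves $G_{or}^{adj}$.

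For the ``$\Rightarrow$'' direction, I would suppose $\mathcal{I}$ is invariant under oriented mutation, i.e., under $\approx_C$. Given two diagrams $D,D'$ (with shadings $\sigma,\sigma'$) of oriented links $L,L'$ such that $G_{or}^{adj}(D,\sigma)=G_{or}^{adj}(D',\sigma')$, definition~\ref{goeritzrelor} gives $L\approx_G L'$ via a two-term sequence, and Theorem~\ref{mainor} then gives $L\approx_C L'$, so $\mathcal{I}([L])=\mathcal{I}([L'])$. Therefore the rule $G_{or}^{adj}(D,\sigma)\mapsto \mathcal{I}([L])$ defines a function $F$ on the set of all adjusted oriented Goeritz matrices arising from oriented link diagrams, which is precisely the required ``determination.''

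The argument is essentially immediate once Theorem~\ref{mainor} is in hand; there is no real obstacle. The only small point deserving attention is the bookkeeping of orientation reversals: one must note that the detached-sublink reversal clause in Definition~\ref{mutantor} preserves $G_{or}^{adj}$ (as already observed in the paragraph following Definition~\ref{detached}), so its inclusion in the definition of $\approx_C$ is harmless for the $\Leftarrow$ direction, while its presence is exactly what is needed to make the biconditional in Theorem~\ref{mainor} apply cleanly in the $\Rightarrow$ direction.
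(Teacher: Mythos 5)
Your argument is correct and is essentially the paper's own (the paper states Corollary~\ref{maincoror} as an immediate consequence of Theorem~\ref{mainor}, exactly via the unpacking you give: ``determined by $G_{or}^{adj}$'' means constant on $\approx_G$-classes, which by the theorem coincides with constancy on $\approx_C$-classes). Your side remark that the detached-sublink reversal clause preserves $G_{or}^{adj}$ is also the paper's observation following Definition~\ref{detached}, so nothing is missing.
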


We proceed to discuss the changes that must be made in the discussion of Sections 2 -- 7 to prove Theorem~\ref{mainor}. The only change required in Section 2 is to specify that the construction discussed there should not involve any change of orientations. Sections 3 and 4 also apply without any significant change; an elementary mutation does not affect the checkerboard writhe of any crossing even if some orientations are changed, because the two orientations at a crossing are either both preserved or both reversed. In particular we have the oriented version of Proposition~\ref{goeritzprop}:

\begin{proposition}
\label{goeritzpropor}
If $\sigma$ is a shading of an oriented link diagram $D$ then there is an oriented link diagram $D'$ with a strongly normal shading $\sigma'$, such that $D \approx_C D'$ and $G^{adj}_{or}(D,\sigma)=G^{adj}_{or}(D',\sigma')$.
\end{proposition}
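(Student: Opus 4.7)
The plan is to follow the proof of Proposition~\ref{goeritzprop} step by step, verifying at each stage that no crossing's checkerboard writhe is disturbed. First I would apply the oriented form of Proposition~\ref{noshadeadj}, available because the Section~2 construction is an ambient isotopy that carries orientations along unchanged; this produces a diagram $D'$ of the same oriented link type with $\sigma' = \sigma'_s$ and $G_{or}^{adj}(D',\sigma') = G_{or}^{adj}(D,\sigma)$.

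Next I would apply the same sequence of manipulations used in the unoriented proof: ambient isotopies to reposition crossing-free closed curves, elementary mutations of the third kind to turn enclosed subdiagrams into connected summands, $\Omega.4$ moves to remove the crossings of each $C_{ii}$, and $\Omega.5$ moves followed if necessary by elementary mutations of the second kind to force each nonempty $C_{ij}$ onto a single two-stranded braid. Every one of these operations is either an ambient isotopy or an elementary mutation; at each surviving crossing the two oriented strands are either both preserved or both reversed simultaneously, so the writhe and the Goeritz index --- and hence $\eta_{or}$ itself --- are unchanged at every crossing. Consequently $G_{or}^{adj}$ is preserved at each step.

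The only step that demands a new observation is the final one, in which an $\Omega.2$ move removes a consecutive pair of crossings $c_1, c_2 \in C_{ij}$ with opposite Goeritz indices. The unoriented proof observes that $\eta(c_1) + \eta(c_2) = 0$ guarantees cancellation in $G(\widehat{D},\widehat{\sigma})_{ij}$. To preserve $G_{or}^{adj}$ I additionally need $\eta_{or}(c_1) + \eta_{or}(c_2) = 0$, which will follow from the standard fact that the two crossings of any Reidemeister~II bigon have opposite writhes, independent of how the two strands involved are oriented. Combined with the opposite Goeritz signs guaranteed by the R2 geometry, a short case check --- splitting on whether $\eta(c_1) w(c_1) = +1$ or $-1$ and consulting Figure~\ref{fignineteen} --- yields $\eta_{or}(c_2) = -\eta_{or}(c_1)$ in either case. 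This writhe identity is the only place where orientations force a new computation, and I expect it to be the main obstacle; once it is recorded, the remainder of the unoriented argument carries over verbatim.
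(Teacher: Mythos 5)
Your proposal is correct and follows essentially the same route as the paper, which simply observes that the unoriented proof of Proposition~\ref{goeritzprop} carries over because ambient isotopies and elementary mutations preserve the checkerboard writhe of every surviving crossing. Your explicit check that the two crossings removed by an $\Omega.2$ move satisfy $\eta_{or}(c_1)+\eta_{or}(c_2)=0$ (since $\eta(c_1)w(c_1)=\eta(c_2)w(c_2)$ forces both crossings into the same case of Figure~\ref{fignineteen}) is a genuine detail that the paper leaves implicit, and it is verified correctly.
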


An important difference between the oriented and unoriented cases is the fact that some symmetric matrices cannot be realized as $G_{or}^{adj}(D,\sigma)$ matrices. For instance, the matrix 

\[
\begin{pmatrix}
2 & -1 & -1 \\ 
-1 & 2 & -1 \\ 
-1 & -1 & 2%
\end{pmatrix}%
\]
is $G^{adj}(D,\sigma)$ for a trefoil diagram $D$, but it is not $G_{or}^{adj}(D,\sigma)$ for any oriented link diagram $D$. If it were, Proposition~\ref{goeritzpropor} would imply that the matrix could be realized by an oriented diagram with a strongly normal shading. As indicated in Figure~\ref{figtwenty}, though, no such diagram exists.

\begin{figure} [bht]
\centering
\begin{tikzpicture} [scale=0.7]
\draw [lightgray!50, fill=lightgray!50] (1,1) -- (2,0) -- (3,1) -- (3,1.5) -- (-3,1.5) -- (-3,1) -- (-2,0) -- (-1,1) -- (0,0);
\draw [lightgray!50, fill=lightgray!50] (1,-1) -- (2,0) -- (3,-1) -- (3,-1.5) -- (-3,-1.5) -- (-3,-1) -- (-2,0) -- (-1,-1) -- (0,0);
\draw [thick] (-3,1) -- (-2.1,.1);
\draw [thick] [->] [>=angle 90] (-1.9,-.1) -- (-1,-1);
\draw [thick] [->] [>=angle 90] (-1,-1) -- (1,1);
\draw [thick] (1,1) -- (1.9,.1);
\draw [thick] [->] [>=angle 90] (2.1,-.1) -- (3,-1);
\draw [thick] [<-] [>=angle 90] (-3,-1) -- (-1,1);
\draw [thick] [->] [>=angle 90] (-.1,.1) -- (-1,1);
\draw [thick] (.1,-.1) -- (1,-1);
\draw [thick] [->] [>=angle 90] (3,1) -- (1,-1);
\end{tikzpicture}
\caption{The diagram cannot be completed with only three unshaded regions, if the orientations are respected.}
\label{figtwenty}
\end{figure}
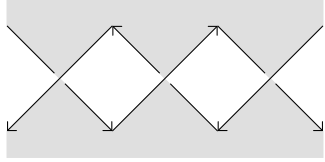

The changes in the discussion of Section 5 needed to reflect the issue of realizability are outlined below.

\begin{definition}
An \emph{oriented nonzero integer weighting} of a simple graph $\Gamma$ is a function $w:E(\Gamma) \to \{n,n \cdot \mathbf{i} \mid n \neq 0 \in \mathbb{Z}\}$. Each such weighting has an associated nonzero integer weighting $\overline{w}$ given by: if $w(e)=n \text{ or } w(e)=n \cdot \mathbf{i}$ then $\overline{w}(e)=n$. 
\end{definition}

\begin{definition}
Let $\Gamma$ be a planar simple graph with an oriented nonzero integer weighting $w$. Then $w$ is~\emph{realizable} if there is a local isotopy class $\lambda$ of imbeddings of $\Gamma$ in the plane, such that the components of the link diagrammed in $\mathcal{D}(\overline{w},\lambda)$ can be oriented so that for every edge $e \in E(\Gamma)$, every crossing corresponding to $e$ has checkerboard writhe $\eta_{or}=w(e)/|w(e)|$. The set of such ordered pairs $(w,\lambda)$ is denoted $\mathcal{IG}^{real}_{or}$.
\end{definition}

\begin{proposition} Suppose $\lambda$ and $\lambda'$ are local isotopy classes of imbeddings of the same simple graph $\Gamma$, and $w$ is an oriented nonzero integer weighting of $\Gamma$. Then $(w,\lambda) \in \mathcal{IG}^{real}_{or}$ if and only if $(w,\lambda') \in \mathcal{IG}^{real}_{or}$.
\end{proposition}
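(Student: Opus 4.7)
The plan is to reduce to the four elementary operations of Section~5 that connect local isotopy classes of imbeddings of a planar graph: local isotopies, movements of a connected component between complementary regions, swings of a disk about a vertex, and flips of a disk about two points. Since these operations connect any two isotopy classes of a given $\Gamma$, it suffices to show each preserves membership of $(w,\cdot)$ in $\mathcal{IG}^{real}_{or}$. By Theorem~\ref{corr} and the analysis in Section~7, the four graph-imbedding operations induce, respectively, a local isotopy of $\mathcal{D}(\overline{w},\cdot)$, a move of a detached sublink from one region to another, an elementary mutation of the third kind, and an elementary mutation of the second kind; all four put the crossings of the two diagrams into a canonical bijection preserving Goeritz indices.

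Given an orientation of $D_{1}=\mathcal{D}(\overline{w},\lambda)$ realizing $w$, I would construct an orientation of $D_{2}=\mathcal{D}(\overline{w},\lambda')$ realizing $w$ as follows. For a local isotopy or a move of a detached sublink, the orientation transfers directly. For each mutation, transfer the orientations of the arcs outside the rotated tangle directly, and then try one of two options inside: ``inherit'' (transfer the rotated orientations through the tangle) or ``reverse all'' (transfer and then reverse every orientation within the tangle). By the observation from Figure~\ref{fignineteen} that simultaneously reversing both strand orientations at a crossing leaves $\eta_{or}$ unchanged, either internal choice preserves the checkerboard writhe at every crossing inside the tangle; and crossings outside the tangle are untouched. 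So the resulting orientation still realizes $w$.

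The main technical point, and the step I expect to be the main obstacle, is to verify that at least one of the two internal options produces a consistently oriented diagram. Each of the three mutations is a $\pi$-rotation whose action $\rho$ on the four tangle boundary points is a nontrivial element of the Klein four-group, and each such element preserves each of the three perfect matchings on four points as a set of pairs. Hence the pairing of boundary points induced by the tangle's interior arcs is preserved by $\rho$, so the ``in/out'' pattern imposed on the boundary points by the external orientations remains compatible with the new pairing. A direct parity check, using the constraint that a pair of boundary points joined inside the tangle must carry opposite in/out labels, shows that the condition ``inherit works at one interior arc'' is equivalent to ``inherit works at the other''; thus a uniform choice of ``inherit'' or ``reverse all'' always produces a consistent orientation on the entire diagram $D_{2}$. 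Closed components entirely inside the tangle may be oriented under the same uniform choice, which preserves $\eta_{or}$ at their self-crossings (insensitive to full reversal of one component) and at their crossings with other internal strands (both strand orientations are reversed together).
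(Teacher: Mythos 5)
Your proposal is correct and follows essentially the same route as the paper: reduce to the four imbedding operations connecting the isotopy classes, realize them on diagrams as local isotopies, component moves, and elementary mutations, and carry the orientation along, using the fact that the only ambiguity (reversing all orientations inside a mutated tangle) leaves every checkerboard writhe unchanged. The one point where you supply more detail than the paper is your Klein-four-group/perfect-matching verification that one of the two internal orientation choices is always globally consistent; the paper folds this standard fact into its definition of oriented mutation (see the remark preceding Definition~\ref{mutantor}), so your extra care is sound but does not constitute a different method.
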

\begin{proof}
Suppose $(w,\lambda) \in \mathcal{IG}^{real}_{or}$. According to Proposition~\ref{goeritzpropor}, there is an oriented link diagram $D$ with a strongly normal shading, such that $D$ is an element of $\mathcal{D}(\overline{w},\lambda)$ and for each $e \in E(\Gamma)$, every crossing of $D$ corresponding to $e$ has checkerboard writhe $\eta_{or}=w(e)/|w(e)|$. The discussion of Sections 6 and 7 tells us that a diagram from $\mathcal{D}(\overline{w},\lambda)$ can be transformed into a diagram from $\mathcal{D}(\overline{w},\lambda')$  through ambient isotopies and elementary mutations. We can carry the component orientations from $D$ along while performing ambient isotopies and elementary mutations, up to the orientation ambiguity of mutation mentioned earlier; this ambiguity does not change the checkerboard writhe of any crossing, so it does not affect $w$. We conclude that $\mathcal{D}(\overline{w},\lambda')$ includes a diagram $D'$ that can be oriented so as to realize $w$.
\end{proof}

\begin{definition}
Let $\mathcal{IL}_{or}$ denote the set of local isotopy classes of oriented link diagrams with strongly normal shadings. 
\end{definition}

The proof of Theorem~\ref{corr} now yields the following.

\begin{theorem}
\label{corror}
There is a bijection $\mathcal{D}:\mathcal{IG}^{real}_{or} \to \mathcal{IL}_{or}$ with these properties:
\begin{itemize}
\item If $(w,\lambda) \in \mathcal{IG}^{real}_{or}$ and $D \in \mathcal{D}(w,\lambda)$ then $D$ is an oriented link diagram with a strongly normal shading $\sigma$ such that $\Gamma_u(D,\sigma)$ is isomorphic to the graph obtained from $\Gamma$ by replacing each edge $e \in E(\Gamma)$ with $|w(e)|$ parallel edges.
\item If $D \in \mathcal{D}(w,\lambda)$ and $e \in E(\Gamma)$ then all of the crossings in $D$ corresponding to $e$ have the same checkerboard writhe, $\eta_{or}=w(e)/|w(e)|$.
\end{itemize}
\end{theorem}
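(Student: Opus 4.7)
The plan is to mimic the proof of Theorem~\ref{corr}, enriching both sides with orientation data. Given $(w,\lambda) \in \mathcal{IG}^{real}_{or}$, I would first feed $(\overline{w},\lambda)$ into the unoriented bijection to obtain a local isotopy class of strongly normally shaded unoriented diagrams. Realizability of $w$ then permits an orientation of a representative diagram whose checkerboard writhes match $w(e)/|w(e)|$ on all crossings corresponding to each edge $e$. Let $\mathcal{D}(w,\lambda)$ be the resulting oriented local isotopy class.

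For the inverse, let $D$ be an oriented link diagram with a strongly normal shading $\sigma$. Apply the unoriented inverse of Theorem~\ref{corr} to produce a simple graph $\Gamma$ and a local isotopy class $\lambda$ of plane imbeddings. For each $e \in E(\Gamma)$, the corresponding crossings of $D$ all lie on a single two-stranded braid (condition (5) of strong normality) and share a common Goeritz index (condition (4)); the two strands of the braid carry consistent orientations, so every crossing on the braid has the same writhe and hence a common checkerboard writhe $\eta_{or}(e) \in \{\pm 1, \pm \mathbf{i}\}$. Setting $w(e) = n(e)\cdot \eta_{or}(e)$, where $n(e)$ is the number of parallel crossings corresponding to $e$, yields an oriented nonzero integer weighting realized by $D$; hence $(w,\lambda) \in \mathcal{IG}^{real}_{or}$. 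Bijectivity and both bulleted properties then reduce to the unoriented case of Theorem~\ref{corr}, because the passage from $\overline{w}$ to $w$ is determined edge-by-edge by one checkerboard writhe, which is preserved throughout the construction.

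The main obstacle I anticipate is well-definedness of the forward map, because several orientations of a given underlying unoriented diagram may realize the same $w$: at each crossing the checkerboard writhe is unchanged by simultaneously reversing both incident arcs (Figure~\ref{fignineteen}), so globally reversing all orientations within any detached sublink preserves $w$. I would address this by showing that any two realizing orientations of a fixed unoriented representative differ precisely by reversals of detached sublinks, and then combining this with the preceding proposition --- realizability is $\lambda$-independent via the transport argument using elementary mutations and ambient isotopies of Sections~6 and~7 --- to conclude that $\mathcal{D}(w,\lambda)$ is a well-defined element of $\mathcal{IL}_{or}$ up to exactly the orientation ambiguity already absorbed into Definition~\ref{mutantor}, which is all that is needed for the oriented analog of the Goeritz-to-Conway argument to go through.
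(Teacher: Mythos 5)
Your proposal is correct and takes the same route as the paper, whose entire argument for Theorem~\ref{corror} is the single sentence deferring to the proof of Theorem~\ref{corr} after the realizability discussion; like the paper, you build $\mathcal{D}(w,\lambda)$ by feeding $(\overline{w},\lambda)$ into the unoriented construction and then orienting via realizability, and you recover $w$ from an oriented strongly normally shaded diagram by reading one checkerboard writhe per edge. The one place you go beyond the paper is the well-definedness of the forward map, and your concern there is genuine: two orientations of the same underlying unoriented diagram realize the same $w$ exactly when they differ by reversing all components of a union of detached sublinks (your argument for this is right: a crossing's $\eta_{or}$ is preserved iff both incident arcs are reversed or neither is, so components that cross one another must be reversed together), but such a reversal generally changes the local isotopy class of the oriented diagram, since a local isotopy cannot reverse the orientation of an embedded closed curve. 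So, strictly read, $\mathcal{D}$ is a bijection onto $\mathcal{IL}_{or}$ only after quotienting by detached-sublink reversal (equivalently, one should take $\mathcal{D}(w,\lambda)$ to be the finite set of local isotopy classes so related); the paper does not address this. As you note, this ambiguity is exactly the move already built into Definition~\ref{mutantor}, so it is harmless for the intended application to Theorem~\ref{mainor}, but it is worth stating explicitly rather than leaving the word ``bijection'' to carry the weight.
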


The proof of Theorem~\ref{mainor} is completed by following the argument of Section 7. 

\section{Two mutant knots}
\label{sec:cor2}

In this section and the next, we mention two interesting pairs of knots. Recall that for a knot, every shading $\sigma$ of a diagram $D$ has $\beta_s(D,\sigma)=1$, so we need not distinguish between $G(D,\sigma)$ and $G^{adj}(D,\sigma)$.

Stoimenow and his collaborators have carefully studied the Conway equivalences among thousands of classical knots of crossing number $\leq 16$; see for instance \cite{St, St1}. From this tremendous body of examples we require only two.

\begin{proposition}(\cite{St})
\label{stoim}
The knots $K_1=14_{46187}$ and $K_2=!14_{43907}$ pictured in Figure 1 of \cite{St} have the following properties.
\begin{enumerate}
    \item $[K_1] \neq [K_2]$.
    \item Each of $[K_1],[K_2]$ has a diagram with 14 crossings.
    \item No knot type Conway equivalent to $[K_1]$ or $[K_2]$ has a diagram with fewer than 14 crossings.
    \item The 14-crossing diagrams of $[K_1]$ and $[K_2]$ admit only elementary mutations that are trivial, i.e., they do not change the knot type.
    \item $[K_1]$ and $[K_2]$ have 15-crossing diagrams connected by an elementary mutation.
\end{enumerate}
\end{proposition}

Suppose we apply the proof of Proposition \ref{goeritzprop} to a 14-crossing diagram $D$ of $[K_1]$ or $[K_2]$. Every step of the argument is either an ambient isotopy or an elementary mutation, and no step increases the number of crossings. Items 3 and 4 of Proposition \ref{stoim} guarantee that no step of the proof has any effect on the knot type. It follows that if $\sigma$ is a shading of a 14-crossing diagram $D$ of $[K_1]$ (or $[K_2]$), then there is a 14-crossing diagram $D'$ of $[K_1]$ (or $[K_2]$) with a strongly normal shading $\sigma'$, such that $G(D,\sigma)=G(D',\sigma')$.

\begin{lemma}
\label{nog}
If $\sigma_1$ and $\sigma_2$ are shadings of 14-crossing diagrams $D_1$ and $D_2$ of $[K_1]$ and $[K_2]$, then $G(D_1,\sigma_1) \neq G(D_2,\sigma_2)$.
\end{lemma}

\begin{proof}
Suppose instead that $G(D_1,\sigma_1) = G(D_2,\sigma_2)$. As was just noted, we may presume that $\sigma_1$ and $\sigma_2$ are strongly normal shadings of $D_1$ and $D_2$. 

The equality $G(D_1,\sigma_1) = G(D_2,\sigma_2)$ implies that there is an isomorphism between the graphs $\Gamma_u(D_1,\sigma_1)$ and $\Gamma_u(D_2,\sigma_2)$, such that edges matched by the isomorphism correspond to crossings of the same Goeritz index. Let $\Gamma$ be the simple graph obtained from $\Gamma_u(D_1,\sigma_1)$ and $\Gamma_u(D_2,\sigma_2)$ by combining each set of parallel edges into one edge, and let $w$ be the nonzero integer weighting of $\Gamma$ corresponding to $\Gamma_u(D_1,\sigma_1)$ and $\Gamma_u(D_2,\sigma_2)$. (The equality $G(D_1,\sigma_1) = G(D_2,\sigma_2)$ implies that $\Gamma_u(D_1,\sigma_1)$ and $\Gamma_u(D_2,\sigma_2)$ are described by the same integer weighting of $\Gamma$.) Then there are local isotopy classes $\lambda_1, \lambda_2$ of imbeddings of $\Gamma$ in the plane, such that $D_1$ and $D_2$ are the images of $(w,\lambda_1)$ and $(w,\lambda_2)$ under the bijection of Theorem \ref{corr}.

There is a sequence of the operations of Section \ref{sec:imbed}, which transforms $\lambda_1$ into $\lambda_2$. As discussed in Section \ref{sec:gc}, these operations induce a sequence of local isotopies and elementary mutations, which preserve the number of crossings and transform $D_1$ into $D_2$. According to Proposition \ref{stoim}, these induced operations all preserve the knot type. But this is impossible, as $[K_1] \neq [K_2]$. \end{proof}

Proposition \ref{nog} implies that $\mathcal G([K_1]) \neq \mathcal G([K_2])$. On the other hand, the 15-crossing diagrams $D'_1,D'_2$ of $[K_1]$ and $[K_2]$ given in \cite[Figure 1]{St} are related through an elementary mutation of the second kind, and they share the Goeritz matrix $G(D'_1,\sigma_u)=G(D'_2,\sigma_u)$. Therefore  $K_1$ and $K_2$ share some of their (adjusted) Goeritz matrices, but not all.

\section{Two non-mutant knots}
\label{sec:cforms}

Goeritz~\cite{G} showed that some Reidemeister moves have the effect of replacing a Goeritz matrix $G(D,\sigma)$ with a congruent matrix, i.e., a matrix of the form $UG(D,\sigma)U^T$ where $U$ is a unimodular matrix of integers. The equivalence class of a symmetric integer matrix under unimodular congruence is the set of integer matrices that represent a single bilinear or quadratic form, with respect to different bases. Consequently it is natural to think that the knot-theoretic significance of the Goeritz matrix is tied to the quadratic form (or bilinear form) which the matrix represents. Many authors who have written about Goeritz matrices have explicitly discussed forms and their associated invariants, like the determinant, the $d$-invariant and the Minkoski units. Examples include Crowell~\cite{C}, Goeritz himself~\cite{G}, Gordon and Litherland \cite{GL}, Greene \cite{Gre}, Kyle \cite{K} and Lee \cite{Lee}. Other authors have not mentioned forms but instead have focused on congruence of the Goeritz matrix, like Im, Lee and Lee~\cite{ILL} and Traldi \cite{T}. 

It is important to realize that Theorem~\ref{main} does not follow the same theme. Rather than allowing arbitrary congruences of (adjusted) Goeritz matrices, Definition~\ref{goeritzrel} takes into account only diagram changes under which either the link type or the adjusted Goeritz matrix does not change at all. We illustrate this point with an example.

\begin{figure} [bht]
\centering
\begin{tikzpicture} [scale=.3]
%shading left and right pretzel sides
\draw [lightgray!50, fill=lightgray!50] (-18,4) -- (-18,-6) -- (-20,-6) -- (-20,4);
\draw [lightgray!50, fill=lightgray!50] (-18,2) -- (-17,3) -- (-18,4);
\draw [lightgray!50, fill=lightgray!50] (-18,2) -- (-15,-1) -- (-18,-4);
\draw [lightgray!50, fill=lightgray!50] (-19,0) -- (-18,-6) -- (-15,-6) -- (-14,-5);
\draw [lightgray!50, fill=lightgray!50] (8,4) -- (8,-6) -- (6,-6) -- (6,4);
\draw [lightgray!50, fill=lightgray!50] (6,2) -- (5,3) -- (6,4);
\draw [lightgray!50, fill=lightgray!50] (6,2) -- (3,-1) -- (6,-4);
\draw [lightgray!50, fill=lightgray!50] (7,0) -- (6,-6) -- (3,-6) -- (2,-5);
%shading little tops
\draw [lightgray!50, fill=lightgray!50] (-16,2) -- (-17,3) -- (-16,4) -- (-15,3);
\draw [lightgray!50, fill=lightgray!50] (-14,2) -- (-15,3) -- (-14,4) -- (-13,3);
\draw [lightgray!50, fill=lightgray!50] (-12,2) -- (-13,3) -- (-12,4) -- (-11,3);
\draw [lightgray!50, fill=lightgray!50] (-10,2) -- (-11,3) -- (-10,4) -- (-9,3);
\draw [lightgray!50, fill=lightgray!50] (4,2) -- (5,3) -- (4,4) -- (3,3);
\draw [lightgray!50, fill=lightgray!50] (2,2) -- (3,3) -- (2,4) -- (1,3);
\draw [lightgray!50, fill=lightgray!50] (0,2) -- (1,3) -- (0,4) -- (-1,3);
\draw [lightgray!50, fill=lightgray!50] (-2,2) -- (-1,3) -- (-2,4) -- (-3,3);
%shading big middle
\draw [lightgray!50, fill=lightgray!50] (-9,3) -- (-8,4) -- (-4,4) -- (-3,3);
\draw [lightgray!50, fill=lightgray!50] (-9,3) -- (-8,2) -- (-4,2) -- (-3,3);
\draw [lightgray!50, fill=lightgray!50] (-11,-1) -- (-1,-1);
\draw [lightgray!50, fill=lightgray!50] (-11,-1) -- (-8,2) -- (-4,2) -- (-1,-1);
\draw [lightgray!50, fill=lightgray!50] (-11,-1) -- (-9.5,-2.5) -- (-2.5,-2.5) -- (-1,-1);
\draw [lightgray!50, fill=lightgray!50] (-9.5,-2.5) -- (-2.5,-2.5) -- (0,-5) -- (-1,-6) -- (-11,-6) -- (-12,-5);
%shading bottom and middle
\draw [lightgray!50, fill=lightgray!50] (-12,-5) -- (-13,-6) -- (-14,-5) -- (-13,-4);
\draw [lightgray!50, fill=lightgray!50] (-12,-2) -- (-11,-1) -- (-12,0) -- (-13,-1);
\draw [lightgray!50, fill=lightgray!50] (-14,-2) -- (-13,-1) -- (-14,0) -- (-15,-1);
\draw [lightgray!50, fill=lightgray!50] (2,-2) -- (1,-1) -- (2,0) -- (3,-1);
\draw [lightgray!50, fill=lightgray!50] (0,-2) -- (-1,-1) -- (0,0) -- (1,-1);
\draw [lightgray!50, fill=lightgray!50] (1,-6) -- (2,-5) -- (1,-4) -- (0,-5);
%shading unknot
\draw [lightgray!50, fill=lightgray!50] (13,-6) -- (14,-6) -- (14,4) -- (13,4);
\draw [lightgray!50, fill=lightgray!50] (14,4) -- (20,4) -- (20,3) -- (19,2) -- (14,2);
\draw [lightgray!50, fill=lightgray!50] (20,3) -- (18,1) -- (16,3);
\draw [lightgray!50, fill=lightgray!50] (17,2) -- (16,1) -- (15,2);
\draw [lightgray!50, fill=lightgray!50] (13,2) -- (14,1) -- (15,2);
\draw [lightgray!50, fill=lightgray!50] (14,-3) -- (15,-4) -- (16,-3) -- (17,-4) -- (18,-3) -- (20,-5) -- (20,-6) -- (14,-6);
\draw [lightgray!50, fill=lightgray!50] (14,-3) -- (15,-2) -- (16,-3) -- (17,-2) -- (18,-3) -- (19,-2) -- (19,0) -- (18,1) -- (17,0) -- (16,1) -- (15,0) -- (14,1);
%links
%left-hand pretzel top
\draw [thick] (-18,4) -- (-16,2);
\draw [thick] (-16,4) -- (-14,2);
\draw [thick] (-14,4) -- (-12,2);
\draw [thick] (-12,4) -- (-10,2);
\draw [thick] (-10,4) -- (-8,2);
\draw [thick] (-18,2) -- (-17.2,2.8);
\draw [thick] (-16.8,3.2) -- (-16,4);
\draw [thick] (-16,2) -- (-15.2,2.8);
\draw [thick] (-14.8,3.2) -- (-14,4);
\draw [thick] (-14,2) -- (-13.2,2.8);
\draw [thick] (-12.8,3.2) -- (-12,4);
\draw [thick] (-12,2) -- (-11.2,2.8);
\draw [thick] (-10.8,3.2) -- (-10,4);
\draw [thick] (-10,2) -- (-9.2,2.8);
\draw [thick] (-8.8,3.2) -- (-8,4);
%left-hand pretzel middle 
\draw [thick] (-16,0) -- (-14,-2);
\draw [thick] (-14,0) -- (-12,-2);
\draw [thick] (-12,0) -- (-9.5,-2.5);
\draw [thick] (-16.5,-2.5) -- (-15.2,-1.2);
\draw [thick] (-14.8,-0.8) -- (-14,0);
\draw [thick] (-14,-2) -- (-13.2,-1.2);
\draw [thick] (-12.8,-0.8) -- (-12,0);
\draw [thick] (-12,-2) -- (-11.2,-1.2);
\draw [thick] (-10.8,-0.8) -- (-10,0);
%connections within left
\draw [thick] (-8,2) -- (-10,0);
\draw [thick] (-18,2) -- (-16,0);
\draw [thick] (-18,4) -- (-20,4);
\draw [thick] (-20,-6) -- (-20,4);
\draw [thick] (-20,-6) -- (-15,-6);
\draw [thick] (-16.5,-2.5) -- (-15,-4);
\draw [thick] (-9.5,-2.5) -- (-11,-4);
%left-hand pretzel bottom
\draw [thick] (-15,-6) -- (-13,-4);
\draw [thick] (-13,-6) -- (-11,-4);
\draw [thick] (-15,-4) -- (-14.2,-4.8);
\draw [thick] (-13.8,-5.2) -- (-13,-6);
\draw [thick] (-13,-4) -- (-12.2,-4.8);
\draw [thick] (-11.8,-5.2) -- (-11,-6);
%connections to right
\draw [thick] (-11,-6) -- (-1,-6);
\draw [thick] (-8,4) -- (-4,4);
%right-hand pretzel top
\draw [thick] (-4,2) -- (-2,4);
\draw [thick] (-2,2) -- (0,4);
\draw [thick] (0,2) -- (2,4);
\draw [thick] (2,2) -- (4,4);
\draw [thick] (4,2) -- (6,4);
\draw [thick] (-4,4) -- (-3.2,3.2);
\draw [thick] (-2.8,2.8) -- (-2,2);
\draw [thick] (-2,4) -- (-1.2,3.2);
\draw [thick] (-0.8,2.8) -- (0,2);
\draw [thick] (0,4) -- (0.8,3.2);
\draw [thick] (1.2,2.8) -- (2,2);
\draw [thick] (2,4) -- (2.8,3.2);
\draw [thick] (3.2,2.8) -- (4,2);
\draw [thick] (4,4) -- (4.8,3.2);
\draw [thick] (5.2,2.8) -- (6,2);
%right-hand pretzel middle 
\draw [thick] (-2.5,-2.5) -- (0,0);
\draw [thick] (0,0) -- (0.8,-0.8);
\draw [thick] (1.2,-1.2) -- (2,-2);
\draw [thick] (0,-2) -- (2,0);
\draw [thick] (0,-2) -- (-0.8,-1.2);
\draw [thick] (-4,2) -- (-1.2,-0.8);
\draw [thick] (2,0) -- (2.8,-0.8);
\draw [thick] (3.2,-1.2) -- (4,-2);
\draw [thick] (2,-2) -- (6,2);
%right-hand pretzel bottom
\draw [thick] (-2.5,-2.5) -- (1,-6);
\draw [thick] (1,-4) -- (3,-6);
\draw [thick] (1,-4) -- (0.2,-4.8);
\draw [thick] (-1,-6) -- (-0.2,-5.2);
\draw [thick] (1,-6) -- (1.8,-5.2);
\draw [thick] (2.2,-4.8) -- (4.5,-2.5);
\draw [thick] (4,-2) -- (4.5,-2.5);
%connections within right
\draw [thick] (3,-6) -- (8,-6);
\draw [thick] (6,4) -- (8,4);
\draw [thick] (8,-6) -- (8,4);
%the unknot
\draw [thick] (13,-6) -- (13,4);
\draw [thick] (20,-6) -- (13,-6);
\draw [thick] (20,4) -- (13,4);
\draw [thick] (20,4) -- (20,3);
\draw [thick] (18.2,-3.2) -- (20,-5);
\draw [thick] (20,-6) -- (20,-5);
\draw [thick] (17.8,-2.8) -- (17,-2);
\draw [thick] (19,-2) -- (17,-4);
\draw [thick] (15,-2) -- (17,-4);
\draw [thick] (15,-2) -- (14,-3);
\draw [thick] (14,-3) -- (15,-4);
\draw [thick] (15.8,-3.2) -- (15,-4);
\draw [thick] (16.2,-2.8) -- (17,-2);
\draw [thick] (19,0) -- (19,-2);
\draw [thick] (19,0) -- (17,2);
\draw [thick] (15,0) -- (17,2);
\draw [thick] (15,0) -- (14,1);
\draw [thick] (15,2) -- (14,1);
\draw [thick] (15,2) -- (15.8,1.2);
\draw [thick] (17,0) -- (16.2,0.8);
\draw [thick] (17,0) -- (17.8,0.8);
\draw [thick] (18.2,1.2) -- (20,3);
%region labels
\node at (10.5,-1) {1};
\node at (17,-3) {5};
\node at (15,-3) {4};
\node at (17,1) {3};
\node at (15,1) {2};
\node at (1,1) {5};
\node at (1,-3) {4};
\node at (-13,1) {3};
\node at (-13,-3) {2};
\end{tikzpicture}
\caption{On the left, the connected sum of a $(5,3,-2)$ pretzel knot and a $(-5,-3,2)$ pretzel knot. On the right, an unknot. Unshaded regions are numbered to indicate row and column indices in the associated Goeritz matrices.} 
\label{figtwentyone}
\end{figure}
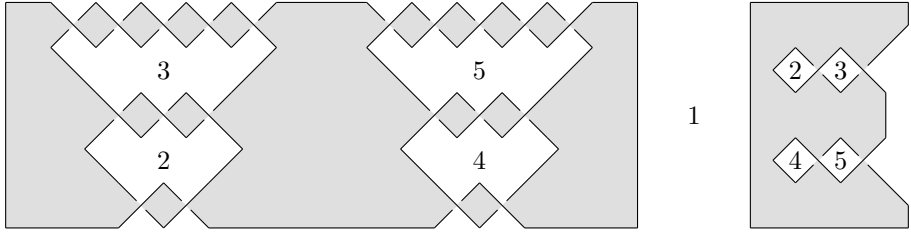

Two knot diagrams are given in Figure \ref{figtwentyone}. The diagram on the left represents a connected sum of two pretzel knots, and the diagram on the right represents a trivial knot. The (adjusted) Goeritz matrices associated with the indicated shadings are congruent: 
\[
\begin{pmatrix}
1 & 2 & 0 & 2 & 0 \\ 
0 & 1 & 0 & 0 & 0 \\ 
0 & -2 & 1 & 0 & 0 \\ 
0 & 0 & 0 & 1 & 0 \\ 
0 & 0 & 0 & -2 & 1%
\end{pmatrix}%
\begin{pmatrix}
0 & 0 & 1 & 0 & -1 \\ 
0 & 1 & -1 & 0 & 0 \\ 
1 & -1 & 0 & 0 & 0 \\ 
0 & 0 & 0 & -1 & 1 \\ 
-1 & 0 & 0 & 1 & 0%
\end{pmatrix}%
\begin{pmatrix}
1 & 0 & 0 & 0 & 0 \\ 
2 & 1 & -2 & 0 & 0 \\ 
0 & 0 & 1 & 0 & 0 \\ 
2 & 0 & 0 & 1 & -2 \\ 
0 & 0 & 0 & 0 & 1%
\end{pmatrix}
\]
\[
=\begin{pmatrix}
0 & 2 & -5 & -2 & 5 \\ 
2 & 1 & -3 & 0 & 0 \\ 
-5 & -3 & 8 & 0 & 0 \\ 
-2 & 0 & 0 & -1 & 3 \\ 
5 & 0 & 0 & 3 & -8%
\end{pmatrix}
\]

The two knots are not equivalent under mutation, however. (It is well known that a nontrivial knot cannot be equivalent to an unknot under mutation; see~\cite{M} for instance.) Consequently Theorem \ref{main} tells us that the two diagrams are not Goeritz equivalent.

By the way, the same point can be made using only one of the pretzel knots, rather than their connected sum. We provide the pictured example because these two Goeritz matrices are not only congruent; they are also equivalent under the ``corrected'' Goeritz matrix definition proposed in \cite{T}. 

It is no surprise that the diagrams in Figure~\ref{figtwentyone} are not alternating. For according to a theorem of Greene~\cite{Gre}, reduced alternating diagrams with congruent Goeritz matrices are Conway equivalent. (It has been known for a long time that the form represented by the Goeritz matrix is particularly useful for alternating links; see~\cite{C} for instance.)

\section{Questions}

We close with four interesting questions involving the ideas of the paper.

1. If $K_1$ and $K_2$ are Conway equivalent knots that are not ambient isotopic, then a split link consisting of $\mu$ copies of $K_1$ will require at least $\mu$ elementary mutations to be transformed into a split link consisting of $\mu$ copies of $K_2$. It follows that there is no bound on $k$ in Definition \ref{mutant}. Is there a bound on $k$ in Definition \ref{goeritzrel}?

2. In particular, is it possible for two link types to require $k>1$ in Definition \ref{goeritzrel}? This would mean that $[L_1]$ and $[L_2]$ are Goeritz equivalent but do not share any of their adjusted Goeritz matrices.

3. At the other extreme: what does it mean for two link types to share all of their adjusted Goeritz matrices?

4. According to Corollaries \ref{maincor} and \ref{maincoror}, every mutation invariant $\mathcal{I}([L])$ of an (un)oriented classical link type is determined in some way by each matrix $G^{adj}(D,\sigma)$ or $G^{adj}_{or}(D,\sigma)$ associated to a diagram of $[L]$. For each such $\mathcal{I}([L])$, is it possible to describe precisely how the matrix determines the invariant?

As an instance of question 4, consider the simplest link invariant: the number of link components, $\mu(L)$. It is an elementary exercise to verify that $\mu(L)$ is invariant under mutations. (On the left-hand side of Figure \ref{figtwo}, the four indicated segments must be connected twice in pairs, once inside the tangle and once outside the tangle. In each of the nine resulting cases, it is easy to see that the number of link components is preserved by all three elementary mutations.) According to Corollary \ref{maincor} it follows that $\mu(L)$ is connected to $G^{adj}(D,\sigma)$ in some way. The connection turns out to be this: if we reduce the entries of $G^{adj}(D,\sigma)$ modulo 2, then $\mu(L)$ is the nullity of the resulting matrix over $GF(2)$, the field of two elements. (Silver and Williams gave a short proof of the special case of this nullity formula for connected link diagrams in~\cite{SW}; they described the nullity formula as ``part of topology's folklore.'')

\bibliographystyle{plain}
\bibliography{mutgoer}

\end{document}